\newtheorem{thm}{Theorem}[subsection]
\newtheorem{lem}[thm]{Lemma}
\newtheorem{cor}[thm]{Corollary}
\newtheorem{prop}[thm]{Proposition}
\newtheorem{exa}[thm]{Example}
\newtheorem{rem}[thm]{Remark}
\newtheorem{Def}[thm]{Definition}
\newcommand\la{\lambda}
\newcommand\BW{\mathrm W}
\newcommand\Bla{\boldsymbol\lambda}
\newcommand\Bmu{\boldsymbol\mu}
\newcommand\BBw{\boldsymbol{w}}
\newcommand{\isom}{\,\raise2pt\hbox{$\underrightarrow{\sim}$}\,}
\numberwithin{equation}{subsection}
\begin{document}
\setlength{\baselineskip}{4.9mm}
\setlength{\abovedisplayskip}{4.5mm}
\setlength{\belowdisplayskip}{4.5mm}
\renewcommand{\theenumi}{\roman{enumi}}
\renewcommand{\labelenumi}{(\theenumi)}
\renewcommand{\thefootnote}{\fnsymbol{footnote}}
\renewcommand{\thefootnote}{\fnsymbol{footnote}}
\allowdisplaybreaks[2]
\parindent=20pt

\pagestyle{myheadings}
\medskip
\begin{center}
{\bf Fermionic formula for double Kostka polynomials }
\end{center}
\par\bigskip
\begin{center}
Shiyuan Liu
\\
\vspace{0.5cm}
\end{center}

\title{}

\begin{abstract}
The $X=M$ conjecture asserts that the $1D$ sum and the fermionic formula coincide up to some constant power. In the case of type $A,$ both the $1D$ sum and the fermionic formula are  closely related to  Kostka polynomials. Double Kostka polynomials $K_{\Bla,\Bmu}(t),$ indexed by two double partitions $\Bla,\Bmu,$ are polynomials in $t$ introduced as a generalization of  Kostka polynomials. In the present paper, we consider $K_{\Bla,\Bmu}(t)$ in the special case
where $\Bmu=(-,\mu'').$ We formulate  a $1D$ sum  and  a fermionic formula  for  $K_{\Bla,\Bmu}(t),$ as a generalization of the case of ordinary Kostka polynomials. Then we prove an analogue of the $X=M$ conjecture.

\bigskip

\par\noindent Key words: Crystals; Double Kostka polynomials; Fermionic formulas;  Rigged configurations
\bigskip
\par\noindent 2010 AMS Subject classification: 17B37; 05E10; 82B23; 81R50; 05A30
\end{abstract}

\maketitle
\markboth{}{}
\pagestyle{myheadings}


\begin{center}
{\sc Introduction}
\end{center}
\par\medskip

\subsection{}
A certain polynomial in $t,$ called the "fermionic formula", $M$ was introduced in [HKOTY]. It can be viewed as a $t$-analogue of the multiplicity in certain representation of an affine quantum group which first appeared in Kirillov and Reshetikhin [KR2]. In [HKOTT],  a polynomial called the "$1D$ (one-dimensional) sum" $X$ was introduced by using the terminology of crystals. $1D$ sums and fermionic formulas are both defined for tensor products of KR (Kirillov-Reshetikhin) modules [KR2]. The so-called $X=M$ conjecture [HKOTT] asserts that $1D$ sum and the fermionic formula coincide up to some constant power of $t$.

\subsection{}
In the case of type $A^{(1)}_n,$ the $X=M$ conjecture is described as follows. Let $\mathfrak{g}$ be the affine algebra of type $A_n^{(1)}$ with index $I=\{0,1,\ldots,n\},$  and $\mathfrak{g}_{0}$ the corresponding finite dimensional simple Lie algebra. Let $U_q(\mathfrak{g})$  be the corresponding quantized universal enveloping algebra. Set $I_0=I-\{0\}$ and $\mathscr{H}=I_0\times\mathbb{Z}_{>0}.$ Let $\mathscr{P}_N$ be the set of partitions of size $N.$  In this paper, we consider $B(\mu)=B^{1,\mu_r}\otimes \cdots B^{1,\mu_1}$ for KR-crystals $B^{1,\mu_i},$ where $\mu=(\mu_1,\ldots,\mu_r)\in\mathscr{P}_N.$ For $b\in B(\mu),$ a non-negative integer $\mathrm{E}(b)$ attached to $b,$ called the energy, was introduced in [NY].
Let $\lambda$ be a dominant weight with respect to $\mathfrak{g}_0.$
Let $P(B(\mu),\lambda)$ be the subset of $B(\mu)$ consisting of $U_{q}(\mathfrak{g}_0)$-maximal elements with respect to the highest weight $\lambda.$ Under these notations, the $1D$ sum is given by
\begin{equation}\label{0.2.1}
X(\mu,\lambda;t)=\sum_{b\in P(B(\mu),\lambda)}t^{\mathrm{E}(b)}.
\end{equation}
 For $p,m\in\mathbb{Z}_{\geq 0},$ define
$$\left[ { p+m \atop m} \right]
_t=\frac{(t^{p+1}-1)(t^{p+2}-1)\cdots (t^{p+m}-1)}{(t-1)(t^2-1)\cdots(t^m-1)}.$$  On the other hand, the fermionic formula with respect to $\lambda$ is given by
\begin{equation}\label{0.2.2}
M(\mu,\lambda;t) =\sum_{\{m\}}t^{\mathrm{cc}(\{m\})}\prod_{(a,i)\in\mathscr{H}}\left[ { p^{(a)}_i+m_i^{(a)} \atop m_i^{(a)} } \right]_t,
\end{equation}
where $\mathrm{cc}\{m\}$ and $p_i^{(a)}$ are defined by
\begin{align*}
&\mathrm{cc}(\{m\}) =\frac{1}{2}\sum\limits_{a,b\in I_0,i,j\geq 1}(\alpha_a,\alpha_b)\min(i,j)m_{i}^{(a)}m_{j}^{(b)},\\
&p_{i}^{(a)}=\delta_{a,1}\sum_{j=1}^r\min(i,\mu_j)-\sum_{(b,j)\in\mathscr{H}}(\alpha_a,\alpha_b)\min(i,j)m_j^{(b)}.
\end{align*}
Here $(\cdot,\cdot)$ is the normalized invariant form on the weight lattice of $\mathfrak{g}_0.$
The sum $\sum_{\{m\}}$ is taken over all the $\{m^{(b)}_j\in\mathbb{Z}_{\geq 0}\mid (b,j)\in\mathscr{H}\}$ such that $p^{(a)}_i\geq 0$ for any $(a,i)\in\mathscr{H}$ and $N\overline{\Lambda}_1-\sum_{(a,i)\in\mathscr{H}}im_i^{(a)}\alpha_a=\lambda,$ where $\overline{\Lambda}_1$ is the $1$st fundamental weight with respect to $\mathfrak{g}_{0}.$

\medskip
In our case, the $X=M$ conjecture can be proved  by using the combinatorics of crystals and rigged configurations, see, e.g., [Sc2, KSS].
Rigged configurations are combinatorial objects which first appeared in the Bethe's paper on the study of the Bethe Ansatz,  and were generalized by Kerov, Kirillov and Reshetikhin [KR1,KKR]. In the view point of rigged configurations, as in [Sc2, (4.6)], we can rewrite (\ref{0.2.2}) as
\begin{equation}\label{0.2.3}
M(\mu,\lambda;t)=\sum_{(\nu,J)\in\mathrm{RC}(\mu,\lambda)}t^{\mathrm{cc}(\nu,J)},
\end{equation} where $\mathrm{RC}(\mu,\lambda)$ is the set of valid highest weight $L(\mu)$-rigged configurations of highest $\lambda$ and $\mathrm{cc}(\nu,J)$ is a non-negative integer, called the cocharge of $(\nu,J).$ See section 4 for detailed definitions.

\subsection{}
The Kostka polynomials $K_{\lambda,\mu}(t),$ indexed by  two partitions $\lambda$ and $\mu,$ play an important role in the combinatorial theory, the representation theory and the mathematical physics, see [M, III] for details.
For a tableau $T$ of partition weight, a non-negative integer $\mathrm{c}(T),$ called the charge of $T,$ see, e.g., [B], [M]. Let $\mathrm{Tab}(\lambda,\mu)$ be the set of tableaux of shape $\lambda$ and weight $\mu.$ Lascoux and Sch\"{u}tzenberger proved that the  Kostka polynomial $K_{\lambda,\mu}(t)$ can be expressed as the generating functions of $\mathrm{Tab}(\lambda,\mu)$ with respect to charge statistic ([M, III, 6.5])
\begin{equation*}
K_{\lambda,\mu}(t)=\sum_{T\in\mathrm{Tab}(\lambda,\mu)}t^{\mathrm{c}(T)}.
\end{equation*}

Let $\mu\in\mathscr{P}_N ,$ and $\lambda$  a highest weight of $B(\mu).$ It is well-known that $\lambda$ can be identified with a partition of size $N.$  In [NY], Nakayashiki and Yamada constructed a bijection $\Psi_\lambda:\mathrm{Tab}(\lambda,\mu)\isom P(B(\mu),\lambda).$ They proved that $\Psi_\lambda$ preserves the charge and energy, namely, $\mathrm{c}(T)=\mathrm{E}(\Psi_\lambda(T))$ for $T\in\mathrm{Tab}(\lambda,\mu).$ By using this bijection, they expressed the Kostka polynomial $K_{\lambda,\mu}(t)$ as
\begin{equation}\label{0.3.1}
K_{\lambda,\mu}(t)=\sum_{b\in P(B(\mu),\lambda)}t^{\mathrm{E}(b)}.
\end{equation}

In [KR1], Kirillov and Reshetikhin constructed a bijection $\Pi_\lambda:\mathrm{Tab}(\lambda,\mu)\rightarrow\mathrm{RC}(\mu,\lambda)$ and proved that
\begin{equation}
K_{\lambda,\mu}(t)=t^{n(\mu)}M(\mu,\lambda;t^{-1}),
\end{equation}
where $n(\mu)=\sum_{i=1}^{r}(i-1)\mu_i$ for $\mu=(\mu_1,\ldots,\mu_r).$ In \cite{Sc}, Schilling defined the  Kashiwara operators $e_i,f_i$ ($i\in I_0$) on the set of rigged configurations and extended the definition of cocharge to any rigged configuration.  Let $\mathrm{RC}(\mu)$ be the set of $L(\mu)$-valid rigged configurations generated from all the valid highest weight $L(\mu)$-rigged configurations by the application of the Kashiwara operators. It is known (essentially due to \cite{Sc}) that $\mathrm{RC}(\mu)$ has a crystal structure containing $\mathrm{RC}(\mu,\lambda)$ as the subset of maximal elements of highest weight $\lambda.$    In [KSS], a bijection $\Phi:B(\mu)\rightarrow \mathrm{RC}(\mu)$, called the rigged configuration bijection, was established. It can be viewed as an extension of the bijection $\Phi_\lambda=\Pi_\lambda\circ\Psi_\lambda^{-1}:P(B(\mu),\lambda)\isom \mathrm{RC}(\mu,\lambda).$  In [Sa] and [DS], it was proved that the rigged configuration bijection $\Phi$ commutes with the Kashiwara operators, which implies that $\Phi$ is a crystal isomorphism.

\medskip

Summing up the above discussion, we obtain the following commutative diagram
\begin{equation}\label{0.2.6}\begin{CD}
         @.                             B(\mu)                        @>\Phi>>             \mathrm{RC}(\mu)\\
@.                                       @AAA                                           @AAA\\
\mathrm{Tab}(\lambda,\mu) @>\Psi_\lambda>>    P(B(\mu),\lambda)   @>\Phi_\lambda>>      \mathrm{RC}(\mu,\lambda),
\end{CD}
\end{equation}
where the vertical maps are natural inclusions.

\subsection{}  Let $\mathscr{P}_{N,2}$ be the set of pair of  partitions $(\lambda',\lambda'')$ such that the sum of the sizes $\lambda'$ and  $\lambda''$ equals $N.$ We call the elements in $\mathscr{P}_{N,2}$ double partitions.  In [S1, S2], Kostka polynomials $K_{\Bla,\Bmu}(t),$ indexed by two double partitions $\Bla,\Bmu,$ were introduced as a generalization of ordinary Kostka polynomials.  as introduced in [LS]. In this paper, we call them double Kostka polynomials.
In [LS], a set $\mathrm{Tab}(\Bla,\mu)$ of tableaux of shape $\Bla$ and weight $\mu$ was introduced. For  $\Bla,\Bmu\in\mathscr{P}_{N,2}$ with $\Bmu=(-,\mu''),$ a Lascoux-Sch\"{u}tzenberger type formula for  double Kostka polynomials $K_{\Bla,\Bmu}(t)$ was given in [LS, Theorem 3.12]
\begin{equation}
K_{\Bla,\Bmu}(t)=t^{|\lambda'|}\sum_{T\in\mathrm{Tab}(\Bla,\mu'')}t^{2\mathrm{c}(T)},
\end{equation} where $\mathrm{c}(T)$ is a certain charge of $T.$

\subsection{}
In this paper, we consider the double Kostka polynomials $K_{\Bla,\Bmu}(t)$ in the special case where $\Bmu=(-,\mu'').$ As a generalization of the case of Kostka polynomials, we give the $1D$ sum expression and the fermionic formula for $K_{\Bla,\Bmu}(t),$ proving an analogue of the $X=M$ conjecture.
Our results are summarized as follows.

\par\medskip\noindent\textbf{Result I.}

In Proposition \ref{thm 3.3.3} and Theorem \ref{cor 3.3.4}, we construct a crystal $\mathrm{W}(\mu)$ and an isomorphism of crystals $\Psi:\mathrm{W}(\mu)\rightarrow B(\mu).$  For any $\lambda\in\mathscr{P}_{N},$ $\mathrm{Tab}(\lambda,\mu)$ can be naturally embedded into $\mathrm{W}(\mu)$ and $\Psi$ can be viewed as an extension of the map $\Psi_\lambda.$ Namely,  the diagram (\ref{0.2.6}) can be completed as follows
\begin{equation}\begin{CD}
\mathrm{W}(\mu)           @>\Psi>>                               B(\mu)                        @>\Phi>>             \mathrm{RC}(\mu)\\
@AAA                                          @AAA                                           @AAA\\
\mathrm{Tab}(\lambda,\mu) @>\Psi_\lambda>>    P(B(\mu),\lambda)   @>\Phi_\lambda>>      \mathrm{RC}(\mu,\lambda).
\end{CD}
\end{equation}
We also define a charge function $\mathrm{c}:\mathrm{W}(\mu)\rightarrow \mathbb{Z}_{\geq 0},$ which is an extension of the charge on $\mathrm{Tab}(\lambda,\mu).$ In Theorem \ref{Thm 3.5.2} and Corollary 3.9.3, we  prove that $\Psi$
preserves the charge function and energy function.

\par\medskip\noindent\textbf{Result II.}

Recall that for $\Bla,\Bmu\in \mathscr{P}_{N,2}$ with $\Bmu=(-,\mu''),$ the double Kostka polynomial $K_{\Bla,\Bmu}(t)$ is described by the set $\mathrm{Tab}(\Bla,\mu'')$ and a certain charge on it, as in 0.4. In the present paper, we show, for $\Bla\in\mathscr{P}_{N,2},$ $\mu\in\mathscr{P}_{N},$ that  $\mathrm{Tab}(\Bla,\mu)$ is naturally embedded in $\mathrm{W}(\mu).$  We modify the definition of the charge on $\mathrm{Tab}(\Bla,\mu)$ in [LS] such that it coincides with the restriction of the charge on $\mathrm{W}(\mu).$ In Proposition 3.8.1, we construct a suitable subset $P(B(\mu),\Bla)$ of $B(\mu)$ and prove that the restriction of $\Psi$ on $\mathrm{Tab}(\Bla,\mu)$ gives a bijection $\mathrm{Tab}(\Bla,\mu)\isom P(B(\mu),\Bla).$ We define a set $\mathrm{RC}(\mu,\Bla)$ as the image of $P(B(\mu),\Bla)$ under the isomorphism $\Phi.$  In Corollary 4.9.3, we determine $\mathrm{RC}(\mu,\Bla)$ explicitly. Summing up the above discussion, we obtain the following commutative diagram
\begin{equation}\label{0.4.2}\begin{CD}
\mathrm{W}(\mu)           @>\Psi>>                               B(\mu)                        @>\Phi>>             \mathrm{RC}(\mu)\\
@AAA                                          @AAA                                           @AAA\\
\mathrm{Tab}(\Bla,\mu) @>\Psi_{\Bla}>>    P(B(\mu),\Bla)   @>\Phi_{\Bla}>>      \mathrm{RC}(\mu,\Bla),
\end{CD}
\end{equation}
 where the vertical maps are inclusion maps, and $\Psi_{\Bla}$ (resp. $\Phi_{\Bla}$) is the restriction of $\Psi$ (resp. $\Phi$) which gives a bijection.

 We remark that in contrast to the case in 0.2, the set $P(B(\mu),\Bla)$ no longer consists of maximal elements and $\mathrm{\mu,\Bla}$ does not consist of restricted rigged configurations in the sense of Schilling [Sc1].

\par\medskip\noindent\textbf{Result III.}

Let $\Bla,\Bmu\in\mathscr{P}_{N,2}$ as in 0.3. By using bijection $\Psi_{\Bla}$ given in (\ref{0.4.2}) and the fact that $\Psi_{\Bla}$ preserves the charge and the energy function, we prove the $1D$ sum expression for  double Kostka polynomials
\begin{equation}
K_{\Bla,\Bmu}(t)=t^{|\lambda'|}\sum_{b\in P(B(\mu''),\Bla)}t^{2\mathrm{E}(b)},
\end{equation} which gives an analogue of (\ref{0.3.1}). Hence, $K_{\Bla,\Bmu}(t)$ can be viewed as the generating function over $P(B(\mu''),\Bla)$ with respect to the twice energy function.
In Definition 4.11.2, we define a fermionic formula $M(\Bla,\mu;t).$ In Theorem 4.11.3, we prove that
\begin{equation}
M(\Bla,\mu'';t^2)=t^{2n(\mu'')+|\lambda'|}K_{\Bla,\Bmu}(t^{-1}).
\end{equation} Hence, $M(\Bla,\mu'';t^2)$ can be viewed as the fermionic formula for  double Kostka polynomials $K_{\Bla,\Bmu}(t).$  In Remark 4.11.4, we define a $1D$ sum
\begin{equation}
X(\mu,\Bla;t)=\sum_{b\in P(B(\mu),\Bla)}t^{\mathrm{E}(b)}.
\end{equation} By summing up the above discussion, we obtain
\begin{equation}
X(\mu,\Bla;t)=t^{n(\mu)}M(\mu,\Bla;t^{-1}),
\end{equation} which give an analogue of the $X=M$ conjecture.

\bigskip

\par\noindent \textbf{Acknowledgements}

\medskip

The author is indebted to Professor Toshiaki Shoji, who gave a lot of suggestions for completing the paper.   He would like to thank Professor Masato Okado  for his valuable advices during the preparation of this paper.  Professor Okado gave a series of lectures on $X = M$ conjecture at Tongji University in September 2014,  which is the origin of this research.  Thanks are also due to Yanmin Pu  for the careful reading of the draft of the paper.

\par\smallskip
\section{Combinatorial preliminaries}
 In this section, we introduce some combinatorial notions and tools used in the present paper.
 \subsection{}

Let $\lambda=(\lambda_1,\lambda_2,\ldots,\lambda_k)$ be a partition, namely $\lambda_1\geq\lambda_2\geq\cdots\geq\lambda_k>0,$ $\lambda_i\in\mathbb{Z}.$  Let $|\lambda|=\sum^{k}_{i=1}\lambda_i$ be the size of $\lambda$ and $l(\lambda)=k$ be its length. We denote by $\mathscr{P}_N$  the set of partitions of size $N$. We usually identify $\lambda$ with its Young diagram, which is a collection of boxes, arranged in left-justified row, with $\lambda_i$ boxes in the $i$-th row.

Let $\lambda=(\lambda_1,\ldots,\lambda_s)$ and $\rho=(\rho_1,\ldots,\rho_t)$ be two partitions such that $s\geq t$ and $\lambda_i\geq \rho_i$ ($1\leq i\leq t$). In this case, we write $\lambda\supset\rho$ and define the skew shape $\lambda-\rho$ to be the diagram obtained by removing the Young diagram $\rho$ from the Young diagram $\lambda.$

A tableau is a filling of the boxes of a skew shape with positive integers, weakly increasing in rows and strictly increasing in columns. Let $T$ be a tableau of shape $\lambda-\rho.$ Set $|T|=|\lambda|-|\rho|.$ We denote by $L(T)$ the set of letters appearing in $T.$

  We write words as a sequence of letters (positive integers, with our conventions). For a word $w=a_{m}a_{m-1}\cdots a_{1}$ of length $m,$ let
$\mu_i$ be the number of occurrences of the letter $i$ in the word $w.$ Define the weight of $w$ to be
$$\mathrm{wt}(w)=(\mu _{1},\mu
_{2},\ldots ).$$ $w$ is called a standard word if $\mathrm{wt}(w)=(1^m).$ Let $w_1=a_n\cdots a_1$ and $w_2=b_m\cdots b_1$ be two words, write $$w_1*w_2=a_n\cdots a_1b_m\cdots b_1.$$ It is clear that $\mathrm{wt}(w_1*w_2)=\mathrm{wt}(w_1)+\mathrm{wt}(w_2).$

To each tableau $T$, we associate a word $w(T)=a_{|T|}a_{|T|-1}\cdots a_{1}$, called the word of $T$, where the sequence $a_{1}, a_{2}, \ldots, a_{|T|}$ is derived by reading the letters in $%
T$ from right to left in successive rows, starting with the top row. We say a word $w$ is compatible with $\lambda-\rho $, if there exists some tableau $T$ of shape $\lambda-\rho$ such that $w(T)=w.$ Note that such a tableau is unique if $\lambda-\rho$ is fixed. A word $w=a_m\cdots a_1$ is called a row if $w$ is compatible with the partition $(m)$, namely
$a_m\leq a_{m-1}\leq\cdots \leq a_1.$ Similarly, a word $w=b_m\cdots b_1a_m\cdots a_1$ of even letters is called a double-row if it is compatible with the partition $(m^2)$, namely
$a_m\leq a_{m-1}\leq\cdots \leq a_1$, $b_m\leq b_{m-1}\leq\cdots \leq b_1$ and $a_i<b_i$ for each $1\leq i\leq m.$ Define the weight of a tableau to be the weight of its word.

We say a tableau $T$ is standard if $w(T)$ is a standard word. For a partition $\lambda$, let $\mathrm{Tab}(\lambda)$ denote the set of tableaux of shape $\lambda.$ Denote by $\mathrm{ST}(\lambda)$ the subset of $\mathrm{Tab}(\lambda)$ consisting of standard tableaux.

\subsection{} Following [B, 2.3], we consider the column bumping operation on tableaux. For a letter $a$ and a tableau $T$ of partition shape, let  $a\rightarrow T$  denote the column bumping procedure. The resulting tableau is denoted by $[a\rightarrow T].$  We write $R(a\rightarrow T)=r$ if the new box appears in the $r$-th row of $[a\rightarrow T].$

Let $w=a_{m}a_{m-1}\cdots a_{1}$ be a word and $T$   a tableau of partition shape. Set
$$[w\rightarrow T]=[a_m\rightarrow [a_{m-1}\rightarrow \cdots \rightarrow[a_1\rightarrow T]\cdots]].$$

\begin{lem}\label{lem 1.2.1}
Let $T$ be a tableau of partition shape, and let $u$, $v$ be two
letters. Set $R_{u}=R(u\rightarrow T)$ and $R_{v}=R(v%
\rightarrow [u\rightarrow T])$. If $v\leq u$, then $R_{v}\leq R_{u}$.
\end{lem}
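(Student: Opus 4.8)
The plan is to mimic Fulton's proof of the row bumping lemma in [B], with the roles of rows and columns interchanged. Recall that column-inserting a letter $a$ into $T$ proceeds one column at a time: in the current column one replaces the topmost entry that is $\ge a$ (this choice, forced by strict increase down columns, is exactly what makes $R_v\le R_u$ rather than the reverse), and the displaced entry is carried into the next column to the right; the process stops in the first column all of whose entries are $<a$, where $a$ is appended at the bottom and the new box is created. I record the \emph{bumping route} as the sequence of boxes, one in each successive column, at which a replacement or the final placement occurs; its last box is the new box, lying in row $R(a\to T)$.

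First I would prove a structural fact about a single route: moving one column to the right never increases its row index. Indeed, if $y$ is displaced from box $(r,j)$ and carried into column $j+1$, then either column $j+1$ has a box in row $r$, whose entry is $\ge T(r,j)=y$ since rows increase weakly, so the next replacement occurs in a row $\le r$; or, because the shape is a partition, column $j+1$ does not reach row $r$, and $y$ is appended in a row $\le r$. Thus each route is weakly decreasing in its row index, and in particular $R(a\to T)$ is at most the row of the first box of the route.

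The main step is an induction on the number of columns of $T$, comparing the two routes in the first column. Suppose first that $u$ bumps within the first column (the append case is treated below), and let $c_1<\cdots<c_k$ be that column. Inserting $u$ replaces the topmost entry $c_i\ge u$, in some row $i$, and carries $a:=c_i$ into the tableau $T'$ consisting of columns $2,3,\dots$; the column now reads $c_1,\dots,c_{i-1},u,c_{i+1},\dots$. Since $v\le u$ and $u$ occupies row $i$, inserting $v$ replaces the topmost entry $\ge v$ in some row $i'\le i$, carrying a letter $b$ into $T'$. The crux is the inequality $b\le a$: if $i'<i$ then $b=c_{i'}<c_i=a$, and if $i'=i$ then the entry now in row $i$ is $u$, so $b=u\le c_i=a$. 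As the new boxes of the two full insertions are exactly those produced by column-inserting first $a$ and then $b$ into $T'$, and $b\le a$, the inductive hypothesis applied to the smaller tableau $T'$ gives $R_v\le R_u$.

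It remains to treat the terminal cases, which is where I expect the only real friction. If $u$ is appended already in the first column, then $R_u=k+1$; but $v\le u$ still meets an entry $\ge v$ there (namely $u$), so $v$ bumps in some row $i'\le k+1$ and continues, whence $R_v\le i'\le R_u$ by the route-monotonicity above. The single-column base case is checked by hand. The two subtleties that make the induction close are precisely this carried-letter inequality $b\le a$ and the fact that the two routes may have different lengths (so that one must combine the column-by-column comparison with the monotonicity of routes, rather than merely inspect a common terminal column). An alternative I could pursue is to transpose $T$ and quote the dual row-bumping lemma, but then one must carefully track the exchange of strict and weak inequalities, so the direct induction above seems cleaner.
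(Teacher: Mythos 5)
Your proof is correct and takes essentially the same approach as the paper's: induction on the number of columns of $T$, comparison of the two bumps in the first column via the carried-letter inequality (your $b\le a$ is the paper's $u_{1}\geq u\geq v_{1}$), reduction to the tableau $T'$ obtained by deleting the first column, and a separate treatment of the case where $u$ is appended to the first column. The only cosmetic difference is that you isolate and prove the route-monotonicity fact explicitly, which the paper uses implicitly when it asserts $R_{v}\leq n+1=R_{u}$ in the append case.
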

\begin{proof}
We use induction on the number of the columns of $T$. If $T$ has no column, namely, $T=\emptyset,$ it is easy to check $R_{v}=R_{u}=1$.

Assume that $T$ has $r\geq 1$ columns. If $u$ bumps no letter in the first column of $T$, then $R_{u}=n+1$, where $n$ is the length of the first
column.
Note that $v$ must bump some letter in the first column of  $[u\rightarrow T]$ since $v\leq u$, hence $R_{v}\leq n+1=R_{u}$.
If $u$ bumps a letter $u_{1}$ in the first column of $T$ and $v$ bumps $v_{1}$
in the first column of $[u\rightarrow T]$, then $u_{1}\geq u\geq v_{1}\geq v$.
Let $T^{\prime }$ be the tableau obtained by omitting the first column of $T$. Set $%
R_{u_{1}}=R(u_{1}\rightarrow T^{\prime })$ and $R_{v_{1}}=R(v_{1}\rightarrow
[u_{1}\rightarrow T^{\prime }])$, then $R_u=R_{u_{1}}$ and $R_v=R_{v_{1}}$. We have $R_{u_{1}}\geq R_{v_{1}}$ by induction. Thus we obtain
$R_u\geq R_v.$
\end{proof}

\begin{cor}\label{cor 1.2.2} Let $w=a_m\cdots a_1$ be a word and $T$ a tableau of partition shape. Set $R_k=R(a_k\rightarrow[a_{k-1}\cdots a_1\rightarrow T])$. If $w$ is a row, then $R_m\cdots R_1$ is a row.
\end{cor}
\begin{proof} This follows directly from Lemma \ref{lem 1.2.1}.
\end{proof}

\begin{lem}\label{lem 1.2.3}
Let $w=x_{t}x_{t-1}\cdots x_{1}z$ be a word and $T$ a tableau of partition shape. Set
\begin{align*}
&R_{k} =R(x_{k}\rightarrow [x_{k-1}x_{k-2}\cdots x_{1}\rightarrow T])\\
&R_{k}^{\prime } =R(x_{k}\rightarrow [x_{k-1}x_{k-2}\cdots
x_{1}z\rightarrow T]).
\end{align*}
Assume that $z>x$ for any $x\in L(T)\cup \{x_{1},x_{2},\ldots ,x_{t}\}$,
then $R_{k}\geq R_{k}^{\prime }$.
\end{lem}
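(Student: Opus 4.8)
The plan is to track exactly how the single extra insertion of the large letter $z$ perturbs the subsequent column bumpings. The starting observation is that, since $z$ is strictly larger than every entry of $T$ and every $x_i$, inserting $z$ first into $T$ bumps nothing in the first column and simply appends a new box containing $z$ at the foot of column $1$; thus $[z\to T]$ is $T$ with one extra box $z$ at the bottom of its first column. I would then prove, by induction on $k$, the structural invariant that $[x_k\cdots x_1 z\to T]$ is obtained from $U_k:=[x_k\cdots x_1\to T]$ by adjoining $z$ as a single new box at the foot of some column $c_k$, where $z$ stays strictly larger than all other entries and the indices satisfy $c_0\le c_1\le\cdots$. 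The row inequality will then be read off from the inductive step.

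For the inductive step, write $c:=c_{k-1}$, $V:=U_{k-1}$ and $V'=V\sqcup\{z\text{ at the foot of column }c\}$, and insert $x_k$ into both, comparing the bumping paths. Because a column bumping path moves strictly rightward and $V,V'$ agree in every column strictly left of $c$, the two insertions coincide until the carried letter first reaches column $c$ (if it does). Three cases occur: the path may terminate in a column left of $c$, so the insertions are identical and $z$ is untouched, giving $c_k=c$; the carried letter $y$ may bump a genuine entry of column $c$, in which case the top-most entry $\ge y$ is the same in $V$ and $V'$ (the copy of $z$ lies below all originals), so the paths again coincide and $c_k=c$; or $y$ would be appended at the foot of column $c$ in $V$, in which case in $V'$ it instead bumps $z$, the unique entry $\ge y$ available there, whereupon $z$, being the global maximum, cascades no further and is appended at the foot of column $c+1$, giving $c_k=c+1$. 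In every case the invariant is restored (one checks $z$ remains the strict maximum, since all relocated letters are $<z$).

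The inequality $R_k\ge R_k'$ drops out of this trichotomy. Let $h_j$ denote the height of column $j$ in $U_{k-1}$. In the first two cases the new box produced by inserting $x_k$ occupies the same cell with and without $z$, so $R_k=R_k'$. In the third case the new box without $z$ sits at the foot of column $c$, in row $h_c+1$, whereas with $z$ present the new box is the cell finally occupied by $z$, at the foot of column $c+1$, in row $h_{c+1}+1$; since the column heights of a tableau of partition shape are weakly decreasing we have $h_{c+1}\le h_c$, hence $R_k'=h_{c+1}+1\le h_c+1=R_k$. Running the induction from the base case $k=0$ yields $R_k\ge R_k'$ for all $k$.

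I expect the main obstacle to be the bookkeeping in the inductive step: one must argue cleanly that the two bumping paths are literally identical up to the column currently housing $z$, and that the only possible divergence is the single event in which $z$ is bumped one column to the right. The facts that make this go through are that $z$ is the strict global maximum and always sits at the very foot of its column, so it can be disturbed only by a letter that would otherwise create a new box there, and that once bumped it reattaches at the bottom of the next column without triggering any further bumping. Monotonicity of the column heights then converts this ``one step to the right'' into the desired inequality.
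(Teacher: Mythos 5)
Your proposal is correct and follows essentially the same route as the paper: the paper's proof also rests on the observation that $[x_{k}\cdots x_{1}z\rightarrow T]$ differs from $[x_{k}\cdots x_{1}\rightarrow T]$ only by the single box containing $z$, and then compares the two insertion paths, which diverge only when $z$ is bumped, in which case the box without $z$ appears exactly where $z$ sat while $z$ reattaches at the foot of the next (weakly shorter) column. The only difference is one of detail: what the paper dismisses as ``easy to see'' (the structural invariant) and ``obviously'' (the final inequality), you prove carefully by induction with the explicit trichotomy, which is a welcome tightening rather than a new method.
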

\begin{proof}
Set $T_{k}=[x_{k}\rightarrow [x_{k-1}x_{k-2}\cdots x_{1}\rightarrow T]]$ and $%
T_{k}^{\prime }=[x_{k}\rightarrow [x_{k-1}x_{k-2}\cdots x_{1}z\rightarrow T]]$%
. Since $z>x$ for any $x\in L(T)\cup \{x_{1},x_{2},\ldots ,x_{t}\}$ by assumptions, it is easy to see that $T_k$ is just the tableau obtained by removing the box with letter $z$ from $T'_k.$

Consider the procedure
\[
x_{k}\rightarrow [x_{k-1}x_{k-2}\cdots x_{1}z\rightarrow T].
\]
If $z$ is
bumped by some letter, then $R_{k}=r$ provided the letter $z$ appears in the $r$%
-th row of $T_{k-1}^{\prime }$, and obviously $R_{k}^{\prime }\leq r=R_{k}$.
If $z$ is not bumped, then $R_{k}=R_{k}^{\prime }$.
\end{proof}

We denote by $\infty $ the symbolic letter which satisfies the relation that $\infty >a$ for any $%
a\in\mathbb{Z}_{>0}$. Then the previous procedure $x\rightarrow T$ makes sense even for $x=\infty.$ We can rewrite Lemma \ref{lem 1.2.3} as follows
\par\smallskip\noindent
\begin{enumerate}
\item[(1.2.1)]
 Let $w=x_{t}x_{t-1}\cdots x_{1}$ be a word and
$T$ a tableau of partition shape. Set
\begin{align*}
&R_{k} =R(x_{k}\rightarrow
[x_{k-1}x_{k-2}\cdots x_{1}\rightarrow T])\\
&R_{k}^{\infty
} =R(x_{k}\rightarrow [x_{k-1}x_{k-2}\cdots x_{1}\infty \rightarrow T]).
\end{align*}
Then $R_{k}\geq R_{k}^{\infty }$.
\end{enumerate}
\begin{prop}\label{prop 1.2.4}
Let $w=y_{t}y_{t-1}\cdots y_{1}x_{t}x_{t-1}\cdots x_{1}$ be a double-row and
$T$  a tableau of partition shape. For
$1\leq k\leq t$, set
\begin{align*}
&R_{k}^{x} =R(x_{k}%
\rightarrow [x_{k-1}\cdots x_{1}\rightarrow T])\\
&R_{k}^{y} =R(y_{k}%
\rightarrow [y_{k-1}\cdots y_{1}x_{t}x_{t-1}\cdots x_{1}\rightarrow T]).
\end{align*} Then $R_{t}^{y}R_{t-1}^{y}\cdots
R_{1}^{y}R_{t}^{x}R_{t-1}^{x}\cdots R_{1}^{x}$ is a double-row.
\end{prop}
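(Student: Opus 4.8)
The plan is to read off from the definition of a double-row exactly what must be checked about the word $R_t^yR_{t-1}^y\cdots R_1^yR_t^xR_{t-1}^x\cdots R_1^x$, and then to isolate the one genuinely nontrivial point. Writing $a_i=R_i^x$ and $b_i=R_i^y$, the assertion that this word is a double-row (compatible with $(t^2)$) amounts to three statements: $R_t^x\leq\cdots\leq R_1^x$, $R_t^y\leq\cdots\leq R_1^y$, and $R_i^x<R_i^y$ for every $i$. The first two are immediate from Corollary \ref{cor 1.2.2}: since $w$ is a double-row, the factor $x_t\cdots x_1$ is a row, so applying Corollary \ref{cor 1.2.2} with the tableau $T$ shows that $R_t^x\cdots R_1^x$ is a row; and the factor $y_t\cdots y_1$ is a row, so applying Corollary \ref{cor 1.2.2} with the tableau $[x_t\cdots x_1\rightarrow T]$ shows that $R_t^y\cdots R_1^y$ is a row. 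Thus the whole content of the proposition is the column-strictness $R_i^x<R_i^y$.

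The source of the strict inequality is the fact, dual to Lemma \ref{lem 1.2.1}, that if $u<v$ then the box created by $v\rightarrow[u\rightarrow T]$ lies strictly below the box created by $u\rightarrow T$; this is the column-bumping behaviour recorded in [B] and is provable by the same induction on columns used for Lemma \ref{lem 1.2.1}. Applied with $u=x_i<y_i=v$ and with $T$ replaced by $[x_{i-1}\cdots x_1\rightarrow T]$, it would give the desired strict drop if $y_i$ were inserted immediately after $x_i$. The difficulty is that in the actual word the letters inserted between $x_i$ and $y_i$, namely $x_{i+1},\ldots,x_t$ and $y_1,\ldots,y_{i-1}$, are a mixture of letters smaller than $y_i$ and letters larger than $y_i$; moreover inserting larger letters first can move the landing row of a later smaller letter upward rather than downward, so no naive monotonicity lets one pass from the two-letter comparison to the full word. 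Hence a global argument is needed.

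Accordingly I would prove $R_i^x<R_i^y$ by induction on the number of columns of $T$, following the pattern of the proofs of Lemmas \ref{lem 1.2.1} and \ref{lem 1.2.3}. In the base case $T=\emptyset$ the letters $x_1,\ldots,x_t$ fill the first row and $y_1,\ldots,y_t$ the second row of $[w\rightarrow\emptyset]$, so $R_i^x=1<2=R_i^y$ for all $i$. For the inductive step let $T'$ be $T$ with its first column removed. As in Lemma \ref{lem 1.2.1}, each letter of $w$, when inserted, either is appended to the current first column, creating a box there, or bumps a letter out of the first column which is then inserted into $T'$; in the latter case the row of the resulting new box equals the row of the new box produced by inserting the bumped letter into $T'$. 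The key claim is that the letters bumped out of the first column, taken in order of insertion, again form a double-row $w'$, and that the letters appended to the first column occupy bottom rows consistent with the double-row pattern of the $R$'s. Granting this, the induction hypothesis applied to $w'\rightarrow T'$ gives that the bumped contributions already form a double-row, and reinstating the appended first-column boxes preserves this, completing the step.

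The main obstacle is precisely this key claim in the inductive step. Concretely one must show that the first-column dynamics preserve column-strictness, i.e.\ that the letter bumped by $y_i$ is strictly larger than the letter bumped by $x_i$, so that the bumped word is genuinely a double-row; and one must handle the mixed cases in which exactly one of $x_i,y_i$ is appended to the first column while the other is bumped, checking that $R_i^x<R_i^y$ survives in each such case. This requires a careful case analysis of which letters append and which bump, organised by the entries of the first column, and it uses the hypothesis $x_i<y_i$ together with the monotonicity statements of Lemma \ref{lem 1.2.1}, Lemma \ref{lem 1.2.3} and (1.2.1) to control the bumped letters and their insertion rows.
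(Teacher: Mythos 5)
Your opening reduction is correct and is in fact a cleaner framing than the paper's: Corollary \ref{cor 1.2.2} applied to $T$ and then to $[x_t\cdots x_1\rightarrow T]$ does show that $R_t^x\cdots R_1^x$ and $R_t^y\cdots R_1^y$ are rows, so the whole content is the strict inequality $R_i^x<R_i^y$; your base case $T=\emptyset$ is also right. The genuine gap is in the inductive step, and it is not merely an unfinished case analysis: the ``key claim'' your induction rests on is false. The letters bumped out of the first column need not form a double-row, because the bumped $x$-word and the bumped $y$-word can have different lengths. Take $T$ to be the one-column tableau with entries $1,10$ and $w=y_2y_1x_2x_1=4\,5\,2\,3$. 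Then $x_1=3$ bumps $10$, $x_2=2$ bumps $3$, but $y_1=5$ is \emph{appended} to the first column while $y_2=4$ bumps $5$: the bumped letters are $10,3$ from the $x$'s and only $5$ from the $y$'s, so there is no double-row $w'$ to which the induction hypothesis could be applied. Restricting to the fully-bumped pairs does not repair this, since the unpaired bumped letter $10$ is inserted into $T'$ before them and changes all later landing rows; so the plain statement of the proposition for $T'$ is never applicable. Your last paragraph flags the mixed append/bump cases as ``the main obstacle,'' but within your setup they cannot be handled by local checking: the setup itself presupposes a pairing of bumped letters that the mixed cases destroy.

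This mismatch is exactly what the paper's proof is engineered to avoid, and its device is worth internalizing. The paper inducts on $|T|$ rather than on the number of columns: in the main case $x_1\le z(T)$ it removes the leftmost box $z$ of the bottom row of $T$ to form $T'$, and \emph{augments} the word to $\widetilde w=y_t\cdots y_1\,\infty\,x_t\cdots x_1\,z$. Adding the pair $(z,\infty)$ keeps $\widetilde w$ a genuine double-row of length $2(t+1)$, so the induction hypothesis applies verbatim to $(\widetilde w,T')$; then $[z\rightarrow T']=T$ gives $\widetilde R_k^x=R_k^x$, and Lemma \ref{lem 1.2.3} gives $R_k^y\ge\widetilde R_k^y$, which together transfer the double-row property back to $(w,T)$. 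The remaining case $x_1>z(T)$ (precisely your ``append'' case) is reduced to the main case by inserting the pair $(x_1,y_1)$ first and noting $R_1^y=R_1^x+1$. In short, the missing idea is to enlarge the word so that the $x$/$y$ pairing stays intact, instead of passing to the bumped letters, where the pairing is lost.
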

\begin{proof}
Let $z(T)$ denote the left most letter of the last row of $T$. We consider the case $x_1\leq z(T)$ and $x_1>z(T)$ separately. \

\par\smallskip\noindent \textbf{Case 1. $x_{1}\leq z(T).$}

We use the induction on $|T|$. For $|T|=1$,
by a direct computation, $R_{k}^{x}=1$ and $R_{k}^{y}=2$ for $1\leq k\leq t$. Hence $%
R_{t}^{y}R_{t-1}^{y}\cdots R_{1}^{y}R_{t}^{x}R_{t-1}^{x}\cdots R_{1}^{x}$ is
a double-row.

Let $T$ be a tableau of partition shape with $|T|=n\geq 1$ such that $x_1\leq z(T).$ By induction hypothesis, we may assume that the assertion holds for any tableau $T'$ of partition shape  with $|T'|<n$ satisfying the condition in Case 1. We write the last row of $T$ as
$$\begin{array}{llll}
z & a_1 & \cdots & a_s
\end{array}$$where $z=z(T).$
Let $T^{\prime }$ be the tableau of partition shape obtained by replacing the
last row of $T$ by%
\[
\begin{array}{lll}
a_{1} & ... & a_{s} \cr
\end{array}.
\]

For a double-row $w=y_{t}y_{t-1}\cdots y_{1}x_{t}x_{t-1}\cdots x_{1}$,
let $\widetilde{w}$ be a double-row defined by $\widetilde{w}=y_{t}y_{t-1}\cdots y_{1}\infty x_{t}x_{t-1}\cdots x_{1}z$%
. For $1\leq k\leq t$, set
\begin{align*}
&\widetilde{R}_{k}^{x} =R(x_{k}\rightarrow [x_{k-1}\cdots
x_{1}z\rightarrow T^{\prime }])\\
&\widetilde{R}_{k}^{y}=R(y_{k}\rightarrow [y_{k-1}\cdots y_{1}\infty x_{t}x_{t-1}\cdots
x_{1}z\rightarrow T])
\end{align*} for $1\leq k\leq t$. Then $\widetilde{R}_{k}^{x}%
=R_{k}^{x}$ since $[z\rightarrow T^{\prime }]=T$.
We see that $\widetilde{R}_{t}^{y}\widetilde{%
R}_{t-1}^{y}\cdots \widetilde{R}_{1}^{y}\widetilde{R}_{t}^{x}\widetilde{%
R}_{t-1}^{x}\cdots \widetilde{R}_{1}^{x}$ is a double-row since $|
T^{\prime }| =n-1$.
By Lemma \ref{lem 1.2.3}, we have $%
R_{k}^{y}\geq \widetilde{R}_{k}^{y}$, hence $$%
R_{t}^{y}R_{t-1}^{y}\cdots R_{1}^{y}R_{t}^{x}R_{t-1}^{x}\cdots
R_{1}^{x}=R_{t}^{y}R_{t-1}^{y}\cdots R_{1}^{y}\widetilde{R}_{t}^{x}%
\widetilde{R}_{t-1}^{x}\cdots \widetilde{R}_{1}^{x}$$ is a double-row.

\par\smallskip\noindent\textbf{Case 2. $x_{1}>z(T).$}

Let $w=y_{t}y_{t-1}\cdots y_{1}x_{t}x_{t-1}\cdots x_{1}$ be a double-row and $T$  a tableau of partition shape such that $x_{1}>z(T).$ Set
\begin{align*}\widetilde{R}_{1}^{x}%
 &=R(x_{1}\rightarrow T)\\
\widetilde{R}_{1}^{y} &=R(y_{1}\rightarrow
(x_{1}\rightarrow T))
\end{align*}
and set
\begin{align*}\widetilde{R}_{k}^{x} &=R(x_{k}\rightarrow
(x_{k-1}\cdots x_{2}y_{1}x_{1}\rightarrow T)),\\
\widetilde{R}_{k}^{y}%
 &=R(y_{k}\rightarrow (y_{k-1}\cdots y_{2}x_{t}\cdots
x_{2}y_{1}x_{1}\rightarrow T)),
\end{align*}
for $2\leq k\leq t.$ It is easy to see  $%
\widetilde{R}_{k}^{x}=R_{k}^{x}$ and $\widetilde{R}_{k}^{y}=R_{k}^{y}$ for $%
1\leq k\leq t$. Since $x_{2}\leq x_{1}<y_{1}$, the double-row $\widetilde{w}%
=y_{t}y_{t-1}\cdots y_{2}x_{t}x_{t-1}\cdots x_{2}$ and the
tableau $T'=[y_{1}x_{1}\rightarrow T]$ satisfy the condition in Case 1.
Hence
\[
R_{t}^{y}R_{t-1}^{y}\cdots R_{2}^{y}R_{t}^{x}R_{t-1}^{x}\cdots
R_{2}^{x}=\widetilde{R}_{t}^{y}\widetilde{R}_{t-1}^{y}\cdots \widetilde{%
R}_{2}^{y}\widetilde{R}_{t}^{x}\widetilde{R}_{t-1}^{x}\cdots \widetilde{%
R}_{2}^{x}
\]
 is a double-row. Noticing that $R_{1}^{y}=R_{1}^{x}+1$, we conclude that $$%
R_{t}^{y}R_{t-1}^{y}\cdots R_{1}^{y}R_{t}^{x}R_{t-1}^{x}%
\cdots R_{1}^{x}$$ is
a double-row.
\end{proof}

\subsection{} Following [B, Chapter 2], we introduce the Knuth's relation $\sim_{K}$ which is the equivalent relation on words, generated by so-called elementary transformations given as follows; for letters $x,y,z$ are letters and words $u,v$,
\begin{eqnarray*}
uxzyv &\sim_K &uzxyv\text{ for \ }x\leq y<z, \\
uyzxv &\sim_K &uyxzv\text{ for \ }x<y\leq z.
\end{eqnarray*}

Let $w$ be a word such that $\mathrm{wt}(w)$ is a partition. Following [Bu, 2.4], we associate a non-negative integer $\mathrm{c}(w),$ called the charge of $w$. Let $T$ be a tableau of partition weight, define its charge $\mathrm{c%
}(T)$ as the charge of the word $w(T)$.

\begin{lem}[{[B, Corollary 2.4.38]}] \label{lem 1.3.1}
If $w$ and $w^{\prime }$ are words whose weights are partitions then

\[
w\sim _{K}w^{\prime }\Longrightarrow \mathrm{c}(w)=\mathrm{c}(w^{\prime })
\]
\end{lem}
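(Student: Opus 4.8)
The plan is to reduce the statement to a single elementary Knuth transformation and then track the effect of such a move through the two layers in the definition of charge. First observe that each elementary transformation merely permutes the letters of a word, so it preserves the weight; consequently, if $w\sim_K w'$ then every word occurring in a chain of elementary transformations joining $w$ to $w'$ has the same weight as $w$, which is a partition, so $\mathrm{c}$ is defined at each stage. Thus it suffices to prove $\mathrm{c}(w)=\mathrm{c}(w')$ in the case where $w'$ is obtained from $w$ by one elementary transformation, that is, $w=uxzyv$, $w'=uzxyv$ with $x\le y<z$, or $w=uyzxv$, $w'=uyxzv$ with $x<y\le z$.

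Recall that charge is defined in two stages: on a standard word it is the index sum, where the index of each letter is determined inductively by comparing the positions of consecutive letters, while on a word of partition weight it is the sum of the charges of the standard subwords produced by the cyclic right-to-left extraction procedure of [Bu, 2.4]. I would first settle the standard case. If $w$ is standard, the three letters involved in the transformation are pairwise distinct and only their mutual relative positions change. A direct inspection of the index-assignment rule shows that the indices of all letters outside $\{x,y,z\}$ are unaffected and that the indices within $\{x,y,z\}$ redistribute with zero net change; this is a finite case check, which I would organise according to whether $z=y+1$ or $z>y+1$ and whether the values $x,y,z$ are consecutive.

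For a general word of partition weight, the difficulty is to understand how the standard subword extraction interacts with the move. The key claim is that one elementary Knuth transformation of $w$ induces on each extracted standard subword either no change or a single elementary Knuth transformation of standard words, so that the charge of every standard subword is preserved by the already-established standard case, and hence so is the total charge. To prove this claim I would induct on the length of $w$, peeling off the last standard subword $s$ extracted by the cyclic scan: one checks that the positions swapped by the Knuth move either both lie outside $s$, in which case the move descends to $w\setminus s$ and induction applies, or interact with $s$ in a way that, by the inequalities $x\le y<z$ (resp. $x<y\le z$) defining the transformation, leaves the scan's selections unchanged up to an elementary move on $s$ itself.

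I expect the main obstacle to be exactly this last compatibility: showing that the cyclic selection rule is insensitive to the local reordering allowed by a Knuth relation when the swapped letters are distributed among different standard subwords. The delicate point is that a single swap in $w$ can shift which occurrence of a repeated letter is chosen during the scan, so the case analysis must verify that any such reselection is compensated by a matching reselection in an adjacent subword, keeping the extracted standard words Knuth-equivalent to those of $w$. An alternative route that sidesteps part of this bookkeeping is the Lascoux--Sch\"{u}tzenberger cyclage characterisation of charge, for which compatibility with Knuth equivalence is built into the recursive definition; but since the statement is quoted from [B, Corollary 2.4.38], the cleanest presentation is simply to follow the extraction argument of that reference.
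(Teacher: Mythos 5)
The paper does not prove this lemma at all: it is imported verbatim from Butler [B, Corollary 2.4.38] (the result goes back to Lascoux and Sch\"{u}tzenberger), and the charge $\mathrm{c}$ itself is only defined in the paper by reference to [B, 2.4]. So your argument has to stand on its own, and at present it does not. Two of your steps are fine: the reduction to a single elementary transformation is correct, and the standard case is correct --- in fact simpler than you make it, since in a standard word the two letters exchanged by either type of move satisfy $z\geq x+2$, so the relative position of no pair of consecutive values $r,r+1$ changes and every index is literally unchanged; no case analysis on $z=y+1$ versus $z>y+1$ is needed.

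The genuine gap is your ``key claim'' that an elementary Knuth move on $w$ induces, on each standard subword produced by the cyclic extraction, either no change or an elementary Knuth move. That claim is precisely the hard content of the theorem, and you do not prove it: ``one checks'' and ``the case analysis must verify'' stand in for the entire reselection/compensation analysis, which you yourself flag as the main obstacle. Moreover, as stated the claim is not quite the right statement: if both exchanged letters happen to lie in the same extracted subword, the induced change is a transposition of two letters that are adjacent in that subword, but the witness letter $y$ of the Knuth relation need not belong to the subword, so the induced change need not be an elementary Knuth move of the subword at all. What is actually needed is (i) invariance of the standard charge under adjacent transpositions of letters whose values differ by at least $2$ (which your standard-case computation does give), and (ii) a genuine analysis of how the cyclic scan reselects occurrences when the values involved are equal or consecutive, showing the extracted abstract standard words have unchanged total charge; item (ii) is where all the difficulty lives and is untouched. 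Until it is carried out, the proposal is a plan rather than a proof; for a complete argument one should follow [B, 2.4], exactly as the paper does by citation.
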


\subsection{}Here we give a brief explanation of Sch\"{u}tzenberger's jeu de taquin sliding algorithm. See [F, Section 2], [Bu, Chapter 2] for more details.
Let $\lambda -\rho$ be a skew shape. A position $b$ not belonging to $\lambda -\rho$ is called an upper (resp. lower) jeu de taquin position if $b$ shares at least a lower (resp. upper) edge with $\lambda -\rho$ and $\{b\}\cup (\lambda -\rho)$ is a valid skew shape.

Suppose we are given a tableau $T$ of the skew shape $\lambda -\rho$. To each upper (resp. lower) jeu de taquin position $b$, we associate a transformation $\mathrm{jdt}_b(T)$ of $T$, called a jeu de taquin slide of $T$ into $b$, by the following rule.
\begin{enumerate}
\item[(1)]
 Place a pawn $\clubsuit$ (resp. $\spadesuit$) in the position $b$.

\item[(2)]
 Compare the letters directly to the right (resp. left) and directly below (resp. above) the pawn. Exchange the pawn with the letter so that the result does not violate strict increasing along columns or weak increasing along rows.

If there is a letter below (resp. above) the pawn but no letter to the right (resp. left) of the pawn, exchange the pawn with the letter below (resp. above). Similarly, if there is a letter to the right (resp. left) of the pawn, but no letter below (resp. above) the pawn, then exchange the pawn with the letter to the right (resp. left).

\item[(3)] Repeat (2), until there is no letter to the right (resp. left) of or below (resp. above) the pawn. $\mathrm{jdt}_b(T)$ is the  tableau obtained by omitting the pawn from the result.
\end{enumerate}

\begin{exa} \emph{Let} $T$ \emph{be the  tableau}
\[
\begin{array}{llll}
& a & 2 & 3 \\
& 1 & 4 & 6\\
1 & 5 & b &\\
3 & & &
\end{array}
\]\emph{ with be the upper (resp. lower)jeu de taquin position} $a$ \emph{(}\emph{resp}. $b$\emph{)}.
\emph{We have}

$$\begin{array}{llll}
& \clubsuit & 2 & 3 \\
& 1 & 4 & 6\\
1 & 5 & &\\
3 & & &
\end{array} \rightarrow
\begin{array}{llll}
& 1 & 2 & 3 \\
& \clubsuit & 4 & 6\\
1 & 5 &  &\\
3 & & &
\end{array} \rightarrow
\begin{array}{llll}
& 1 & 2 & 3 \\
& 4 & \clubsuit & 6\\
1 & 5 &  &\\
3 & & &
\end{array} \rightarrow
\begin{array}{llll}
& 1 & 2 & 3 \\
& 4 & 6 & \clubsuit\\
1 & 5 &  &\\
3 & & &
\end{array}$$ \emph{and}
$$\begin{array}{llll}
&  & 2 & 3 \\
& 1 & 4 & 6\\
1 & 5 & \spadesuit  &\\
3 & & &
\end{array} \rightarrow
\begin{array}{llll}
&  & 2 & 3 \\
& 1 & 4 & 6\\
1 & \spadesuit & 5  &\\
3 & & &
\end{array} \rightarrow
\begin{array}{llll}
&  & 2 & 3 \\
& \spadesuit & 4 & 6\\
1 & 1 & 5  &\\
3 & & &
\end{array}
$$ \emph{Hence, we obtain}
$\mathrm{jdt}_a(T)=\begin{array}{llll}
& 1 & 2 & 3 \\
& 4 & 6 & \\
1 & 5 &  &\\
3 & & &
\end{array}$ \emph{and} $\mathrm{jdt}_b(T)=\begin{array}{llll}
&  & 2 & 3 \\
&  & 4 & 6\\
1 & 1 & 5  &\\
3 & & &
\end{array}$
\end{exa}

Two tableaux $T$ and $T'$ are called jeu de taquin equivalent if $T'$ can be obtained by a sequence of jeu de taquin slides of $T.$
Let $T$ be a tableau. It is well-known  that, see [F, 2], for example, there exists a unique tableau $\mathrm{jdt}(T)$ of partition shape which is jeu de taquin equivalent to $T.$     The following two results are well-known
\begin{thm}[{[B, Remark 2.3.25]}]\label{thm 1.4.2}
 Let $T$ be a tableau. Then
\begin{equation}\label{F 1.1}
\mathrm{jdt}(T)=[w(T)\rightarrow \emptyset]
\end{equation}
\end{thm}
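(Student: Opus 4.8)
The plan is to prove the identity inside the plactic monoid, taking for granted, as the excerpt does, that the rectification $\mathrm{jdt}(T)$ is a well-defined partition-shaped tableau reached from $T$ by a finite sequence of slides. I would assemble three ingredients: (i) for a tableau $U$ of partition shape one has $[w(U)\rightarrow\emptyset]=U$; (ii) a single jeu de taquin slide leaves the reading word in the same Knuth class, i.e. $w(T)\sim_K w(\mathrm{jdt}_b(T))$ for every admissible position $b$; and (iii) the insertion tableau $[w\rightarrow\emptyset]$ depends only on the Knuth class of $w$. Granting these, the theorem follows in one line: applying (ii) along the sequence of slides rectifying $T$ gives $w(T)\sim_K w(\mathrm{jdt}(T))$, then (iii) gives $[w(T)\rightarrow\emptyset]=[w(\mathrm{jdt}(T))\rightarrow\emptyset]$, and finally (i), applied to the partition-shaped $\mathrm{jdt}(T)$, identifies this with $\mathrm{jdt}(T)$.

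First I would prove (i) by induction on the number of rows of $U$. Peeling off the bottom row $\beta$ and writing $U^{\flat}$ for the tableau of the remaining upper rows, the reading convention gives the factorization $w(U)=w(\beta)*w(U^{\flat})$, whence $[w(U)\rightarrow\emptyset]=[w(\beta)\rightarrow U^{\flat}]$ by the inductive hypothesis. The column-bumping estimates in Lemma \ref{lem 1.2.1} and Corollary \ref{cor 1.2.2} then show that inserting the row $\beta$ into $U^{\flat}$ reattaches it intact as the new bottom row, so that the result is exactly $U$.

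The core of the proof is (ii), which I would obtain by tracking the reading word through the elementary steps of a slide. In each step the pawn is exchanged with a single neighbouring entry, and that entry moves past its neighbours in the reading word; a short geometric analysis shows that such an exchange either leaves the word unchanged or transposes a letter with an adjacent pair of letters. In the latter case the tableau inequalities---strict increase down columns, weak increase along rows---supply precisely the hypotheses $x\le y<z$ or $x<y\le z$ appearing in the two elementary Knuth transformations of Section 1.3, so each step of the slide realizes a Knuth equivalence. Carrying out this analysis for both the upper slide $\mathrm{jdt}_b$ (pawn $\clubsuit$) and the lower slide (pawn $\spadesuit$) yields (ii).

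For (iii) it suffices to check that a single elementary Knuth transformation does not change $[w\rightarrow\emptyset]$; this is a finite verification that column-inserting the two transposed letters in either admissible order deposits the new box in the same cell, which is exactly the content of the monotonicity statements Lemma \ref{lem 1.2.1} and Lemma \ref{lem 1.2.3}. I expect step (ii) to be the main obstacle. The difficulty is purely combinatorial bookkeeping: one must organize the slide into horizontal and vertical pawn moves and, at each move, control the relative order of the three letters involved in order to match an elementary Knuth move, including the boundary situations where a neighbour is absent. By comparison, (i) and (iii) are routine once the column-bumping machinery of Section 1.2 is in hand.
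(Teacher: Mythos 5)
You should first note that the paper contains no proof of this statement at all: Theorem \ref{thm 1.4.2} is quoted as a known result from [B, Remark 2.3.25], just as Theorem \ref{thm 1.4.3} is quoted from [F, Lemma 2.3]. So your proposal can only be measured against the classical argument in the literature, and your skeleton is exactly that argument, and it is sound: (i) column-inserting the reading word of a partition-shaped tableau recovers that tableau; (ii) jeu de taquin slides preserve the Knuth class of the reading word; (iii) the column-insertion tableau depends only on the Knuth class. These combine, exactly as you say, into $[w(T)\rightarrow\emptyset]=[w(\mathrm{jdt}(T))\rightarrow\emptyset]=\mathrm{jdt}(T)$.

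The genuine gap is your justification of (ii). With the paper's reading convention (rows read right to left, top row first), a horizontal pawn move indeed leaves the word unchanged, but a vertical move does not merely ``transpose a letter with an adjacent pair of letters'': the letter that changes rows passes, in the reading word, every letter of its new row lying to the left of the pawn and every letter of its old row lying to the right of the vacated cell --- arbitrarily many letters. A single slide step is therefore a composite of many elementary Knuth transformations, and producing that factorization from the tableau inequalities is precisely the nontrivial content of [F, Lemma 2.3], i.e.\ of Theorem \ref{thm 1.4.3}, which the paper states immediately after and which you could simply cite (Fomin's proof does not use the present theorem, so there is no circularity). With that substitution your proof closes, since (i) and (iii) are indeed routine; still, be aware that (iii) is Knuth's theorem on insertion and is not ``exactly the content'' of Lemmas \ref{lem 1.2.1} and \ref{lem 1.2.3}, which control only the row indices where new boxes appear, and that in (i) the intermediate insertions genuinely bump entries out of the bottom row (only the final tableau restores it), so ``reattaches it intact'' requires a more careful induction than your sketch suggests.
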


\begin{thm}[{[F, Lemma 2.3]}]\label{thm 1.4.3} Let $T$ be a tableau, and let $a$ be a jeu de taquin position of $T$. Then
$$w(T)\sim_K w(\mathrm{jdt}_a(T)).$$
\end{thm}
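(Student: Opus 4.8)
The plan is to deduce the statement from Theorem \ref{thm 1.4.2} rather than by tracking the reading word through the slide directly. The key preliminary observation is that a single jeu de taquin slide does not change the rectification: since $\mathrm{jdt}_a(T)$ is obtained from $T$ by one slide, the tableaux $T$ and $T':=\mathrm{jdt}_a(T)$ are jeu de taquin equivalent, so by the uniqueness of the partition-shape representative (recalled just before Theorem \ref{thm 1.4.2}) we have $\mathrm{jdt}(T)=\mathrm{jdt}(T')$.

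First I would feed both tableaux into Theorem \ref{thm 1.4.2}, which gives
\[
[w(T)\rightarrow\emptyset]=\mathrm{jdt}(T)=\mathrm{jdt}(T')=[w(\mathrm{jdt}_a(T))\rightarrow\emptyset],
\]
so the two reading words have the same column-insertion tableau. It then remains to pass from ``equal insertion tableau'' back to ``Knuth equivalent''. For this I would invoke the standard fact (see [B], [Bu]) that every word is Knuth equivalent to the reading word of its insertion tableau, i.e. $w\sim_K w([w\rightarrow\emptyset])$; equivalently, two words are Knuth equivalent if and only if they have the same insertion tableau. Granting this,
\[
w(T)\sim_K w([w(T)\rightarrow\emptyset])=w([w(\mathrm{jdt}_a(T))\rightarrow\emptyset])\sim_K w(\mathrm{jdt}_a(T)),
\]
which is the assertion.

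The substantive input, and the step I expect to be the real obstacle, is the cited fact $w\sim_K w([w\rightarrow\emptyset])$ (the nontrivial direction of Knuth's theorem). The natural way to establish it from scratch is by induction on the length of $w$: one shows that inserting a single letter $x$ into a tableau $P$ alters the reading word only within its Knuth class, namely $w([x\rightarrow P])\sim_K x\,w(P)$, by following the bumping route and checking that each individual bump is realized by one of the elementary transformations $uxzyv\sim_K uzxyv$ $(x\le y<z)$ and $uyzxv\sim_K uyxzv$ $(x<y\le z)$. This ``bumping is a Knuth move'' bookkeeping is where all the inequality checking lives; it is routine but must be carried out carefully, and crucially without reference to jeu de taquin, so as not to make the argument circular.

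Alternatively one could argue directly, as in [F]: decompose the full slide into its elementary one-cell moves and show that each preserves the Knuth class of the reading word. A horizontal move leaves $w(T)$ literally unchanged, since it only moves a single entry past the (unrecorded) hole within one row, leaving the left-to-right sequence of genuine entries in every row intact. A vertical move relocates the single entry $c$ that crosses the row boundary; in the reading word this transports $c$ leftward past the entries lying to its right in the lower row (all $\ge c$) and those to the left of its target cell in the upper row (all $\le c$), the inequalities coming from the row-weak/column-strict conditions together with the slide's comparison rule. The hard part of this route is proving that this long transposition is a composition of elementary Knuth transformations; after reducing to the two-row case (entries of other rows form an inert prefix and suffix of the word) it can be organized as an induction, but the inequality bookkeeping is considerably more delicate than in the rectification argument, which is why I would prefer the latter.
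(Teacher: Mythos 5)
Your primary argument does not match the paper's treatment: the paper gives no proof of this statement at all. It quotes the result from Fomin ([F, Lemma 2.3]) and adds, in the remark immediately after it, only that Fomin's proof, written for standard tableaux, goes through unchanged for arbitrary tableaux. Fomin's proof is the direct one --- decompose a slide into unit moves and check that each one preserves the Knuth class of the reading word --- which is precisely the ``alternative'' route you sketch in your final paragraph and then set aside.

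The route you actually propose has a circularity problem that you only half-address. You are rightly careful to demand that the fact $w\sim_K w([w\rightarrow\emptyset])$ be established without reference to jeu de taquin, but you apply no such scrutiny to your other two inputs: the uniqueness of the rectification (which is what justifies $\mathrm{jdt}(T)=\mathrm{jdt}(\mathrm{jdt}_a(T))$) and Theorem \ref{thm 1.4.2}. In the standard development --- including the sources this paper cites for exactly these facts, [F, \S 2] and [B, 2.3] --- both are \emph{corollaries} of the lemma being proven: one first shows that slides preserve Knuth equivalence, then that each Knuth class contains exactly one reading word of a partition-shape tableau, and only then concludes that rectification is well defined and equals $[w(T)\rightarrow\emptyset]$. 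Deducing the lemma from those corollaries inverts the logical order; your derivation is consistent with the order in which the paper happens to list its imported facts, but it cannot serve as an actual proof of the lemma unless you supply independent proofs of both inputs. For uniqueness of rectification such proofs exist (Sch\"utzenberger's original argument, or dual equivalence), but a proof of Theorem \ref{thm 1.4.2} that does not pass through the present lemma is not available in the cited literature, so the circle is not easily broken. The argument that genuinely has to be carried out is the direct one you deferred: horizontal moves leave $w(T)$ literally unchanged, and a vertical move is a long transposition which, after reduction to two rows, one verifies to be a composition of elementary Knuth transformations; that is Fomin's proof, and the paper's only addition is the observation that nothing in it uses standardness of the entries.
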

\begin{rem}
\emph{In [F, Lemma 2.3], Fomin proved Theorem 1.4.3 in the case where  }$T$\emph{ is a standard tableau. But his proof works when }$T$\emph{ is any tableau.}
\end{rem}
As a corollary of Theorem 1.4.3, we have
\begin{cor}\label{cor 1.4.4}
Let $T$ be a tableau of partition weight. Then
$$\mathrm{c}(T)=\mathrm{c}(\mathrm{jdt}(T)).$$
\end{cor}

\begin{proof}
By Theorem \ref{thm 1.4.3}, we have $w(T)\sim_K w(\mathrm{jdt}(T))$. By Lemma \ref{lem 1.3.1}, we have $\mathrm{c}(T)=\mathrm{c}(w(T))=\mathrm{c}(w(\mathrm{jdt}(T)))=\mathrm{c}(\mathrm{jdt}(T)).$
\end{proof}

\subsection{}
Following [F], we introduce the RS-correspondence, which is a way to associate with a word $w$ a pair $(Q_{w},P_{w})$ of  tableaux of the same partition shape. Assume $w=a_ma_{m-1}\cdots a_1$ is given. Then $(Q_{w},P_{w})$ is obtained as follows.
Begin with $(Q(0),P(0))=(\emptyset,\emptyset),$ and assume that $(Q(t),P(t))$ are defined for $t<m.$ Define $Q(t+1)$ and $P(t+1)$ by
\par\smallskip\noindent
\begin{enumerate}
\item[(1)]
$P(t+1)=[a_{t+1}\rightarrow P(t)],$
\item[(2)]
$Q(t+1)$ is obtained from $Q(t)$ by adding a box with letter $t+1$ in it such that $Q(t+1)$ and $P(t+1)$ have the same shape.
\end{enumerate}
\par\smallskip\noindent
The process ends at $(Q(m),P(m))$. Let $Q_w=Q(m)$ and $P_w=P(m).$ We denote the correspondence by $w\overset{\mathrm{RS}}{\rightarrow }(Q_{w},P_{w}).
$
Note that $Q_w$ is a standard tableau, called the recording tableau. Notice that the RS-correspondence asserts that for a given pair of tableaux $(Q,P)$ of the same partition shape such that $Q$ is a standard, there exists a unique word $w$ such that $w\overset{\mathrm{RS}}{\rightarrow }(Q,P).$

\subsection{} A word $w=a_{m}a_{m-1}\cdots a_{1}$ is called a lattice permutation if for $1\leq r\leq m$ and $i\geq 1$, the number of occurrences of the symbol $i$ in $a_{r}a_{r-1}\cdots a_{1}$ is not less than the number of occurrences of the symbol $i+1$.
For partitions $\lambda$, $\rho$ and $\mu$ with $\lambda\supset \rho$, let $\mathrm{Tab}(\lambda-\rho,\mu)$ be the set of tableaux of shape $\lambda-\rho$ and weight $\mu$. Denote by $\mathrm{Tab}^0(\lambda-\rho,\mu)$ the set of tableau $T\in \mathrm{Tab}(\lambda-\rho,\mu)$ such that $w(T)$ is a lattice permutation.

Let $\mu\in\mathscr{P}_{N},$ and let $\lambda$, $\rho$ be partitions such that $\lambda\supset\rho$ and $|\lambda|-|\rho|=N.$ In [M, I, (9.4)], a bijection
\begin{equation}\label{nF 1.2}
\theta: \mathrm{Tab}(\lambda -\rho ,\mu ) \isom \coprod\limits_{\nu
\in \mathscr{P}_{N}}\mathrm{Tab}^{0}(\lambda
- \rho ,\nu )\times \mathrm{Tab}(\nu ,\mu )
\end{equation}
is constructed.  In what follows we shall construct a different type of bijection apart
from $\theta$, by using RS-correspondence.
For $\nu\in\mathscr{P}_{N}$, we denote by $L(\nu)$  the set of lattice permutations of weight $\nu$. To $T\in\mathrm{ST}(\nu)$, we associate a word $\sigma_T=b_{N}\cdots \cdots b_1$ where $b_k=i$ if the letter $k$ appears in the $i$-th row of $T.$ It is well-known that the map
$T\mapsto \sigma_T$ gives a bijection $L(\nu)\isom \mathrm{ST}(\nu).$

For $T\in\mathrm{Tab}(\lambda -\rho ,\mu )$, set $Q_T=Q_{w(T)}$, $P_T=P_{w(T)}$ and $\sigma(T)=\sigma_{Q_T}.$ We write $w(T)=a_{N}\cdots a_1$ and set $b_{i}=R(a_{i}\rightarrow [a_{i-1}\cdots a_{1}\rightarrow \emptyset ])$. Then it is easy to see $\sigma(T)=b_{N}\cdots b_1.$

\begin{lem}\label{lem 1.6.1}
For $T\in \mathrm{Tab}(\lambda - \rho ,\mu ),$ $\sigma (T)$
is compatible with $\lambda -\rho .$
\end{lem}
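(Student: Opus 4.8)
The plan is to exhibit a tableau of shape $\lambda-\rho$ whose word is $\sigma(T)$ by placing the letters of $\sigma(T)=b_{N}\cdots b_{1}$ into the boxes of $\lambda-\rho$ in the reading order, and then to verify that the resulting filling $S$ is a genuine tableau. The key bookkeeping observation is that $T$ and $S$ have the same shape, so the reading order identifies their boxes: if ``box $i$'' denotes the $i$-th box of $\lambda-\rho$ met in the reading order, then $T$ carries $a_{i}$ at box $i$, while $S$ carries $b_{i}=R(a_{i}\rightarrow[a_{i-1}\cdots a_{1}\rightarrow\emptyset])$ there. Thus it suffices to prove that $S$ is weakly increasing along rows and strictly increasing down columns, which is exactly compatibility with $\lambda-\rho$.

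For the row condition I would write $w(T)=W^{(k)}*\cdots*W^{(1)}$, where $W^{(j)}$ is the word of the $j$-th row of $T$ from the top and $W^{(1)}$, the top row, is inserted first. Each $W^{(j)}$ is a row, so applying Corollary \ref{cor 1.2.2} with the base tableau taken to be $[W^{(j-1)}*\cdots*W^{(1)}\rightarrow\emptyset]$ (the result of inserting the rows lying above) shows that the block of $\sigma(T)$ recording the insertions of $W^{(j)}$ is again a row. This block occupies precisely the $j$-th row of $S$, so every row of $S$ is weakly increasing.

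The column condition is the heart of the matter, and the main obstacle is that two consecutive rows of a skew shape are in general neither of equal length nor aligned, whereas Proposition \ref{prop 1.2.4} applies only to an aligned double-row. I plan to reduce to that proposition as follows. Fix consecutive rows $j$ and $j+1$; since $\lambda,\rho$ are partitions their overlapping columns form a contiguous range, and the column-strictness of $T$ makes the overlapping portions of rows $j$ and $j+1$ an aligned double-row. The right-hand excess of row $j$ is read, hence inserted, before the overlap of row $j$, so I absorb it into the base tableau $T_{0}'=[\,\text{right-excess of }W^{(j)}\rightarrow[W^{(j-1)}*\cdots*W^{(1)}\rightarrow\emptyset]\,]$; the left-hand excess of row $j+1$ is read after the overlap of row $j+1$, so it does not affect the recording values of that overlap, nor do the rows below. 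With these identifications the actual insertions reproduce exactly the situation of Proposition \ref{prop 1.2.4}: insert the overlap of row $j$ into $T_{0}'$, then the overlap of row $j+1$ into the result. Hence the associated recording values form a double-row, i.e. are strictly increasing down each shared column of $S$. Because the columns of a skew diagram are contiguous, strictness across every pair of consecutive rows yields strictness down each full column, so $S$ is a tableau and $\sigma(T)$ is compatible with $\lambda-\rho$.

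The delicate point I would check carefully is precisely that the order in which the $b_{i}$ are actually computed agrees, block by block, with the reduced insertion just described — that the right-excess really is processed before, and the left-excess after, the overlap — and that the column indexing supplied by Proposition \ref{prop 1.2.4} matches the physical columns of $S$. Lemma \ref{lem 1.2.1} and Lemma \ref{lem 1.2.3} (the latter via the symbolic letter $\infty$) are the natural tools to justify these comparisons should the direct identification need reinforcement.
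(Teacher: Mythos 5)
Your proposal is correct and takes essentially the same approach as the paper's proof: reduce to a pair of consecutive rows, get the row condition from Corollary \ref{cor 1.2.2} applied to each row word over the tableau of previously inserted letters, and get the column condition from Proposition \ref{prop 1.2.4} applied to the aligned overlap, with the rows above plus the right excess of the upper row absorbed into the base tableau. Your $T_{0}'$ and the two overlap insertions are precisely the paper's $T_{3}=[a_{s-1}\cdots a_{1}\rightarrow\emptyset]$ and its double-row $w_{3}$, and the "delicate point" you flag (that nothing else is read between the two overlaps, and later letters cannot affect earlier recording values) holds exactly as you describe.
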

\begin{proof}
Under the notation above, it is enough to show that if $i$-th row and $(i+1)$-th row of $T$ are given as
$$\begin{array}{llllllll}
&  & a_{p} & a_{p-1} & \cdots  & a_s  & \cdots  & a_{k} \\
a_{q} & \cdots  & a_{t} & a_{t-1} & \cdots  & a_{p+1} &  &
\end{array},$$ then the array
$$T'_i=\begin{array}{llllllll}
&  & b_{p} & b_{p-1} & \cdots  & b_s  & \cdots  & b_{k} \\
b_{q} & \cdots  & b_{t} & b_{t-1} & \cdots  & b_{p+1} &  &
\end{array}$$ is a tableau.

Set $w_1=a_{p}a_{p-1}\cdots a_k,$ $w_2=a_qa_{q-1}\cdots a_{p+1}$ and $w_3=a_t a_{t-1}\cdots a_{p+1}a_{p}\cdots a_s.$ Note that $w_1$ and $w_2$ are both rows and $w_3$ is a double-row.
Set $T_1=[a_{k-1}\cdots a_1\rightarrow \emptyset],$ $T_2=[a_p\cdots a_1\rightarrow \emptyset]$ and $T_3=[a_{s-1}\cdots a_1\rightarrow \emptyset].$ Consider $w_1\rightarrow T_1$ and $w_2\rightarrow T_2.$ Then we see that $b_{p}\cdots b_k$ and $b_{q}\cdots b_{p+1}$ are rows by Corollary \ref{cor 1.2.2}. Consider $w_3\rightarrow T_3.$ Then $b_t\cdots b_{p+1}b_p\cdots b_s$ is a double-row by Proposition \ref{prop 1.2.4}. Hence $T'_i$ is a tableau.
\end{proof}

Take $T\in\mathrm{Tab}(\lambda -\rho ,\mu )$ and assume that the shape of $Q_T$ is $\nu$. Then by Lemma 1.6.1, there exists a unique tableau $D_T$ such that $w(D_T)=\sigma_T.$

\begin{thm}\label{thm 1.6.3}
Let $\mu\in\mathscr{P}_{N},$ and let $\lambda$, $\rho$ be partitions such that $\lambda\supset\rho$ with $|\lambda|-|\rho|=N.$ Then
\begin{enumerate}
\item[(1)]
The map $T \mapsto (D_T,P_T)=(D_T,\mathrm{jdt}(T))$ gives a bijection
\begin{eqnarray*}\label{F 1.2}
\widetilde{\Gamma}: \mathrm{Tab}(\lambda -\rho ,\mu ) \rightarrow \coprod\limits_{\nu \in
\mathscr{P}_{N }}\mathrm{Tab}^{0}(\lambda - \rho ,\nu )\times
\mathrm{Tab}(\nu ,\mu )\\
\end{eqnarray*}
\item[(2)]
If we write $\widetilde{\Gamma}(T)=(D,S),$ then $\mathrm{c}(T)=\mathrm{c}(S).$
\end{enumerate}
\end{thm}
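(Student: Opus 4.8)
The plan is to establish (1) by showing that $\widetilde{\Gamma}$ is well defined and injective, to deduce surjectivity from a cardinality comparison with the bijection $\theta$ of (\ref{nF 1.2}), and then to read off (2) from the jeu de taquin description of $P_T$. First I would confirm that $\widetilde{\Gamma}$ lands in the asserted target. Let $\nu$ be the common shape of $Q_T$ and $P_T$. By Theorem \ref{thm 1.4.2}, $P_T=P_{w(T)}=[w(T)\rightarrow\emptyset]=\mathrm{jdt}(T)$, so $P_T$ has shape $\nu$ and weight $\mathrm{wt}(w(T))=\mu$, whence $P_T\in\mathrm{Tab}(\nu,\mu)$. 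For the first coordinate, Lemma \ref{lem 1.6.1} guarantees that $\sigma(T)=\sigma_{Q_T}$ is compatible with $\lambda-\rho$, so the unique tableau $D_T$ of shape $\lambda-\rho$ with $w(D_T)=\sigma_{Q_T}$ exists. Since $Q_T\in\mathrm{ST}(\nu)$ corresponds, under the bijection $L(\nu)\isom\mathrm{ST}(\nu)$ of 1.6, to a lattice permutation $\sigma_{Q_T}$ of weight $\nu$, the word $w(D_T)$ is a lattice permutation and $\mathrm{wt}(D_T)=\nu$; thus $D_T\in\mathrm{Tab}^0(\lambda-\rho,\nu)$, and $\widetilde{\Gamma}$ is well defined.

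Next I would prove injectivity by undoing the construction one bijective step at a time. Suppose $\widetilde{\Gamma}(T)=\widetilde{\Gamma}(T')=(D,S)$. From $D_T=D_{T'}$ we get $\sigma_{Q_T}=w(D)=\sigma_{Q_{T'}}$, and since $L(\nu)\isom\mathrm{ST}(\nu)$ is a bijection this forces $Q_T=Q_{T'}$; together with $P_T=S=P_{T'}$ this gives $(Q_{w(T)},P_{w(T)})=(Q_{w(T')},P_{w(T')})$. The RS-correspondence is a bijection between words and pairs $(Q,P)$ of equal shape with $Q$ standard, so $w(T)=w(T')$, and as a word compatible with the fixed skew shape $\lambda-\rho$ determines a unique tableau of that shape, we conclude $T=T'$.

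To finish part (1) I would compare cardinalities rather than construct the inverse by hand. The classical bijection $\theta$ of (\ref{nF 1.2}) has exactly the same finite source $\mathrm{Tab}(\lambda-\rho,\mu)$ and target $\coprod_{\nu}\mathrm{Tab}^0(\lambda-\rho,\nu)\times\mathrm{Tab}(\nu,\mu)$ as $\widetilde{\Gamma}$, so these two sets have equal cardinality; an injection between finite sets of equal cardinality is automatically a bijection. I would remark that one can instead describe the inverse explicitly: from $(D,S)$ pass from the lattice permutation $w(D)$ to the standard tableau $Q\in\mathrm{ST}(\nu)$, recover the unique word $w$ with $w\overset{\mathrm{RS}}{\rightarrow}(Q,S)$, and let $T$ be the tableau of shape $\lambda-\rho$ with $w(T)=w$. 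This route, however, requires showing that $w$ is compatible with $\lambda-\rho$, i.e. the converse of Lemma \ref{lem 1.6.1}, and proving that converse directly is the genuine obstacle; this is precisely why the count against $\theta$ is the cleaner way to obtain surjectivity.

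Finally, part (2) is immediate. Since $S=P_T=\mathrm{jdt}(T)$ by Theorem \ref{thm 1.4.2}, Corollary \ref{cor 1.4.4} gives $\mathrm{c}(T)=\mathrm{c}(\mathrm{jdt}(T))=\mathrm{c}(S)$, both charges being defined because $\mathrm{wt}(T)=\mathrm{wt}(S)=\mu$ is a partition.
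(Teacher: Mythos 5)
Your proposal is correct and follows essentially the same route as the paper: injectivity via the chain $T\mapsto w(T)\mapsto (Q_T,P_T)\mapsto(\sigma(T),P_T)\mapsto(D_T,P_T)$ of reversible steps, surjectivity by counting against the bijection $\theta$ of (\ref{nF 1.2}), and part (2) from $P_T=\mathrm{jdt}(T)$ (Theorem \ref{thm 1.4.2}) together with Corollary \ref{cor 1.4.4}. Your additional explicit verification of well-definedness (via Lemma \ref{lem 1.6.1}) and your remark that an explicit inverse would require the converse of Lemma \ref{lem 1.6.1} are sensible elaborations of the same argument, not a different method.
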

\begin{proof}
For $T\in\mathrm{Tab}(\lambda -\rho ,\mu )$, $\widetilde{\Gamma}(T)=(D_{T},P_T)$ is obtained by the following procedure $$ T\mapsto w(T) \mapsto (Q_T,P_T)\mapsto (\sigma(T),P_T)\mapsto (D_T,P_T).$$ Each step is one to one, which implies that $\widetilde{\Gamma}$ is injective. Since there exists a bijection $\theta$ in (\ref{nF 1.2}), both sides have the same cardinality. Hence $\widetilde{\Gamma}$ is a bijection. Note that $P_T=\mathrm{jdt}(T)$ by Theorem \ref{thm 1.4.2}. This proves (1).
Since $P_T=\mathrm{jdt}(T),$  we have $\mathrm{c}(T)=\mathrm{c}(P_T)$ by Corollary \ref{cor 1.4.4}. Thus (2) holds.
\end{proof}
\begin{rem}\label{rem 1.6.4}
\emph{The property (2) is used in the later discussions. It is likely, as many
examples show, that} $\theta=\widetilde{\Gamma}$\emph{, but} \emph{we don't know the proof. The author does
not know whether the property (2) holds for the map} $\theta$\emph{ , though it is stated
in [KR1] without proof that} $\mathrm{c}(T)=\mathrm{c}(S)$ \emph{for} $\theta(T)=(D,S).$
\end{rem}

\section{Double Kostka polynomials}
In this section, we introduce the double Kostka polynomials, following [LS]. For more information, see [Sh1], [Sh2] and [LS].

\subsection{}
A pair of partitions $\Bla=(\lambda',\lambda'')$ is called a double partition of $N$ if $|\lambda'|+|\lambda''|=N.$ We denote by $\mathscr{P}_{N,2 }$ the set of double partitions of $N$. For partitions $\lambda,\mu,$ the Koskta polynomial $K_{\lambda,\mu}(t)\in \mathbb{Z}[t]$ is defined as in
[M, III,6]. In [S1], [S2], Kostka functions associated to complex reflection
groups were introduced as a generalization of Koskta polynomials. As a
special case of such Kostka functions, double Kostka polynomials $K_{\Bla,\Bmu}$ are defined, which are polynomials in $\mathbb{Z}[t]$ indexed by double partitions $\Bla,\Bmu\in\mathscr{P}_{N,2 }.$ A relationship between double Kostka polynomials and usual Koskta polynomials was studied in [LS]. In particular, in the special case where $\Bmu=(-,\mu''),$ there exists a very simple formula as follows.
\begin{prop}[{[LS, Lemma 3.4]}]\label{nthm 2.1.1}
 Assume that $\Bla,\Bmu\in\mathscr{P}_{N,2 }$ such that $\Bmu=(-,\mu'').$ Then we have
\begin{equation}\label{nF 2.1}
K_{\Bla,\Bmu}(t)=t^{|\lambda'|}\sum_{\eta\in\mathscr{P}_{N}}c^{\eta}_{\lambda',\lambda''}K_{\eta,\mu''}(t^2),
\end{equation} where $\Bla=(\lambda',\lambda'')$ and $c^{\eta}_{\lambda',\lambda''}$ is the Littlewood-Richardson coefficient (cf.
[M, I, 9]).
\end{prop}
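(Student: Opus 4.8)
The plan is to work from the definition of the double Kostka polynomials as the entries of the transition matrix between the Schur functions $\{s_\Bla\}$ and the Hall--Littlewood functions $\{P_\Bmu(t)\}$ in the graded module $\mathbf{V}=\bigoplus_n\mathbf{V}_n$ attached to the complex reflection group $G(2,1,n)$ of [S1, S2], so that by definition
\begin{equation*}
s_\Bla=\sum_{\Bmu\in\mathscr{P}_{N,2}}K_{\Bla,\Bmu}(t)\,P_\Bmu(t).
\end{equation*}
First I would fix the realization of $\mathbf{V}$ inside a two-colour ring of symmetric functions, in which a box of colour $1$ carries a weight of $t$ while the Hall--Littlewood deformation attached to colour $2$ is governed by $t^2$. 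The factors $t^{|\lambda'|}$ and $t^2$ appearing in the statement should both originate from this normalization, and pinning them down at the outset is essential to the bookkeeping that follows.

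The heart of the argument is to construct a specialization homomorphism $\pi$ from $\mathbf{V}$ to the ordinary ring of symmetric functions $\Lambda[t]$ that isolates the column $\Bmu=(-,\mu'')$. Concretely, I would seek $\pi$ with two properties: (a) $\pi(P_\Bmu(t))=0$ whenever $\mu'\neq-$, while $\pi\bigl(P_{(-,\mu'')}(t)\bigr)=P_{\mu''}(t^2)$ is the ordinary Hall--Littlewood function with parameter $t^2$; and (b) $\pi(s_\Bla)=t^{|\lambda'|}\,s_{\lambda'}\,s_{\lambda''}$, the product of the two ordinary Schur functions, weighted by $t^{|\lambda'|}$ for the colour-$1$ content of $\Bla$. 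Granting this, applying $\pi$ to the defining relation yields
\begin{equation*}
t^{|\lambda'|}\,s_{\lambda'}\,s_{\lambda''}=\sum_{\mu''\in\mathscr{P}_{N}}K_{\Bla,(-,\mu'')}(t)\,P_{\mu''}(t^2).
\end{equation*}

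I would then expand the left-hand side by the Littlewood--Richardson rule, $s_{\lambda'}s_{\lambda''}=\sum_{\eta}c^{\eta}_{\lambda',\lambda''}s_\eta$, and feed each $s_\eta$ into the ordinary Kostka relation with parameter $t^2$, namely $s_\eta=\sum_{\mu''}K_{\eta,\mu''}(t^2)\,P_{\mu''}(t^2)$. Comparing the coefficients of $P_{\mu''}(t^2)$ on the two sides, which is legitimate because the $P_{\mu''}(t^2)$ are linearly independent, gives $K_{\Bla,(-,\mu'')}(t)=t^{|\lambda'|}\sum_{\eta}c^{\eta}_{\lambda',\lambda''}K_{\eta,\mu''}(t^2)$, which is exactly the asserted formula.

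I expect the main obstacle to be the construction and verification of $\pi$, in particular property (a): showing that every $P_\Bmu(t)$ with a nonempty first component is annihilated, and that $P_{(-,\mu'')}(t)$ restricts precisely to $P_{\mu''}(t^2)$ with the parameter correctly squared. I would attack this through the triangularity of the $P_\Bmu(t)$ with respect to the dominance order on double partitions, together with the orthogonality relations that define them, reducing the claim to a statement about leading terms and the underlying inner product; tracking the degree shift responsible for the global factor $t^{|\lambda'|}$ is the delicate part. The remaining ingredients, the Littlewood--Richardson expansion and the ordinary Kostka relation of [M, III], are standard and should present no difficulty.
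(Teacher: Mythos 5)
The first thing to note is that the paper contains no proof of this Proposition at all: it is imported verbatim from [LS, Lemma 3.4], so there is no internal argument to compare with, and what you are proposing is in effect a reproof of that result of [LS]. Within Shoji's framework your formal skeleton is correct: $s_\Bla(x)=s_{\lambda'}(x^{(1)})s_{\lambda''}(x^{(2)})$, the double Kostka polynomials are the transition matrix $s_\Bla=\sum_{\Bmu}K_{\Bla,\Bmu}(t)P_\Bmu(x;t)$, and once one has a linear map $\pi$ with your properties (a) and (b) --- concretely, the substitution $x^{(1)}\mapsto ty$, $x^{(2)}\mapsto y$, which satisfies (b) on the nose --- the Littlewood--Richardson expansion and comparison of coefficients in the linearly independent family $\{P_{\mu''}(y;t^2)\}$ give (\ref{nF 2.1}) exactly as you wrote.

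The genuine gap is property (a), and it is not a technical loose end: it carries the entire content of the statement. Since $\{s_\Bla\}$ is a basis of the two-colour ring, property (b) already determines $\pi$ uniquely (it must be the substitution map above), so the Proposition is logically equivalent to the assertion that this substitution kills every $P_\Bmu(x;t)$ with $\mu'\neq\emptyset$ and sends $P_{(-,\mu'')}(x;t)$ to $P_{\mu''}(y;t^2)$. Your plan to verify this by ``triangularity plus orthogonality, reducing to leading terms'' does not meet the difficulty. The substitution is far from injective, so the vanishing of $\pi(P_\Bmu)$ is a large cancellation among all terms of the triangular expansion and cannot be read off from leading terms; and the orthogonality defining Shoji's $P_\Bmu$ is with respect to a particular bilinear form on $\Lambda(x^{(1)})\otimes\Lambda(x^{(2)})$ (built, for $e=2$, from both sets of power sums with signs), whose interaction with the substitution requires an explicit adjointness computation on power sums --- this is precisely where the squared parameter $t^2$ and the prefactor $t^{|\lambda'|}$ are produced. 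In particular, these are theorems to be proved, not normalizations that can be ``fixed at the outset'' as your first paragraph suggests. Until that computation --- for instance, identifying the dual basis element of $P_{(-,\mu'')}$ with a modified $Q_{\mu''}$ at parameter $t^2$ in the alphabet $tx^{(1)}\cup x^{(2)}$, or some equivalent statement --- is actually carried out, your argument is a reduction of [LS, Lemma 3.4] to an equivalent unproved claim about Shoji's Hall--Littlewood functions rather than a proof of it.
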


\subsection{}Let $\Bla=(\lambda',\lambda'')\in \mathscr{P}_{N,2}.$ Following [LS], we call a pair $T=(T_{+},T_{-})$ a tableau of shape $\Bla$ if $T_{+}$ (resp. $T_{-}$) is a tableau of shape $\lambda'$ (resp. $\lambda''$). Let $\mathrm{Tab}(\Bla)$ be the set of tableaux of shape $\Bla.$

Let $\Bla=(\lambda',\lambda'')\in \mathscr{P}_{N,2}$ with $\lambda'=(\lambda'_1,\ldots,\lambda'_s)$ and $\lambda''=(\lambda''_1,\ldots,\lambda''_t)$. For an integer $a\geq \lambda''_1,$ we define a partition $\widetilde{\xi}_{\Bla,a}=(\xi_1,\xi_2,\ldots,\xi_{s+t})\in \mathscr{P}_{N+as}$ by

\begin{equation*}
\xi_i = \begin{cases}
                \la'_i + a  &\quad\text{ for $1 \le i \le s$}, \\
                \la''_{i - s}        &\quad\text{ for $s+1 \le i \le s+t$. }
        \end{cases}
\end{equation*}
and define a skew shape $\xi_{\Bla,a}$ by $\xi_{\Bla,a}=\widetilde{\xi}_{\Bla,a}-(a^s).$ Then $\xi_{\Bla,a}$ consists of two connected components of shape $\lambda'$ and $\lambda''$. Thus $T\in\mathrm{Tab}(\Bla)$ can be naturally identified with an element $\widetilde{T}_a\in\mathrm{Tab}(\xi_{\Bla,a}).$
For $T\in \mathrm{Tab}(\Bla)$, put
$w(T)=w(T_{-})*w(T_{+})$ and $\mathrm{wt}(T)=\mathrm{wt}(w(T)).$ It is easy to see that
$w(T)=w(\widetilde{T}_a)$
and $\mathrm{wt}(T)=\mathrm{wt}(\widetilde{T}_a).$

Let $\mu\in\mathscr{P}_{N }.$ We denote by $\mathrm{Tab}(\Bla,\mu)$ the set of tableaux of shape $\Bla$ and weight $\mu.$ Let $\mathrm{Tab}^0(\Bla,\mu)$ the subset of those $T$ such that $w(T)$ is a lattice permutation.
By applying Theorem \ref{thm 1.6.3} for $\lambda-\rho=\xi_{\Bla,a},$ we obtain a bijection $\widetilde{\Gamma}.$ Under the identification $\mathrm{Tab}(\xi_{\Bla,a},\mu)$ with $\mathrm{Tab}(\Bla,\mu),$ $\widetilde{\Gamma}$ induces a bijection
\begin{equation}\label{nF 2.2}
\Gamma_a: \mathrm{Tab}(\Bla,\mu ) \rightarrow \coprod\limits_{\nu \in
\mathscr{P}_{N }}\mathrm{Tab}^{0}(\Bla ,\nu )\times
\mathrm{Tab}(\nu ,\mu ).
\end{equation}
But since $w(T)=w(\widetilde{T}_a)$ for any $a,$ $\Gamma_a$ does not depend on the choice of $a,$ which we denote by $\Gamma.$

\subsection{} It is well-known that the Kostka polynomials $K_{\lambda,\mu}$ have a combinatorial
description due to Lascoux and Sch\"{u}tzenberger (see e.g., [M, III, (6.5)]).
\begin{thm}[Lascoux-Sch\"{u}tzenberger] \label{nthm 2.3.1}
For partitions $\lambda,\mu\in\mathscr{P}_{N },$ we have
 \begin{equation}\label{nF 2.3}
 K_{\lambda,\mu}(t)=\sum_{T\in\mathrm{Tab}(\lambda,\mu)}t^{\mathrm{c}(T)}.
 \end{equation}
\end{thm}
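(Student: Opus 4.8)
The statement is the classical Lascoux--Sch\"{u}tzenberger theorem, so the plan is to prove that the charge generating function $\widetilde{K}_{\lambda,\mu}(t):=\sum_{T\in\mathrm{Tab}(\lambda,\mu)}t^{\mathrm{c}(T)}$ coincides with the Kostka--Foulkes polynomial $K_{\lambda,\mu}(t)$, which is defined (as in [M, III, 6]) by the expansion $s_\lambda=\sum_\mu K_{\lambda,\mu}(t)P_\mu(x;t)$ of the Schur function in the Hall--Littlewood basis. First I would record that $\widetilde{K}_{\lambda,\mu}(t)$ is well posed and stable: by Lemma \ref{lem 1.3.1} the charge $\mathrm{c}(T)$ depends only on the Knuth class of $w(T)$, hence only on $T$, and by Corollary \ref{cor 1.4.4} it is unchanged by jeu de taquin, so it may be freely transported along the RS-correspondence. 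Specializing $t=1$ gives the base identity $\widetilde{K}_{\lambda,\mu}(1)=|\mathrm{Tab}(\lambda,\mu)|=K_{\lambda,\mu}(1)$, since $P_\mu(x;1)=m_\mu$.

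The core of the argument is to show that $\widetilde{K}$ and $K$ satisfy one and the same recursion in $\mu$, after which induction on $|\mu|$ finishes the proof. On the symmetric-function side, the Hall--Littlewood polynomials obey a Pieri-type branching recurrence that expresses $K_{\lambda,\mu}(t)$ as an explicit $t$-weighted sum of polynomials $K_{\nu,\overline{\mu}}(t)$, where $\overline{\mu}$ is $\mu$ with its last part deleted and $\lambda/\nu$ runs over horizontal strips of size $\mu_{\ell(\mu)}$. On the tableau side I would group the $T\in\mathrm{Tab}(\lambda,\mu)$ according to the horizontal strip occupied by the largest letter $\ell(\mu)$ (which is indeed a horizontal strip, as equal entries cannot share a column); deleting that strip yields a tableau of straight shape $\nu$ and weight $\overline{\mu}$, and the task is to compare $\mathrm{c}(T)$ with the charge of the deleted tableau. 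The bijective bookkeeping relating straight and skew shapes, Littlewood--Richardson data (lattice permutations), and the charge statistic is supplied by the RS-correspondence together with Theorem \ref{thm 1.6.3}, whose charge-preserving property (2) is precisely the $t$-refinement of the Kostka-number identity that the recursion requires.

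The main obstacle is the charge bookkeeping itself. Because $\mathrm{c}(T)$ is defined globally on the reading word --- through the extraction of standard subwords and the index algorithm --- isolating the contribution of the largest letter and proving that it changes by exactly the power of $t$ predicted by the Hall--Littlewood branching is the delicate combinatorial heart of the theorem; this is the cyclage/cocharge analysis of Lascoux and Sch\"{u}tzenberger, and it is where essentially all the real work lies. The invariance results quoted above guarantee only that the statistic is stable enough for such an analysis to be internally consistent; they do not by themselves produce the recursion, and matching the two recursions term by term is what would demand the most care.
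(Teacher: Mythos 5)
The paper does not prove this statement at all: it is quoted as a classical theorem of Lascoux and Sch\"utzenberger with a pointer to [M, III, (6.5)], and Section 2 uses it as a black box. So there is no paper proof to match; the only question is whether your argument stands on its own, and it does not. Your plan --- induct on $|\mu|$, match the Hall--Littlewood (Morris-type) branching recursion for $K_{\lambda,\mu}(t)$ against a tableau-side recursion obtained by deleting the horizontal strip occupied by the largest letter, and compare charges before and after --- is a legitimate outline of one known route to the theorem. But the single step carrying all the content, namely that deleting the strip of largest letters changes $\mathrm{c}(T)$ by exactly the power of $t$ dictated by the Hall--Littlewood branching coefficient, is never argued; you explicitly name it (``the cyclage/cocharge analysis of Lascoux and Sch\"utzenberger'') and defer it. Since that is precisely where the theorem lives, the proposal is a roadmap rather than a proof.

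Moreover, the tools you borrow from the paper cannot fill this hole. Lemma \ref{lem 1.3.1} and Corollary \ref{cor 1.4.4} say only that charge is invariant under Knuth equivalence and jeu de taquin, and Theorem \ref{thm 1.6.3}(2) transfers charge across the bijection $\widetilde{\Gamma}$ for skew shapes; none of these controls how charge changes when the strip of largest letters is removed, because the word of $T$ and the word of the stripped tableau are not Knuth equivalent --- they do not even have the same length. So the invariance results give well-definedness and stability, as you say, but the recursion itself --- the term-by-term matching with the Pieri/branching formula --- would have to be established from scratch, and the proposal contains no argument for it.
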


For $\Bla\in\mathscr{P}_{N,2 },$ we define the charge $\mathrm{c}(T)$ of $T$ by $\mathrm{c}(T)=\mathrm{c}(w(T)).$ Then we
have the following Lascoux-Sch\"{u}tzenberger type formula for double Kostka polynomials $K_{\Bla,\Bmu}(t)$ for the special case where $\Bmu=(-,\mu'').$
\begin{thm}\label{nthm 2.3.2}
Let $\Bla,\Bmu\in\mathscr{P}_{N,2 },$ and assume that $\Bmu=(-,\mu'').$ Then
\[
K_{\Bla,\Bmu}(t)=t^{|\lambda'|}\sum_{T\in\mathrm{Tab}(\Bla,\mu'')}t^{2\mathrm{c}(T)}.
\]
\end{thm}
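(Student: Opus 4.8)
The plan is to transform the right-hand side by means of the charge-preserving bijection $\Gamma$ together with the classical Lascoux-Sch\"utzenberger formula, and then to match the outcome against the expression for $K_{\Bla,\Bmu}(t)$ supplied by Proposition~\ref{nthm 2.1.1}. Throughout I use that for $T\in\mathrm{Tab}(\Bla,\mu'')$ the charge is $\mathrm{c}(T)=\mathrm{c}(w(T))$, which is consistent with the identification $w(T)=w(\widetilde{T}_a)$ established in \S2.2.

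First I would apply the bijection $\Gamma$ of (\ref{nF 2.2}) to the indexing set. Writing $\Gamma(T)=(D,S)$ with $(D,S)\in\mathrm{Tab}^0(\Bla,\eta)\times\mathrm{Tab}(\eta,\mu'')$, Theorem~\ref{thm 1.6.3}(2) gives $\mathrm{c}(T)=\mathrm{c}(S)$. Hence
\[
\sum_{T\in\mathrm{Tab}(\Bla,\mu'')}t^{2\mathrm{c}(T)}
=\sum_{\eta\in\mathscr{P}_{N}}\;\sum_{D\in\mathrm{Tab}^0(\Bla,\eta)}\;\sum_{S\in\mathrm{Tab}(\eta,\mu'')}t^{2\mathrm{c}(S)}.
\]
Since the summand is independent of $D$, the inner double sum factors as $|\mathrm{Tab}^0(\Bla,\eta)|\cdot\sum_{S\in\mathrm{Tab}(\eta,\mu'')}t^{2\mathrm{c}(S)}$.

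Next I would identify the two factors. Applying Theorem~\ref{nthm 2.3.1} with parameter $t^2$ in place of $t$ yields $\sum_{S\in\mathrm{Tab}(\eta,\mu'')}t^{2\mathrm{c}(S)}=K_{\eta,\mu''}(t^2)$. For the remaining factor, recall from \S2.2 that $\mathrm{Tab}(\Bla)$ is realized through the skew shape $\xi_{\Bla,a}$, which is disconnected with components of shapes $\lambda'$ and $\lambda''$; its content generating function is therefore $s_{\lambda'}s_{\lambda''}$, so the number $|\mathrm{Tab}^0(\Bla,\eta)|$ of lattice tableaux of content $\eta$ equals the Littlewood-Richardson coefficient $c^{\eta}_{\lambda',\lambda''}$ (cf. [M, I, 9]). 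Combining these,
\[
\sum_{T\in\mathrm{Tab}(\Bla,\mu'')}t^{2\mathrm{c}(T)}
=\sum_{\eta\in\mathscr{P}_{N}}c^{\eta}_{\lambda',\lambda''}K_{\eta,\mu''}(t^2),
\]
and multiplying by $t^{|\lambda'|}$ and invoking Proposition~\ref{nthm 2.1.1} gives exactly $K_{\Bla,\Bmu}(t)$.

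The main obstacle I anticipate is the clean justification of $|\mathrm{Tab}^0(\Bla,\eta)|=c^{\eta}_{\lambda',\lambda''}$: one must argue that the skew Schur function of the disconnected shape $\xi_{\Bla,a}$ factors as $s_{\lambda'}s_{\lambda''}$ and that the lattice-word condition then selects precisely the Littlewood-Richardson fillings. The rest is bookkeeping, but it rests essentially on Theorem~\ref{thm 1.6.3}(2): it is exactly the charge preservation $\mathrm{c}(T)=\mathrm{c}(S)$ that permits the generating function to be factored, since otherwise the exponents could not be separated from the enumeration of the $D$-component.
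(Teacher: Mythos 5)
Your proposal is correct and follows essentially the same route as the paper: both pass through the bijection $\Gamma$ of (\ref{nF 2.2}), use the charge preservation $\mathrm{c}(T)=\mathrm{c}(S)$ from Theorem~\ref{thm 1.6.3}(2) to factor the generating function, identify $|\mathrm{Tab}^0(\Bla,\eta)|=c^{\eta}_{\lambda',\lambda''}$ (the paper cites [LS, Corollary 3.9] and [M, I, 9] for this, where you sketch the skew-Schur-function argument, which is the standard justification), apply the Lascoux--Sch\"utzenberger formula, and conclude via Proposition~\ref{nthm 2.1.1}. The only differences are notational (summation index $\eta$ versus $\nu$, and working with $t^2$ throughout rather than substituting at the end).
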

\begin{proof}
In [LS, Theorem 3.12], a similar formula was proved by making use of the bijection $\theta$ in (\ref{nF 1.2}) instead of $\widetilde{\Gamma}.$ Note that in that case, the charge $\mathrm{c}(T)$ defined there may be different from the charge here. Although the theorem can be proved in a similar way as in [LS], we give the proof below for the sake of completeness.
Assume that $\Bla=(\lambda',\lambda'')\in \mathscr{P}_{N,2 }$ and $\nu\in\mathscr{P}_{N}.$ First we note, by a general argument (see [LS, Corollary 3.9], [M, I,9]), that the bijection (\ref{nF 1.2}) implies that
\begin{equation}\label{nF 2.4}
|\mathrm{Tab}^0(\Bla,\nu)|=c^{\nu}_{\lambda',\lambda''}.
\end{equation}
We consider the map $p:\mathrm{Tab}(\Bla,\mu'')\rightarrow \amalg_{\nu\in\mathscr{P}_{N}}\mathrm{Tab}(\nu,\mu'')$ defined by $p(T)=S$ for $\Gamma(T)=(D,S).$ Then by (\ref{nF 2.2}), for each $S\in\mathrm{Tab}(\nu,\mu''),$ the set $p^{-1}(S)$ has the cardinality $c^{\nu}_{\lambda',\lambda''}$ and by Theorem \ref{thm 1.6.3} (2), $\mathrm{c}(T)=\mathrm{c}(S)$ for each $T\in p^{-1}(S).$ Hence we have
\begin{eqnarray*}
\sum_{T\in\mathrm{Tab}(\Bla,\mu'')}t^{\mathrm{c}(T)} &=& \sum_{\nu\in\mathscr{P}_{N}}\sum_{S\in\mathrm{Tab}(\nu,\mu'')}c^{\nu}_{\lambda',\lambda''}t^{\mathrm{c}(T)} \\
&=& \sum_{\nu\in\mathscr{P}_{N}}c^{\nu}_{\lambda',\lambda''}K_{\nu,\mu''}(t).
\end{eqnarray*} The second formula follows from (\ref{nF 2.3}). Now the theorem follows from Theorem
\ref{nthm 2.1.1}.
\end{proof}

\section{Crystals $B^{1,m}$}
\subsection{} Here we briefly review the notion of crystals. See, e.g., [HK] for more details.
Let $\mathfrak{g}$ be a symmetrizable Kac-Moody algebra with index set $I,$ weight lattice $P,$ dual weight lattice $P^\vee,$ root lattice $Q,$ fundamental weights $\Lambda_i$($i\in I$), simple roots $\alpha_i$ ($i\in I$) and simple coroots $h_i$($i\in I$). Let $\langle\text{ },\text{ }\rangle:P^\vee\times P\rightarrow \mathbb{Z}$ be the pairing defined by $\langle h, \Lambda\rangle=\Lambda(h)$ for $h\in P^\vee,$ $\Lambda\in P.$
Let $U_{q}(\mathfrak{g})$ be the quantized universal enveloping algebra of $\mathfrak{g}.$ An (abstract) $U_{q}(\mathfrak{g})$-crystal is a non-empty set $B$ together with maps
$$\mathrm{wt}:B\rightarrow P,\text{ }\varepsilon _{i},\varphi_{i}:B\rightarrow\mathbb{Z}\cup\{-\infty\},\text{ }e_{i},f_{i}:B\rightarrow B\sqcup \{0\}$$
subject to the conditions

\smallskip
\begin{enumerate}
\item[(1)]

$\left\langle h_{i},\mathrm{wt}(b)\right\rangle =\varphi_{i}(b)-\varepsilon _{i}(b)$ for all $i\in I$,

\item[(2)]
$\mathrm{wt}(e_{i}(b))=\mathrm{wt}(b)+\alpha _{i}$ if $e_{i}(b)\in B
$

\item[(3)]
 $\mathrm{wt}(f_{i}(b))=\mathrm{wt}(b)-\alpha _{i}$ if $f_{i}(b)\in B
$

\item[(4)]
 $\varepsilon _{i}(e_{i}(b))=\varepsilon _{i}(b)-1$, $\varphi
_{i}(e_{i}(b))=\varphi _{i}(b)+1$ if $e_{i}(b)\in B$

\item[(5)]
 $\varepsilon _{i}(f_{i}(b))=\varepsilon _{i}(b)+1$, $\varphi
_{i}(f_{i}(b))=\varphi _{i}(b)-1$ if $f_{i}(b)\in B$

\item[(6)]
 for $b,b^{\prime }\in B$, $f_{i}(b)=b^{\prime }$ if and only if $%
e_{i}(b^{\prime })=b$

\item[(7)]
 if $\varphi _{i}(b)=-\infty $ for $b\in B$, then $e_{i}(b)=f_{i}(b)=0$.
\end{enumerate}
\smallskip
\par\noindent
The Operators $e_i$ and $f_i$ are referred to as Kashiwara operators.
Let $B$ be an $U_{q}(\mathfrak{g})$-crystal. If $b'=f_{i}(b)$ for $b,b' \in B$, we draw an arrow as $b\overset{i}{\rightarrow }b^{\prime }.$ In this way, $B$ gets endowed with the structure of $I$-colored oriented graph, called the crystal graph. $B$ is connected if its crystal graph is connected. A crystal $B$ is said to be semi-regular if, for all $b\in B$, $i\in I$
\begin{align}\varepsilon_{i}(b) &=\max\{k\in \mathbb{Z}_{\geq 0}\mid e_{i}^k(b)\neq 0\}\\
\varphi_{i}(b) &=\max\{k\in \mathbb{Z}_{\geq 0}\mid f_{i}^k(b)\neq 0\}.
\end{align}

\subsection{}Let $B_1$ and $B_2$ be two crystals. A crystal morphism $\psi:B_1\rightarrow B_2$ is a map $B_1\sqcup\{0\}\rightarrow B_2\sqcup\{0\}$ such that

\begin{enumerate}
\item[(1)]
$\psi(0)=0;$

\item[(2)]
 if $b\in B_1$ and $\psi(b)\in B_2,$ then $\mathrm{wt}(\psi(b))=\mathrm{wt}(b),$ $\varepsilon_i(\psi(b))=\varepsilon_i(b),$ and $\varphi_i(\psi(b))=\varphi_i(b);$

\item[(3)]
 for $b\in B_1,$ $\psi(e_ib)=e_i\psi(b)$ if $\psi(e_ib)\neq 0$ and $e_i\psi(b)\neq 0;$

\item[(4)]
 for $b\in B_1,$ $\psi(f_ib)=f_i\psi(b)$ if $\psi(f_ib)\neq 0$ and $f_i\psi(b)\neq 0.$
\end{enumerate}

\par\noindent
A morphism $\psi:B_1\rightarrow B_2$ is called an embedding if the induced map $\psi:B_1\sqcup\{0\}\rightarrow B_2\sqcup\{0\}$ is an injection and commutes with the Kashiwara operators. Moreover, $\psi$ is called an isomorphism if the induced map is a bijection.

\subsection{}For crystals $B_{1}$ and $B_{2}$, we
introduce the tensor product $B_{2}\otimes B_{1}$, which owns a crystal structure. As a set, it coincides
with the cartesian product $B_{2}\times B_{1}$ and the crystal structure is given
by

\begin{align*}e_{i}(b_{2}\otimes
b_{1}) &=\left\{
\begin{array}{ll}
e_{i}(b_{2})\otimes b_{1} &\mbox{if $\varepsilon _{i}(b_{2})>\varphi _{i}(b_{1})$}\\
b_{2}\otimes e_{i}(b_{1}) &\mbox{if $\varepsilon _{i}(b_{2})\leq \varphi
_{i}(b_{1})$,}
\end{array}
\right.\\
f_{i}(b_{2}\otimes
b_{1}) &=\left\{
\begin{array}{ll}
f_{i}(b_{2})\otimes b_{1} &\mbox{if $\varepsilon _{i}(b_{2})\geq \varphi
_{i}(b_{1})$,}\\
b_{2}\otimes f_{i}(b_{1}) &\mbox{if $\varepsilon _{i}(b_{2})<\varphi _{i}(b_{1})
$.}
\end{array}
\right.\\
\varepsilon_{i}(b_{2}\otimes b_{1}) &=\max(\varepsilon_{i}(b_1),\varepsilon_{i}(b_1)+\varepsilon_{i}(b_2)-\varphi_i(b_1)),\\
\varphi_{i}(b_{2}\otimes b_{1}) &=\max(\varphi_{i}(b_1),\varphi_{i}(b_1)+\varphi_{i}(b_2)-\varepsilon_i(b_1)),\\
\mathrm{wt}(b_2\otimes b_1) &=\mathrm{wt}(b_2)+\mathrm{wt}(b_1).
\end{align*}
Here $0\otimes b$ and $b\otimes 0$ are understood to be $0$. We remark that we use the opposite of the Kashiwara's tensor product convention here, see, e.g., [HK, 4.5].
\par

For semi-regular crystals $B_{i}$ ($i=1,\ldots ,r$), we consider the tensor product crystal $B=B_{r}\otimes B_{r-1}\otimes
\cdots \otimes B_{1} .$ Fix $i\in I.$ To each $b=b_{r}\otimes b_{r-1}\otimes \cdots \otimes
b_{1}\in B,$ we assign a  sequence of the symbols $+$ and $-$:
$$\underset{\varphi _{i}(b_{r})}{\underbrace{+\cdots \cdots +}}\underset{%
\varepsilon _{i}(b_{r})}{\underbrace{-\cdots \cdots -}}\cdots \cdots \cdots
\underset{\varphi _{i}(b_{1})}{\underbrace{+\cdots \cdots +}}\underset{%
\varepsilon _{i}(b_{1})}{\underbrace{-\cdots \cdots -}},$$ called the $i$-signature of $b.$ The
reduced $i$-signature of $b$ is obtained by removing subsequence $-+$ of
the $i$-signature of $b$ repeatedly until it becomes the form: $+\cdots
\cdots +-\cdots \cdots -.$  Then $\varphi _{i}(b)$ equals the number of $+$
in the reduced $i$-signature and $\varepsilon _{i}(b)$ equals the number of $%
-$ in the reduced $i$-signature. If $\varepsilon _{i}(b)=0$, then $e_{i}(b)=0
$, otherwise $e_{i}$ acts on $b_{j_{1}}$ which gives the leftmost $-$ in the
reduced $i$-signature: $$e_{i}(b)=b_{r}\otimes \cdots \otimes e_{i}(b_{j_{1}})\otimes
\cdots \otimes b_{1}.$$
If $\varphi _{i}(b)=0$, then $f_{i}(b)=0$,
otherwise $f_{i}$ acts on $b_{j_{2}}$ which gives the rightmost $+$ in the
reduced $i$-signature: $$f_{i}(b)=b_{r}\otimes \cdots \otimes
f_{a}(b_{j_{2}})\otimes \cdots \otimes b_{1}.$$

\subsection{} In the rest of this paper, we assume that $\mathfrak{g}$ is the affine Kac-Moody algebra of type $A^{(1)}_{n-1}$ with index set $I=\{0,1,\ldots,n-1\}$, and $\mathfrak{g}_{0}$ is the corresponding finite dimensional simple Lie algebra obtained by removing the $0$ node of the Dynkin diagram of $\mathfrak{g}$. Let $U'_{q}(\mathfrak{g})$ be the subalgebra of $U_{q}(\mathfrak{g})$ defined by dropping $q^{d}$ ($d$ is the degree operator) from the generators. Then there exist a natural inclusions $$U_{q}(\mathfrak{g})\supset U'_{q}(\mathfrak{g})\supset U_{q}(\mathfrak{g}_{0}).$$
We can choose
\begin{equation}
P_{\mathrm{cl}}=\mathbb{Z}\Lambda_{0}\oplus\mathbb{Z}\Lambda_{1}\oplus\cdots\oplus\mathbb{Z}\Lambda_{n-1}
\end{equation}
 as the weight lattice with respect to $U'_{q}(\mathfrak{g})$. One can define a notion of $U'_{q}(\mathfrak{g})$-crystal by replacing $P$ in the definition of $U_{q}(\mathfrak{g})$-crystal by $P_{\mathrm{cl}}.$
Set $I_{0}=I\backslash \{0\}.$ Denote the fundamental weights of $\mathfrak{g}_{0}$ by $\overline{\Lambda}_{i}$ ($i\in I_{0}$) and the weight lattice of $U_{q}(\mathfrak{g}_{0})$ by $\overline{P}$, then
\begin{equation}
\overline{P}=\mathbb{Z}\overline{\Lambda}_{1}\oplus\mathbb{Z}\overline{\Lambda}_{2}\oplus\cdots\oplus\mathbb{Z}\overline{\Lambda}_{n-1}.
\end{equation}
 Denote by $\overline{P}^+$ the subset of dominant weights.

Define a map $\mathbb{Z}^n\rightarrow\overline{P}$ by $(a_1,\ldots,a_n)\mapsto (a_1-a_2)\overline{\Lambda}_1+\cdots+(a_{n-1}-a_{n})\overline{\Lambda}_{n-1},$ then it induces an isomorphism $\mathbb{Z}^n/\mathbb{Z}(1,\ldots,1)\isom \overline{P}.$ Let $\epsilon_i$ ($1\leq i\leq n$) be the $i$-th unit vector in  $\mathbb{Z}^n,$ and $\overline{\epsilon}_i$ the image of $\epsilon_i.$  Then we have
\begin{align}
&\overline{\Lambda}_i=\overline{\epsilon}_1+\overline{\epsilon}_2+\cdots+\overline{\epsilon}_i, \text{ for } i\in I_0,\label{3.4.3}\\
&\overline{\epsilon}_1+\overline{\epsilon}_2+\cdots+\overline{\epsilon}_n=0.
\end{align}
For $\lambda\in \mathscr{P}_N$ such that  $l(\lambda) \leq n$, we write it as $\lambda=(\lambda_{1},\ldots,\lambda_{n})\in \mathbb{Z}^n$ with $\lambda_{i}\geq 0.$ Then $\lambda$ determines an element $\sum\limits_{i=1}^{n}\lambda_i\overline{\epsilon}_i=\sum\limits_{i=1}^{n-1}(\lambda_i-\lambda_{i+1})\overline{\Lambda}_i\in\overline{P}^+.$  We also denoted it by $\lambda$ by abuse of the notation.   Let $\boldsymbol{\lambda}=(\lambda',\lambda'')$ be a double partition with $\lambda'=(\lambda'_{1},\ldots,\lambda'_{s}),\lambda''=(\lambda''_{1},\ldots,\lambda''_{t})$ such that $s+t\leq n$. Then $\boldsymbol{\lambda}$ determines an element
\begin{equation*}
\begin{split}
&\sum\limits_{i=1}^{s}\lambda'_i\overline{\epsilon}_i+\sum\limits_{j=1}^{t}\lambda''_j\overline{\epsilon}_{s+j}\\
=&\sum_{i=1}^{s-1}(\lambda'_i-\lambda'_{i+1})\overline{\Lambda}_i+(\lambda'_s-\lambda''_1)\overline{\Lambda}_s+\sum_{j=1}^{t-1}(\lambda''_j-\lambda''_{j+1})\overline{\Lambda}_{s+j}\in \overline{P},
\end{split}
\end{equation*}
which we also denote it by $\Bla.$ Note that $\boldsymbol{\lambda}\in \overline{P}$ is not necessarily a dominant weight.

\subsection{} The Kirillov-Reshetikhin modules (KR modules for short) $W^{(r)}_m$ ($r\in I_0, m\geq 1$) are $U'_{q}(\mathfrak{g})$-modules  first introduced by Kirillov and Reshetikin in \cite{KR2}. As a $U_{q}(\mathfrak{g}_0)$-module, $W^{(r)}_m$ is isomorphic to the irreducible highest weight module $V(\lambda)$ with respect to the highest weight $\lambda=m\overline{\Lambda}_r.$
It is known that $W^{(r)}_m$ has a crystal basis, denoted by $B^{r,m}.$ As a $U_{q}(\mathfrak{g}_0)$-crystal, $B^{r,s}$ is isomorphic to the crystal basis $B(\lambda)$ of the irreducible highest weight module $V(\lambda),$ $\lambda=m\overline{\Lambda}_r.$
\par
Let us focus on $B^{1,m}$ and its $U_{q}(\mathfrak{g}_0)$-crystal. We give a brief discussion on the tableau representation of $B^{1,m}.$ For more details, refer to [O].

\par

First we introduce $B^{1,1}$ . The crystal graph of $B^{1,1}$ is given as
$$\begin{array}{l}
1%
\end{array}%
\overset{1}{\rightarrow }%
\begin{array}{l}
2%
\end{array}%
\overset{2}{\rightarrow }\cdots \overset{n-1}{\rightarrow }%
\begin{array}{l}
n%
\end{array}%
$$ and $\mathrm{wt}(i
)=\epsilon _{i},$ where $\epsilon_i$ is the weight defined in 3.2. Next we consider the general case $B^{1,m}.$
As a set, $B^{1,m}$ is the set of all tableaux of shape $(m)$ with letters $1,2,\ldots, n$, namely, $$B^{1,m}=\{x_1x_2\cdots x_m
\mid 1\leq x_{1}\leq x_{2}\leq \cdots \leq x_{m}\leq n\}.$$
We have a natural embedding of crystals
\begin{equation}
     B^{1,m}\hookrightarrow (B^{1,1})^{\otimes m},\text{ }
x_1x_2\cdots x_m \mapsto x_1\otimes x_2\otimes\cdots \otimes x_m.
\end{equation}

\subsection{} For $\mu=(\mu_1,\ldots,\mu_r)\in \mathscr{P}_N,$ we set $B(\mu)=B^{1,\mu_r}\otimes B^{1,\mu_{r-1}}\otimes \cdots \otimes B^{1,\mu_1}.$ In the following discussion, we view $B(\mu)$ as a $U_{q}(\mathfrak{g}_0)$-crystal, unless otherwise stated.
We denote by $\mathrm{W}(\mu)$ the set of $n$-tuple of rows $(w_n,\ldots,w_1)$ such that $\mathrm{wt}(w_n*\cdots*w_1)=\mu.$ For $\BBw=(w_n,\ldots,w_1)\in\mathrm{W}(\mu),$  define the weight $\mathrm{wt}(\BBw)$ as $\sum_{i}m_i\epsilon_i\in\overline{P},$  where $m_i$($1\leq i\leq n$) is the length of the row $w_i.$ For a given $\BBw\in\mathrm{W}(\mu),$ there exists a unique element $b_{\BBw}=b_r\otimes \cdots\otimes b_1\in B(\mu)$ determined by the condition:
\begin{align}
&\text{for any } i,k\in\mathbb{Z}_{>0}, \text{ the letter } i \text{ appears in } b_k
\text{ if and only if the letter } k \\ &\text{appears in } w_i.\nonumber
\end{align}
We define a map $\Psi:\mathrm{W}(\mu)\rightarrow B(\mu)$ by $\BBw\mapsto b_{\BBw}.$ It is easy to see that $\Psi$ gives a bijection, and preserves the weight, namely, we have
\begin{equation}\label{3.4}
\mathrm{wt}(b_{\BBw})=\mathrm{wt}(\BBw).
\end{equation}

\begin{exa}
\emph{Assume that} $\mu=(3,2,2,1)$ \emph{and }$n=5$\emph{.} \emph{Let} $\boldsymbol{w}=(-,4,1,113,223)\in \mathrm{W}(\mu).$  \emph{Then}
$b_{\boldsymbol{w}}=4\otimes12\otimes11\otimes223.$
\end{exa}

Let $(w_{2},w_{1})$ be a pair of rows with $w_{1}=x_{p}x_{p-1}\cdots x_{1}$ and $%
w_{2}=y_{q}y_{q-1}\cdots y_{1}$. For $1\leq t\leq q,$ we define an array $T(t)$ by
\begin{equation}
T(t)=
\begin{array}{llllllll}
&  & x_{p} & x_{n-1} & \cdots  & \cdots  & \cdots  & x_{1} \\
y_{q} & \cdots  & y_{t} & y_{t-1} & \cdots  & y_{1} &  &
\end{array},
\end{equation} and set $$T(0)=%
\begin{array}{llllll}
&  &  & x_{p} & \cdots  & x_{1} \\
y_{q} & \cdots  & y_{1} &  &  &
\end{array}.%
$$
Let $t_{0}$ be the largest
number such that $T(t_{0})$ is a tableau. If $w_{i}$ is the empty word ($i=1$ or $2$), we set $t_{0}=0$ and regard $T(t_0)$ as a tableau with two rows, whose $i$-th row is empty. Put $T_{(w_{2},w_{1})}=T(t_{0}).$

\begin{exa}
$w_{2}=122,w_{1}=114$, \emph{we have}
\begin{align*}
&T(0)=\begin{array}{llllll}
&  &  & 1 & 1 & 4 \\
1 & 2 & 2 &  &  &
\end{array},
&T(1)=%
\begin{array}{lllll}
&  & 1 & 1 & 4 \\
1 & 2 & 2 &  &
\end{array}\\
&T(2)=%
\begin{array}{llll}
& 1 & 1 & 4 \\
1 & 2 & 2 &
\end{array},
&T(3)=%
\begin{array}{lll}
1 & 1 & 4 \\
1 & 2 & 2%
\end{array}.
\end{align*}
\emph{In this case}, $t_{0}=2$ \emph{and} $T_{(w_2,w_1)}=T(2)=%
\begin{array}{llll}
& 1 & 1 & 4 \\
1 & 2 & 2 &
\end{array}.
$
\end{exa}

If there exists an upper jeu de taquin position $e$ in the first row of $T_{(w_{2},w_{1})}$  and assume that $w'_i$ ($i=1,2$) is the $i$-th row of $\mathrm{jdt}_e(T_{(w_2,w_1)}),$ we set $\mathrm{e}(w_2,w_1)=(w'_2,w'_1).$ If there exists no upper jeu de taquin position in the first row, write $\mathrm{e}(w_2,w_1)=0.$
Similarly, if there exists a lower jeu de taquin position $f$ in the first row of $T_{(w_{2},w_{1})}$  and assume that $w''_i$ ($i=1,2$) is the $i$-th row of $\mathrm{jdt}_f(T_{(w_2,w_1)}),$ we set $\mathrm{f}(w_2,w_1)=(w''_2,w''_1).$ If there exists no lower jeu de taquin position in the first row, write $\mathrm{f}(w_2,w_1)=0.$

\begin{Def}
\emph{For $i\in I_0,$ we define the Kashiwara operators $e_i,f_i:\mathrm{W}(\mu)\rightarrow \mathrm{W}(\mu)\sqcup\{0\}$  as follows. Let $\boldsymbol{w}=(w_n,\ldots,w_1)\in \mathrm{W}(\mu).$}
\begin{enumerate}
\item[(1)]
\emph{If $\mathrm{e}(w_{i+1},w_i)=(w'_{i+1},w'_i)\neq 0$,
define $e_i(\boldsymbol{w})=(w'_n,\ldots,w'_1)$, where $w'_k=w_k$ for $k\neq i,i+1$; if $\mathrm{e}(w_{i+1},w_i)=0$, define $e_i(\boldsymbol{w})=0.$}
\item[(2)]
\emph{If $\mathrm{f}(w_{i+1},w_i)=(w'_{i+1},w'_i)\neq 0$,
define $f_i(\boldsymbol{w})=(w'_n,\ldots,w'_1)$, where $w'_k=w_k$ for $k\neq i,i+1$; if $\mathrm{f}(w_{a+1},w_a)=0$, define $f_i(\boldsymbol{w})=0.$}
\end{enumerate}
\end{Def}

\begin{prop}\label{thm 3.3.3}
 The map $\Psi$ commutes with the Kashiwara operators, namely,
\begin{equation}\label{F 3.11}
e_ib_{\BBw}=b_{e_i(\BBw)} \text{   and   } f_ib_{\BBw}=b_{f_i(\BBw)}
\end{equation}
for $i\in I_0$ and $\BBw\in \BW(\mu)$.
\end{prop}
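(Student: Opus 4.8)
The plan is to reduce (\ref{F 3.11}) to a two-row ($\mathfrak{sl}_2$) statement and then match the jeu de taquin combinatorics against the signature rule of 3.3. I first record the dictionary built into $\Psi$: writing $b_{\BBw}=b_r\otimes\cdots\otimes b_1$, the multiplicity of the letter $i$ in the row $b_k$ equals the multiplicity of the letter $k$ in the row $w_i$. Since each $b_k$ is a single row, its $i$'s precede its $(i+1)$'s, so $\varphi_i(b_k)$ is the number of $k$'s in $w_i$ and $\varepsilon_i(b_k)$ is the number of $k$'s in $w_{i+1}$. Thus the whole $i$-signature of $b_{\BBw}$ is determined by the pair $(w_{i+1},w_i)$ alone, while $e_i,f_i$ on $\mathrm{W}(\mu)$ alter only $w_i,w_{i+1}$ through the slide on $T_{(w_{i+1},w_i)}$. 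Both operators change exactly one box (an $i+1$ becomes an $i$ for $e_i$, equivalently one box moves from $w_{i+1}$ up to $w_i$), so it remains only to show that the factor $b_{k_0}$ selected by the reduced signature is the one whose letter the slide relocates, and to match the vanishing conditions. Moreover, since the lower slide inverts the upper slide and $f_i$ inverts $e_i$ (axiom~(6) of the crystal structure), it suffices to treat $e_i$; the statement for $f_i$ then follows formally.

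Next I would set up the two parallel bookkeeping devices for the pair of rows, writing $p$ and $q$ for the lengths of $w_i$ and $w_{i+1}$. On the crystal side the reduced $i$-signature is computed by cancelling $-+$ pairs in the word $\prod_{k=r}^{1}(+^{\varphi_i(b_k)}-^{\varepsilon_i(b_k)})$, leaving $\varphi_i(b_{\BBw})$ plusses and $\varepsilon_i(b_{\BBw})$ minuses, with $e_i$ acting on the factor carrying the leftmost surviving $-$. On the tableau side I would first prove the counting identity that the number $o$ of columns of $T(t_0)=T_{(w_{i+1},w_i)}$ in which both rows meet equals the number of cancelled pairs, so that $\varphi_i(b_{\BBw})=p-o$ and $\varepsilon_i(b_{\BBw})=q-o$; this is a ballot-type statement about the maximal rigid overlap and is where the definition of $t_0$ enters. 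In particular $\varepsilon_i(b_{\BBw})=0$ exactly when $w_{i+1}$ does not protrude, i.e.\ when there is no upper jeu de taquin position in $T(t_0)$, which gives $e_ib_{\BBw}=0\iff e_i\BBw=0$ and likewise for $f_i$.

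The heart of the proof, and the step I expect to be the main obstacle, is to show that in the non-vanishing case the upper slide relocates precisely the letter $k_0$ picked out by the signature. The subtlety is that the rigid overlap defining $T(t_0)$ does \emph{not} realize the $-+$ bracketing: it can pair a protruding bottom box with the wrong top box, so one cannot simply read off $k_0$ as the value of the geometrically unmatched box. Rather, the pawn enters at the leftmost admissible position and, by bumping its way across the top row, outputs the signature-correct value; establishing this requires tracking the slide path explicitly. I would carry this out by induction on the number of columns of $T(t_0)$, peeling off the leftmost column and invoking the column-bumping monotonicity of Lemma~\ref{lem 1.2.1} together with Lemma~\ref{lem 1.2.3} and Proposition~\ref{prop 1.2.4} to control how the transferred value propagates, using the running $i$-signature as the inductive invariant (and Theorem~\ref{thm 1.4.3}, $w(T)\sim_K w(\mathrm{jdt}_e(T))$, to keep the reading words aligned). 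Once the transferred letter is identified with $k_0$, comparing contents on both sides yields $e_ib_{\BBw}=b_{e_i\BBw}$, and the inverse-slide remark finishes $f_i$.
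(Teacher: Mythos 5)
Your reduction and dictionary coincide with the paper's: the paper also passes to the embedding $B(\mu)\hookrightarrow (B^{1,1})^{\otimes N}$, orders the letter-positions of $w_i$ and $w_{i+1}$ so that the $i$-signature of $\widetilde{b}_{\BBw}$ becomes a signature attached to the two-row tableau $T_{(w_{i+1},w_i)}$, and reduces the whole statement to comparing the upper slide on that tableau with the signature rule. The problem is that the step you yourself call the heart of the proof --- that the box relocated by the slide is exactly the factor carrying the leftmost surviving $-$ --- is never actually established, and the toolkit you name for it cannot establish it. Lemma 1.2.1, Lemma 1.2.3 and Proposition 1.2.4 are statements about the row indices $R(a\rightarrow T)$ produced by \emph{column insertion}; in the paper they are used only to construct the RS-type bijection $\widetilde{\Gamma}$ of Section 1.6, and they say nothing about the path of a jeu de taquin pawn inside a two-row skew tableau. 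Likewise Theorem 1.4.3 (Knuth invariance of reading words under slides) can never tell you \emph{which} box moves, only that reading words stay equivalent, so ``using the running $i$-signature as the inductive invariant'' is content-free until one proves precisely the statement at issue. The paper's proof, by contrast, is fully explicit at this point: it introduces $J=\{t\le m\mid a_{t^-}\le a_{(p-(m-t+1))^+}\}$, locates the column where the pawn goes down, splits the $i$-signature into the part below $\varepsilon_{t_0^-}$, the sign $\varepsilon_{t_0^-}$ itself, and the part above, eliminates the unwanted signs in the two parts using the extreme configurations (a) and (b), and closes with an order-preserving reindexing $h:S_{\BBw}\rightarrow S_{e_i(\BBw)}$. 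Some argument of this explicitness is unavoidable, and it is genuinely delicate: for example for $w_i=124$, $w_{i+1}=123$ the pawn goes down immediately at its entry column and the letter moved is $a_{(m+1)^-}$ (here the bottom letter $1$), not a letter paired by the maximal overlap --- exactly the trap your ``wrong top box'' remark anticipates but your sketch does not resolve.

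Two further points. Your counting identity (cancelled $-+$ pairs equal the overlap $t_0$, hence $\varphi_i(b_{\BBw})=p-t_0$ and $\varepsilon_i(b_{\BBw})=q-t_0$), on which your vanishing criteria rest, is likewise asserted rather than proved; it is true, but it is an output of the descent-column analysis, not an input one may quote. And your formal deduction of the $f_i$-statement from the $e_i$-statement needs the fact that $\mathrm{e}$ and $\mathrm{f}$ are mutually inverse on $\mathrm{W}(\mu)$: since $\mathrm{f}$ is defined by a slide on $T_{(w'_{i+1},w'_i)}$, i.e.\ on the \emph{maximal-overlap} placement of the new pair of rows, you must check that the configuration produced by the upper slide is again in maximal-overlap position before the reverse slide can be invoked. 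That is a short argument, but it is an argument; the paper sidesteps it by running the symmetric analysis for $f_i$ directly. As it stands, then, the proposal is a correct frame around a missing core.
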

\begin{proof} We only prove the first assertion of (\ref{F 3.11}). The second one can be proved similarly.
For $b=b_r\otimes \cdots b_1$ with $b_k=x_{1,k}\cdots x_{\mu_k,k},$ $1\leq k\leq r,$ let
\begin{equation}
\widetilde{b}=x_{1,r}\otimes \cdots x_{\mu_r,r}\otimes\cdots \otimes x_{1,1}\otimes \cdots\otimes x_{\mu_1,1 }\in (B^{1,1})^{\otimes N}.
\end{equation}
 Then the map $b\mapsto \widetilde{b}$ gives a crystal embedding $B(\mu)\hookrightarrow (B^{1,1})^{\otimes N}.$
In order to prove (\ref{F 3.11}), it is enough to show
\begin{equation}\label{F 3.12(1)}
e_i\widetilde{b}_{\BBw}=\widetilde{b}_{e_i(\BBw)},
\end{equation} for $i\in I_0$ and $\BBw\in \mathrm{W}(\mu).$

We show (\ref{F 3.12(1)}). For $\BBw=(w_n,\ldots,w_1)\in \mathrm{W}(\mu)$ with $w_k=a_{\ell_k,k}a_{\ell_k-1,k}\cdots a_{1,k},$ $1\leq k\leq n,$ let
$S_{w_k}=\{(x,k)\mid 1\leq x\leq \ell_k\}.$ Set $$S_{\BBw}=\coprod_{1\leq k\leq n}S_{w_k}.$$ Define a total order $<$ on $S_{\BBw}$ by
\begin{enumerate}
\item[(1)]
 $(x_1,k)<(x_2,k)$ if $x_1<x_2;$

\item[(2)]
 $(x_1,k_1)< (x_2,k_2)$ if $k_1< k_2$ and $a_{x_1,k_1}=a_{x_2,k_2};$

\item[(3)]
 $(x_1,k_1)< (x_2,k_2)$ if $k_1\neq k_2$ and $a_{x_1,k_1}>a_{x_2,k_2}.$
\end{enumerate}
We express the elements in $S_{\BBw}$ as $c_1<c_2<\cdots<c_N$ along this total order. Then it is easy to check that $\widetilde{b}_{\BBw}$ can be written as
\begin{equation}\label{3.8}
\widetilde{b}_{\BBw}=y_{c_1}\otimes y_{c_2} \otimes \cdots \otimes y_{c_N},
\end{equation}
 where $y_{c_j}=k$ if $c_j=(x,k)\in S_{\BBw}.$
We fix $i$ and $i+1$, and write $x^+=(x,i)$ for $1\leq x\leq p$ and  $y^-=(y,i+1)$ for $1\leq y\leq q,$ where $p=\ell_i,$ $q=\ell_{i+1}.$ Write $a_{x^+}=a_{x,i}$ and $b_{y^-}=b_{y,i+1}.$
Then the restriction of the order on the set $$S_{\BBw,i}\sqcup S_{\BBw,i+1}=\{1^+,2^+,\ldots,p^+,1^-,2^-,\ldots,q^-\}$$ satisfies the following properties
\begin{align*}
x^+>y^+ &\Longleftrightarrow x>y,\\
x^->y^- &\Longleftrightarrow x>y,\\
x^+>y^- &\Longleftrightarrow a_{x^+}<a_{y^-},\\
y^->x^+ &\Longleftrightarrow a_{x^+}\geq a_{y^-}.
\end{align*}
Note, for $x\in B^{1,1}$, that the $i$-signature is $+$  (resp. $-$, empty) if $x=i$ (resp. $x=i+1$, otherwise). Hence it is clear that the $i$-signature of $\widetilde{b}$ is given by
\begin{equation}\label{F 3.13(1)}
\varepsilon_{k_1}\varepsilon_{k_2}\ldots \varepsilon_{k_{p+q}},
\end{equation} where if we denote the elements in $S_{w_i}\sqcup S_{w_{i+1}}$ as $k_1<k_2<\cdots<k_{p+q}$ following the total order, then $\varepsilon_{k_j}$ is determined as
\begin{equation*}
\varepsilon_{k_j} = \begin{cases}
                +   &\quad\text{ if $k_j\in S_{\BBw,i}$}, \\
                -        &\quad\text{ if $k_j\in S_{\BBw,i+1}$. }
        \end{cases}
\end{equation*}
We call (\ref{F 3.13(1)}) the signature associated to the tableau $T_{(w_{i+1},w_i)}.$

First we consider a special case.

\begin{enumerate}

\item[(a)]
If $T_{(w_{i+1},w_i)}$ is of the form
$$\begin{array}{llllll}
  &  & a_{p^+} & a_{(p-1)^+} & \cdots & a_{1^+} \\
a_{q^-} & \cdots & a_{p^-} & a_{(p-1)^-} & \cdots & a_{1^-}
\end{array} ,$$ then the reduced $i$-signature of $\widetilde{b}_{\BBw}$ has no $+$.

\end{enumerate}
In fact, since $a_{j^-}>a_{j^+},$ we have $j^+>j^-$ for each $1\leq j\leq p$ and all  $+$ in the $i$-signature appear in the pairs $(\varepsilon_{i^-},\varepsilon_{i^+}),$ hence all the $+$ in the $i$-signature can be removed.
Similarly, we have

\begin{enumerate}

\item[(b)]
 If $T_{(w_{i+1},w_i)}$ is of the form
$$\begin{array}{llllll}
a_{p^+} & a_{(p-1)^+} & \cdots & a_{(p-q+1)^+} & \cdots & a_{1^+}\\
a_{q^-} & a_{(q-1)^-} & \cdots & a_{1^-}
\end{array},$$ then the reduced $i$-signature has no $-.$

\end{enumerate}
In the general case, assume that $T_{(w_{i+1},w_i)}$ is written as $$T_{(w_{i+1},w_i)}=
\begin{array}{llllllllll}
  &   &   &  a_{p^+} & a_{(p-1)^+} & \cdots & a_{(p-m+1)^+} & a_{(p-m)^+} & \cdots & a_{1^+} \\
a_{q^-} & a_{(q-1)^-} & \cdots & a_{m^-} & a_{(m-1)^-} & \cdots & a_{1^-}
\end{array}.$$
Set $J=\{t\mid 1\leq t\leq m, \text{ } a_{t^-}\leq a_{(p-(m-t+1))^+}\}.$ We consider the case $J\neq\emptyset$ or $J=\emptyset$ separately. \

\par\smallskip\noindent\textbf{Case 1.}  $J\neq\emptyset.$

Let $t_0$ be the largest number contained in $J.$ We consider the procedure of the jeu de taquin slide of $T_{(w_{i+1},w_i)}$ into the upper jeu de taquin position of the first row. Then the last step before the pawn $\clubsuit$ going down is of the form
$$\begin{array}{lllllllllll}
   &   & a_{p^+} & a_{(p-1)^+} & \cdots & a_{(p-(m-t_0))^+} & \clubsuit & a_{(p-(m-t_0+1))^+} & \cdots & \cdots & a_{1^+}\\
a_{q^-} & \cdots & a_{(m+1)^-} & a_{m^-} & \cdots & a_{(t_0+1)^-} & a_{t^{-}_{0}} & a_{(t_0-1)^-} & \cdots & a_{1^-}
\end{array}.$$
We divide the $i$-signature of $\widetilde{b}_{\BBw}$ into three parts
$$\underset{\text{part 1}}{\underbrace{\varepsilon_{k_1}\cdots \cdots\varepsilon_{k_s}}} \varepsilon_{t_{0}^{-}}      \underset{\text{part 2}}{\underbrace{\varepsilon_{k_{s+2}}\cdots \cdots\varepsilon_{k_{p+q}}}}.$$
Note that for $p-(m-t_0)\leq j\leq p,$ we have $j^+>t_0^-$ since $a_{j^+}<a_{((m+1)-(p-j))^-}\leq a_{t_0^-}.$ Also for $1\leq j\leq p-(m-t_0+1),$ we have $j^+<t_0^-$ since $a_{j^+}\geq a_{(p-m+t_0-1)^+}\geq a_{t_0^-}.$
It is easy to see that $$\{k_1,k_2,\ldots,k_s\}=\{1^+,2^+,\ldots,(p-(m-t_0+1))^+, 1^-,2^-,\ldots,(t_0-1)^-\},$$ and $\varepsilon_{k_1}\cdots \varepsilon_{k_s}$ is the signature associated to the tableau
$$\begin{array}{llll}
a_{(p-(m-t_0+1))^+} & \cdots & \cdots & a_{1^+}\\
a_{(t_0-1)^-} & \cdots & a_{1^-}
\end{array}.$$
Similarly, $\varepsilon_{k_{s+2}}\cdots \cdots\varepsilon_{k_{p+q}}$ is the signature associated to the tableau
$$
\begin{array}{llllll}
    &    & a_{p^+} & a_{(p-1)^+} & \cdots & a_{(p-(m-t_0))^+}\\
a_{q^-} & \cdots & a_{(m+1)^-} & a_{m^-} & \cdots & a_{(t_0+1)^-}
\end{array}.$$
By (a) and (b), we know that there exists no $-$ in the reduced signature of part 1 and no $+$ in the reduced signature of part 2. Hence, $\varepsilon_{t_{0}^{-}}$ is the left most $-$ in the reduced $i$-signature of $\widetilde{b}_{\BBw}$. This implies that
\begin{equation}\label{nF 3.4}
e_i\widetilde{b}_{\BBw}=y_{c_1}\otimes y_{c_2} \otimes \cdots \otimes e_iy_{(t_0,i+1)}\otimes\cdots\otimes y_{c_N}.
\end{equation}

We write $e_i(\BBw)$ as $e_i(\BBw)=(w'_n,\cdots,w'_1)$ with $w'_k=b_{\ell'_k,k}b_{\ell'_k-1,k}\cdots b_{1,k}.$ Then $S_{w'_k}=\{(x,k)\mid 1\leq x\leq \ell'_k\}$ is determined as before.
It is easy to check that
\begin{equation}
\ell'_k = \begin{cases}
                \ell_k+1       &\quad\text{ if $k=i$}, \\
                \ell_k-1        &\quad\text{ if $k=i+1$. }\\
                \ell_k            &\quad\text{ otherwise, }
        \end{cases}
\end{equation}
and
\begin{equation}\label{nF 3.5}
b_{x,k} = \begin{cases}
                a_{t_0^-}       &\quad\text{ if $k=i, x=p-(m-t_0)$}; \\
                a_{(x-1)^+}          &\quad\text{ if $k=i, p-(m-t_0)\leq x\leq \ell'_{i}; $ }\\
                a_{(x+1)^-}            &\quad\text{ if $k=i+1,t_0\leq x\leq \ell'_{i+1}; $}\\
                a_{x,k}                     &\quad\text{ otherwise.}
        \end{cases}
\end{equation}
As in (\ref{3.8}), $\widetilde{b}_{e_i(\BBw)}$ can be written as
$$\widetilde{b}_{e_i\BBw}=y'_{c'_1}\otimes y'_{c'_2} \otimes \cdots \otimes y'_{c'_N},$$ where $c'_j\in S_{e_i\BBw}$, $c'_1<c'_2<\cdots<c'_N.$
Here $y'_{c'_j}=k$ if $c'_j=(x,k)\in S_{e_i(\BBw)}$ when we write the elements in $S_{e_i(\BBw)}$ as $c'_1<c'_2<\cdots<c'_N.$
Define a map $h:S_{\BBw}\rightarrow S_{e_i(\BBw)}$ by
\begin{equation*}
h(x,k) = \begin{cases}
                (x+1,i)     &\quad\text{ if $k=i, p-(m-t_0)\leq x\leq \ell_i$}; \\
                (p-(m-t_0),i)          &\quad\text{ if $k=i+1, x=t_0; $ }\\
                (x-1,i+1)           &\quad\text{ if $k=i+1,t_0+1\leq x\leq \ell_{i+1}; $}\\
                (x,k)                    &\quad\text{ otherwise.}
        \end{cases}
\end{equation*}
By a direct computation using (\ref{nF 3.5}), we see that $h$ is the unique bijection such that $h(c_j)<h(c_k)$ for $1\leq j<k\leq N.$
Hence we have
\begin{eqnarray*}
\widetilde{b}_{e_i\BBw} &=& y'_{c'_1}\otimes y'_{c'_2} \otimes \cdots \otimes y'_{c'_N} \\
&=& y'_{h(c_1)}\otimes y'_{h(c_2)} \otimes \cdots \otimes y'_{h(c_N)}.
\end{eqnarray*}
Noticing that
\begin{equation*}
y'_{h(c_i)} = \begin{cases}
                y_{c_i}   &\quad\text{ if $c_i\neq t_0^-$}, \\
                y_{c_i}-1        &\quad\text{ if $c_i=t_0^-$, }
        \end{cases}
\end{equation*}
we obtain
\begin{equation}\label{nF 3.6}
\widetilde{b}_{e_i(\BBw)}=y_{c_1}\otimes y_{c_2} \otimes \cdots \otimes (y_{(t_0,i+1)}-1)\otimes\cdots\otimes y_{c_N}.
\end{equation}
Since $e_iy_{(t_0,i+1)}=y_{(t_0,i+1)}-1,$ we obtain (\ref{F 3.12(1)}), from (\ref{nF 3.4}) and (\ref{nF 3.6}).

\par\smallskip\noindent\textbf{Case 2}  $J=\emptyset.$

In this case, $T_{(w_{i+1},w_i)}$ must be of the form
$$\begin{array}{llllll}
   &   &   & a_{p^+} & \cdots & a_{1^+}\\
a_{q^-} & \cdots & a_{(p+1)^-} & a_{p^-} & \cdots & a_{1^-}
\end{array}.$$
Then the jeu de taquin operation can be written as
$$\begin{array}{lllllll}
    &    & a_{p^+} & a_{(p-1)^+} & \cdots & a_{1^+} & \clubsuit\\
a_{q^-} & \cdots & a_{(p+1)^-} & a_{p^-} & \cdots & a_{2^-} & a_{1^-}
\end{array}.$$ It is clear that $1^-$ is the smallest element in $S_{\BBw,i}\sqcup S_{\BBw,i+1}$. Divide the $i$-signature into two parts
$$\varepsilon_{1^{-}}      \underset{\text{part 2}}{\underbrace{\varepsilon_{k_2}\cdots \cdots\varepsilon_{k_{p+q}}}}.$$ Note that part 2 is the signature associated to the tableau
$$\begin{array}{llllll}
    &    & a_{p^+} & a_{(p-1)^+} & \cdots & a_{1^+}\\
a_{q^-} & \cdots & a_{(p+1)^-} & a_{p^-} & \cdots & a_{2^-}
\end{array}.$$ By (a), there exists no $+$ in the reduced signature of part 2, thus $\varepsilon_{1^-}$ is the left most $-$ in the reduced $i$-signature of $\widetilde{b}_{\BBw}.$ By a similar argument as in the case 1, we obtain (\ref{F 3.12(1)}) also in this case. Thus the proposition is proved.
\end{proof}

For $i\in I_0,$ $\BBw\in \mathrm{W}(\mu)$, we define $\varepsilon_i(\BBw), \varphi_i(\BBw)\in \mathbb{Z}_{\geq 0}$ by
\begin{align}
\varepsilon_i(\BBw) &=\max\{k\mid e^{k}_i\BBw\neq 0\}\label{F 3.13}\\
\varphi_i(\BBw) &=\max\{k\mid f^{k}_i\BBw\neq 0\}.\label{F 3.14}
\end{align}
Then we have the following result as a corollary of Proposition \ref{thm 3.3.3}.
\begin{thm}\label{cor 3.3.4}
The maps $e_i,f_i: \BW(\mu) \rightarrow \BW(\mu)\sqcup\{0\}$, $\varepsilon_i,\varphi_i:\BW(\mu)\rightarrow \mathbb{Z}_{\geq 0}$ ($i\in I_0$) and $\mathrm{wt}: \BW(\mu)\rightarrow \overline{P}$ define a $U_{q}(\mathfrak{g}_0)$-crystal structure on $\BW(\mu)$. Moreover, the bijection $\Psi$ gives a $U_{q}(\mathfrak{g}_0)$-crystal isomorphism.
\end{thm}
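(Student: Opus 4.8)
The plan is to transport the crystal structure from $B(\mu)$ to $\mathrm{W}(\mu)$ along the bijection $\Psi$, taking Proposition \ref{thm 3.3.3} as the central input. As a tensor product of the KR crystals $B^{1,\mu_i}$, the set $B(\mu)$ is a semi-regular $U_q(\mathfrak{g}_0)$-crystal; in particular the axioms (1)--(7) of 3.1 hold in $B(\mu)$, and $\varepsilon_i(b)=\max\{k: e_i^k(b)\neq 0\}$, $\varphi_i(b)=\max\{k: f_i^k(b)\neq 0\}$ for all $b\in B(\mu)$. Each of these facts may be assumed from the earlier material.

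First I would upgrade the commutation relation (\ref{F 3.11}) to the statement that $\Psi$ preserves $\varepsilon_i$ and $\varphi_i$. Writing $b_0=0$, iteration of (\ref{F 3.11}) gives $e_i^k b_{\BBw}=b_{e_i^k(\BBw)}$; since $\Psi$ is a bijection with $b_{\BBw}=0 \Longleftrightarrow \BBw=0$, we obtain $e_i^k(\BBw)\neq 0 \Longleftrightarrow e_i^k(b_{\BBw})\neq 0$. Combining this with the defining formula (\ref{F 3.13}) and the semi-regularity of $B(\mu)$ yields $\varepsilon_i(\BBw)=\varepsilon_i(b_{\BBw})$, and symmetrically $\varphi_i(\BBw)=\varphi_i(b_{\BBw})$ via (\ref{F 3.14}). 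Together with (\ref{3.4}) this shows that $\Psi$ matches all three of $\mathrm{wt},\varepsilon_i,\varphi_i$ on the two sides.

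Next I would verify axioms (1)--(7) for $\mathrm{W}(\mu)$ by pulling each one back through $\Psi$. Axiom (1) is immediate, since every term of $\langle h_i,\mathrm{wt}(\BBw)\rangle=\varphi_i(\BBw)-\varepsilon_i(\BBw)$ equals its counterpart at $b_{\BBw}$, where the identity holds. For (2)--(5), whenever $e_i(\BBw)\neq 0$ one computes, e.g., $\mathrm{wt}(e_i\BBw)=\mathrm{wt}(b_{e_i\BBw})=\mathrm{wt}(e_i b_{\BBw})=\mathrm{wt}(b_{\BBw})+\alpha_i=\mathrm{wt}(\BBw)+\alpha_i$ using (\ref{F 3.11}), (\ref{3.4}), and the corresponding axiom in $B(\mu)$; the $\varepsilon_i,\varphi_i$ shifts in (4) and (5) transport in the same way. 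Axiom (6), that $f_i(\BBw)=\BBw' \Longleftrightarrow e_i(\BBw')=\BBw$, follows by applying $\Psi$ and using that it is a bijection intertwining $e_i$ and $f_i$. Axiom (7) is vacuous, as $\varepsilon_i,\varphi_i$ take values in $\mathbb{Z}_{\geq 0}$ by (\ref{F 3.13}) and (\ref{F 3.14}).

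Having equipped $\mathrm{W}(\mu)$ with a $U_q(\mathfrak{g}_0)$-crystal structure, I would conclude that $\Psi$ is an isomorphism: it is a bijection (noted at its definition), preserves $\mathrm{wt},\varepsilon_i,\varphi_i$ by the first step, and commutes with $e_i,f_i$ by Proposition \ref{thm 3.3.3}, which are exactly the conditions of 3.2. I do not expect a genuine obstacle here, since all the combinatorial content is already contained in Proposition \ref{thm 3.3.3}; the one point requiring care is the passage from the commutation of the Kashiwara operators to the equality of $\varepsilon_i$ and $\varphi_i$, where the semi-regularity of $B(\mu)$ and the max-definitions (\ref{F 3.13}), (\ref{F 3.14}) must be invoked rather than taken for granted.
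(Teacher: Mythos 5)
Your proposal is correct and follows essentially the same route as the paper: the paper's proof likewise reduces everything to showing that $\mathrm{wt}$, $e_i$, $f_i$, $\varepsilon_i$, $\varphi_i$ all commute with the bijection $\Psi$, citing (\ref{3.4}), Proposition \ref{thm 3.3.3}, and the definitions (\ref{F 3.13})--(\ref{F 3.14}). You merely spell out what the paper leaves implicit — the iteration $e_i^k b_{\BBw}=b_{e_i^k(\BBw)}$, the role of semi-regularity of $B(\mu)$ in matching the max-definitions, and the axiom-by-axiom transport — all of which is sound.
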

\begin{proof}
Let $\Psi: \mathrm{W}(\mu)\rightarrow B(\mu)$ be the bijection given in 3.4. In order to prove the theorem, it is enough to show that $e_i,f_i,\varepsilon_i,\varphi_i$ and $\mathrm{wt}$ commute with $\Psi.$ By (\ref{3.4}), $\mathrm{wt}$ commutes with $\Psi.$ By Proposition \ref{thm 3.3.3}, $e_i$ and $f_i$ commute with $\Psi.$ Then by (\ref{F 3.13}) and (\ref{F 3.14}), $\varepsilon_i$ and $\varphi_i$ commute with $\Psi.$ The theorem is proved.
\end{proof}

\subsection{}Let $\lambda,$ $\rho$ be partitions with $l(\lambda)\leq n$ such that $\lambda\supset\rho$ and $|\lambda|-|\rho|=N.$ Then there exists a natural embedding $\mathrm{Tab}(\lambda-\rho,\mu)\hookrightarrow \mathrm{W}(\mu),$ sending the tableau $T$ to the element $\BBw_{T}=(w_n,\ldots,w_1),$ where $w_i$ is the $i$-th row of $T.$ We write $b_T=\Psi(\BBw_T)\in B(\mu)$ for $T\in \mathrm{Tab}(\lambda-\rho,\mu).$

\begin{lem}\label{lem20151125}Assume that the vertical slide of the form
\begin{equation*}\begin{array}{llllllllllllll}
  &   &   & a_1 & a_2 & \cdots & a_p & \clubsuit & a_{p+1} & \cdots & a_{n-t} & \cdots & a_m\\
  & b_1 & \cdots & b_t & b_{t+1} & \cdots & b_{t+p-1} & b_{t+p} & b_{t+p+1} & \cdots & b_n\\
\end{array}\rightarrow
\end{equation*}
$$\begin{array}{llllllllllllll}
  &   &   & a_1 & a_2 & \cdots & a_p & b_{t+p} & a_{p+1} & \cdots & a_{n-t} & \cdots & a_m\\
  & b_1 & \cdots & b_t & b_{t+1} & \cdots & b_{t+p-1} & \clubsuit & b_{t+p+1} & \cdots & b_n\\
\end{array}$$
occurs in between the $i$-th row and the $(i+1)$-th row in a jeu de taquin operation for a tableau. Put
$$\left\{\begin{array}{ll}
  w_1=a_1\cdots a_m,$ $w_2=b_1\cdots b_n,\\
  w_2=b_1\cdots b_n,
\end{array}
\right. \text{ and } \left\{\begin{array}{ll}
  w'_1=a_1\cdots a_pb_{t+p}a_{p+1}\cdots a_m,\\
  w'_2=b_1\cdots b_{t+p-1}b_{t+p+1}\cdots b_n.
\end{array}
\right.$$
 Then $\mathrm{e}(w_2,w_1)=(w'_2,w'_1).$
\end{lem}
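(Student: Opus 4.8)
The plan is to compute $\mathrm{e}(w_2,w_1)$ straight from its definition and check that the output equals $(w_2',w_1')$. Recall that to evaluate $\mathrm{e}(w_2,w_1)$ one first builds the maximal tableau $T_{(w_2,w_1)}=T(t_0)$ from the two bare rows $w_1=a_1\cdots a_m$ and $w_2=b_1\cdots b_n$, then places a hole at the upper jeu de taquin position in its first row and performs the slide $\mathrm{jdt}_e$. So the argument naturally splits into three tasks: read off the inequalities forced by the hypothesis, pin down $T_{(w_2,w_1)}$ and the slide position, and trace the single slide. First I would extract the inequalities. Because the displayed ``before'' configuration is a genuine intermediate state of a jeu de taquin operation, it is a valid filling away from the hole; column-strictness of the already-settled left portion gives $a_j<b_{t+j-1}$ for $1\le j\le p$, while the untouched right portion gives $a_j<b_{t+j}$ for $j\ge p+1$. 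Since the move is vertical, i.e.\ the letter $b_{t+p}$ sitting below the pawn is the one that rises, the rule defining $\mathrm{jdt}$ together with validity of the ``after'' filling forces $a_p\le b_{t+p}\le a_{p+1}$; in particular $a_{p+1}\ge b_{t+p}$.

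Next I would identify $T_{(w_2,w_1)}$. Using the alignment convention in the construction of $T(s)$, where the leftmost letter $a_1$ of $w_1$ lies above $b_{n-s+1}$, the arrangement having $a_j$ above $b_{t+j}$ for every $j$ is $T(n-t)$, and the arrangement having $a_j$ above $b_{t+j-1}$ is $T(n-t+1)$. The inequalities of the previous step show that $T(n-t)$ is a genuine tableau, since $a_j<b_{t+j-1}\le b_{t+j}$ for $j\le p$ and $a_j<b_{t+j}$ for $j\ge p+1$; on the other hand $T(n-t+1)$ violates column-strictness in the column of $b_{t+p}$ precisely because $a_{p+1}\ge b_{t+p}$. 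Hence $t_0=n-t$ and $T_{(w_2,w_1)}=T(n-t)$, whose first row $a_1\cdots a_m$ sits above $b_{t+1}\cdots$; consequently the unique upper jeu de taquin position in the first row is the cell immediately above $b_t$, just to the left of $a_1$.

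Finally I would trace $\mathrm{jdt}_e(T(n-t))$. When the hole sits above $b_{t+k}$ its right neighbour is $a_{k+1}$ and its lower neighbour is $b_{t+k}$, and the inequality $a_{k+1}<b_{t+k}$ (which is $a_j<b_{t+j-1}$ for $j=k+1\le p$) forces the horizontal step; so the hole travels rightward, shifting $a_1,\dots,a_p$ one column to the left, until it reaches the cell above $b_{t+p}$, which is exactly the displayed ``before'' picture. There $a_{p+1}\ge b_{t+p}$ blocks a further horizontal step while $a_p\le b_{t+p}\le a_{p+1}$ permits the vertical one, so $b_{t+p}$ rises and the hole drops into the lower row, reproducing the ``after'' picture. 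In a two-row shape the hole can now only slide to the right and exit, which changes neither row as a word. Reading off the rows of the resulting tableau gives first row $a_1\cdots a_p\,b_{t+p}\,a_{p+1}\cdots a_m=w_1'$ and second row $b_1\cdots b_{t+p-1}b_{t+p+1}\cdots b_n=w_2'$, whence $\mathrm{e}(w_2,w_1)=(w_2',w_1')$.

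I expect the main obstacle to be the alignment and index bookkeeping required to certify $t_0=n-t$, that is, to prove $T(n-t)$ valid and $T(n-t+1)$ invalid simultaneously, and to verify that the single slide $\mathrm{jdt}_e$ passes through exactly the displayed intermediate configuration, with its tail (the hole exiting along the bottom row) leaving both row-words untouched. All of this rests on the strictness/weakness of the three inequalities isolated at the start, so the care is in matching each slide step to the correct inequality rather than in any deep idea.
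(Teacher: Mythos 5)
Your proposal is correct and takes essentially the same route as the paper's proof: both identify $T_{(w_2,w_1)}$ as the alignment placing $a_j$ above $b_{t+j}$ (using column-strictness of the displayed configuration, $a_j<b_{t+j-1}\le b_{t+j}$ for $j\le p$, together with the vertical-move inequality $b_{t+p}\le a_{p+1}$ to rule out one more overlap), and then trace the single slide from the position above $b_t$ — horizontal steps past $a_p$, one vertical step, horizontal exit along the bottom row. Your version is just slightly more explicit about the $T(t)$-indexing (pinning down $t_0=n-t$) and about which inequality licenses each slide step.
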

\begin{proof}
First we note the following.

($\dag$) $T_{(w_2,w_1)}$ is given by the two-row tableau
$$\begin{array}{llllllllllllll}
   &   &  & a_1 & \cdots & a_{p-1} & a_p & a_{p+1} & \cdots & a_{n-t} & \cdots & a_m\\
 b_1 & \cdots & b_t & b_{t+1} & \cdots & b_{t+p-1} & b_{t+p} & b_{t+p+1} & \cdots & b_n
\end{array}.$$
In fact, since $\begin{array}{llllllllllllll}
a_1 & a_2 & \cdots & a_p\\
b_t & b_{t+1} & \cdots & b_{t+p-1}
\end{array}$ is a tableau, we have
\begin{equation}\label{con.0.2}
b_{t+k}\geq b_{t+k-1} >a_k \text{ for } 1\leq k\leq p.
\end{equation} Hence $$\begin{array}{llllllllllllll}
   &   &  & a_1 & \cdots & a_{p-1} & a_p & a_{p+1} & \cdots & a_{n-t} & \cdots & a_m\\
 b_1 & \cdots & b_t & b_{t+1} & \cdots & b_{t+p-1} & b_{t+p} & b_{t+p+1} & \cdots & b_n
\end{array}$$ is a tableau. Since the pawn slides vertically by our assumption, we have $b_{t+p}\leq a_{p+1}.$ Hence, $$\begin{array}{llllllllllllll}
   &   & a_1 & a_2 & \cdots & a_{p} & a_{p+1} & a_{p+2} & \cdots & a_{n-t+1} & \cdots & a_m\\
 b_1 & \cdots & b_t & b_{t+1} & \cdots & b_{t+p-1} & b_{t+p} & b_{t+p+1} & \cdots & b_n
\end{array}$$ is not a tableau. By the definition of $T_{(w_2,w_1)}$, ($\dag$) holds.

Consider the jeu de taquin slide of $T_{w_2,w_1}$ as follows.
$$\begin{array}{llllllllllllll}
   &   & \clubsuit & a_1 & \cdots & a_{p-1} & a_p & a_{p+1} & \cdots & a_{n-t} & \cdots & a_m\\
 b_1 & \cdots & b_t & b_{t+1} & \cdots & b_{t+p-1} & b_{t+p} & b_{t+p+1} & \cdots & b_n
\end{array}$$ By the condition (\ref{con.0.2}), the pawn slides horizontally up to the right of $a_p.$
Then by the condition $b_{t+p}\leq a_{p+1}$ as before, the pawn slides vertically, thus we obtain
$$\begin{array}{llllllllllllll}
  &   &   & a_1 & a_2 & \cdots & a_p & b_{t+p} & a_{p+1} & \cdots & a_{n-t} & \cdots & a_m\\
  & b_1 & \cdots & b_t & b_{t+1} & \cdots & b_{t+p-1} & \clubsuit & b_{t+p+1} & \cdots & b_n
\end{array}.$$ After that by the horizontal slides of the pawn, we reach the final position
$$\begin{array}{llllllllllllll}
  &   &   & a_1 & a_2 & \cdots & a_p & b_{t+p} & a_{p+1} & \cdots & a_{n-t-1} & a_{n-t} & \cdots & a_m\\
  & b_1 & \cdots & b_t & b_{t+1} & \cdots & b_{t+p-1} & b_{t+p+1} & b_{t+p+2} & \cdots & b_n & \clubsuit
\end{array}.$$
This shows that $\mathrm{e}(w_2,w_1)=(w'_2,w'_1).$ The lemma is proved.
\end{proof}
We have the following proposition.
\begin{prop}\label{cor 3.3.5}
Let $\lambda,$ $\rho$ be partitions with $l(\lambda)\leq n$ such that $\lambda\supset\rho$ and $|\lambda|-|\rho|=N.$  For $T\in \mathrm{Tab}(\lambda-\rho,\mu),$ let  $e$ (resp. $f$) be the upper (resp. lower) jeu de taquin position of the $i$-th (resp.$j$-th) row of $T$. Assume that the pawn $\clubsuit$(resp.$\spadesuit$) stops at the $j$-th ($i$-th) row after the process of jeu de taquin operation for some $j\leq i.$ Then
\begin{equation}\label{F 3.16}
b_{\mathrm{jdt}_{e}(T)}=e_{j-1}\ldots e_{i+1}e_ib_T,
\end{equation}
\begin{equation}\label{F 3.17}
b_{\mathrm{jdt}_{f}(T)}=f_{i}\ldots f_jf_{j-1}b_T.
\end{equation} In particular, $b_T$ is contained in the $U_{q}(\mathfrak{g}_0)$-connected component generated by the maximal vector $b_{\mathrm{jdt}(T)}.$
\end{prop}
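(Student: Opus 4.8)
The plan is to transport the statement to the crystal $\mathrm{W}(\mu)$ and to read off the effect of a single jeu de taquin slide as a composite of the elementary operators $\mathrm{e},\mathrm{f}$ and the associated Kashiwara operators $e_i,f_i$ on $\mathrm{W}(\mu)$. Since $\Psi$ is a crystal isomorphism (Proposition \ref{thm 3.3.3} and Theorem \ref{cor 3.3.4}), Proposition \ref{thm 3.3.3} gives $e_{j-1}\cdots e_i\,b_T=\Psi(e_{j-1}\cdots e_i\,\BBw_T)$, while $b_{\mathrm{jdt}_e(T)}=\Psi(\BBw_{\mathrm{jdt}_e(T)})$. As $\Psi$ is a bijection, (\ref{F 3.16}) is therefore equivalent to the purely combinatorial identity $e_{j-1}\cdots e_i\,\BBw_T=\BBw_{\mathrm{jdt}_e(T)}$ in $\mathrm{W}(\mu)$, and similarly (\ref{F 3.17}) reduces to $f_i\cdots f_{j-1}\,\BBw_T=\BBw_{\mathrm{jdt}_f(T)}$. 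I would prove these identities directly by tracking the pawn.

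For the upper position $e$ in the $i$-th row the pawn can only descend, so it visits the rows $i,i+1,\ldots,j$ in order and performs exactly one vertical slide between each consecutive pair $(k,k+1)$, interspersed with horizontal slides. The $k$-th vertical slide, taken with the horizontal motion surrounding it, is exactly of the form treated in Lemma \ref{lem20151125}, which identifies its effect on the two rows involved with $\mathrm{e}(w_{k+1},w_k)$, i.e. with the crystal operator $e_k$ acting on the pair $(w_{k+1},w_k)$. I would then argue by induction on $j-i$: peel off the first vertical slide as $e_i$ via Lemma \ref{lem20151125}, and apply the induction hypothesis to the residual slide, which begins with the pawn in row $i+1$ and traverses rows $i+1,\ldots,j$.

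The delicate point, which I expect to be the main obstacle, is the reconciliation between the isolated two-row operation and the genuine slide. The operation $\mathrm{e}(w_{i+1},w_i)$ is computed on the maximally overlapped tableau $T_{(w_{i+1},w_i)}$ and hence drives the hole all the way to the end of the $(i+1)$-th row, removing the bumped letter and shortening that row; in the full slide, by contrast, the pawn drops into the $(i+2)$-th row as soon as the entry below it permits, before reaching the end. I would resolve this by showing that the next operator $e_{i+1}$, acting on the shortened row $w'_{i+1}$ together with $w_{i+2}$, reinserts a letter of $w_{i+2}$ at precisely the column where the full slide makes its next vertical drop. Thus the bumping column prescribed by the maximal overlap in $T_{(w_{i+2},w'_{i+1})}$ must be shown to coincide with the drop column of the full slide, and this is exactly where the comparison rules of the slide and the defining inequalities of $T_{(\,\cdot\,,\,\cdot\,)}$ have to be matched carefully. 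Granting this, $e_{i+1}e_i$ reproduces the action of the slide on rows $i,i+1,i+2$ and the induction closes; identity (\ref{F 3.17}) follows from the mirror-image argument using the $\mathrm{f}$-analogue of Lemma \ref{lem20151125}.

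Finally, for the concluding assertion I would recall (Theorem \ref{thm 1.4.2} and the jeu de taquin theory of 1.4) that $\mathrm{jdt}(T)$ is obtained from $T$ by a sequence of upper slides rectifying the inner shape $\rho$. Applying (\ref{F 3.16}) to each slide writes $b_{\mathrm{jdt}(T)}$ as a composite of raising operators $e_k$ applied to $b_T$, so $b_T$ lies in the $U_q(\mathfrak{g}_0)$-connected component of $b_{\mathrm{jdt}(T)}$. Since $\mathrm{jdt}(T)$ has partition shape, its rows are left-justified, so the tableau $T_{(w_{k+1},w_k)}$ admits no upper jeu de taquin position and $\mathrm{e}(w_{k+1},w_k)=0$ for every $k$; hence $e_k\,b_{\mathrm{jdt}(T)}=0$ for all $k\in I_0$ and $b_{\mathrm{jdt}(T)}$ is maximal, as claimed.
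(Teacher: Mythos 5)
Your proposal is correct and follows essentially the same route as the paper: transport the statement through the crystal isomorphism $\Psi$ to $\mathrm{W}(\mu)$, observe that horizontal slides do not alter the row words, and identify each vertical slide with one Kashiwara operator via Lemma \ref{lem20151125}, composing over the rows $i,\ldots,j$. The ``delicate point'' you flag and then grant is in fact already the content of Lemma \ref{lem20151125} (its hypothesis is the configuration of the genuine slide at the moment of each vertical drop, applied to the current row words such as $w'_{i+1}$), so no further reconciliation argument is needed; your closing direct verification that $b_{\mathrm{jdt}(T)}$ is maximal is a small addition the paper leaves implicit.
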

\begin{proof}
We only prove (\ref{F 3.16}). (\ref{F 3.17}) can be proved similarly. The horizontal slide in the jeu de taquin operation does not give any change for the elements in $\mathrm{W}(\mu).$ By applying Lemma \ref{lem20151125} for each vertical slide in the jeu de taquin operation, we have
\begin{equation}\label{3.7.4}
\boldsymbol{w}_{\mathrm{jdt}_e(T)}=e_{j-1}\ldots e_i\boldsymbol{w}_T.
\end{equation} Now, by applying the crystal isomorphism $\Psi$ on the both sides of (\ref{3.7.4}) we obtain
$$b_{\mathrm{jdt}_e(T)}=\psi(\boldsymbol{w}_{\mathrm{jdt}_e(T)})=\psi(e_{j-1}\ldots e_i\boldsymbol{w}_T)=e_{j-1}\ldots e_i\psi(\boldsymbol{w}_T)=e_{j-1}\ldots e_ib_T.$$ Thus (\ref{F 3.16}) is proved.
\end{proof}

\subsection{} Let $B$ be a $U_{q}(\mathfrak{g}_0)$-crystal. For $\lambda\in \mathscr{P}_N$ such that $l(\lambda)\leq n,$ we define a subset $P(B,\lambda)$ of $B$ by
\begin{equation}
P(B,\lambda)=\{b\in B\mid\mathrm{wt}(b)=\lambda, e_ib=0 \text{ for any } i\in I_0 \}.
\end{equation}
 It is well-known (see, e.g., [NY]) that the map
\begin{eqnarray}\label{3.6.2}
 \Psi_\lambda:\mathrm{Tab}(\lambda,\mu) &\isom &P(B(\mu),\lambda)\\
 T &\mapsto &b_T\nonumber
\end{eqnarray} gives a bijection.
Generalizing the definition of $P(B(\mu),\lambda),$ we shall define a subset $P(B(\mu),\Bla)$ of $B(\mu)$ associated to a double partition $\Bla$ as follows; let $\Bla=(\lambda',\lambda'')\in\mathscr{P}_{N,2}$ with $\lambda'=(\lambda'_1,\ldots,\lambda'_s),$ $\lambda''=(\lambda''_1,\ldots,\lambda''_t),$ such that $s+t\leq n.$ We regard $\Bla$ as an element in $\overline{P}$ as in 3.2. Set
\begin{equation}
P(B,\Bla)=\{b\in B\mid\mathrm{wt}(b)=\Bla, e_ib=0 \text{ for any } i\in I_0-\{ s\} \}.
\end{equation}
Under the identification $\mathrm{Tab}(\Bla,\mu)$ with $\mathrm{Tab}(\xi_{\Bla,a},\mu)$ with $a=\lambda_s,$ we define a map
\begin{eqnarray}
\Psi_{\Bla}:\mathrm{Tab}(\Bla,\mu) &\rightarrow &B(\mu)\\
T &\mapsto &b_T.\nonumber
\end{eqnarray}

\begin{prop}\label{cor 3.4.2}
Under the notation above, the map $\Psi_{\Bla}$ gives a bijection.
\end{prop}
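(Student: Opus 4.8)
The plan is to realise $\Psi_{\Bla}$ as the restriction to $\mathrm{Tab}(\Bla,\mu)$ of the crystal isomorphism $\Psi$ precomposed with the embedding $T\mapsto\BBw_T$ of 3.7 applied to the skew shape $\xi_{\Bla,a}$, whose two connected components are the partition shapes $\lambda'$ and $\lambda''$ (choosing $a$ large enough that $\xi_{\Bla,a}$ really splits). Since $\Psi$ is a bijection by Theorem \ref{cor 3.3.4} and $T\mapsto\BBw_T$ is visibly injective, $\Psi_{\Bla}$ is injective, so the whole content is to show that its image is exactly $P(B(\mu),\Bla)$. Because $\Psi$ is a crystal isomorphism one has $e_ib_T=\Psi(e_i\BBw_T)$, so I may transport everything to $\mathrm{W}(\mu)$ and argue with the tuples of rows, using that $e_i\BBw=0$ is equivalent to $\mathrm{e}(w_{i+1},w_i)=0$, i.e. to the absence of an upper jeu de taquin position in the first row of $T_{(w_{i+1},w_i)}$.

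To see that the image lies in $P(B(\mu),\Bla)$, I would first compute $\mathrm{wt}(\BBw_T)=\sum_{i=1}^s\lambda'_i\epsilon_i+\sum_{j=1}^t\lambda''_j\epsilon_{s+j}=\Bla$ directly from the row lengths and the identification of $\Bla$ with an element of $\overline P$ in 3.4. Next, for $i\in I_0\setminus\{s\}$ the two consecutive rows $w_i,w_{i+1}$ of $\BBw_T$ belong to a single partition-shaped component of $T$ (to $T_+$ when $i<s$, to $T_-$ when $s<i<s+t$, and one of them is empty when $i\ge s+t$); as that component is a genuine tableau, $|w_i|\ge|w_{i+1}|$ and the two rows already left-justify into a column-strict two-row tableau. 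Hence $T_{(w_{i+1},w_i)}$ is this left-justified tableau, it has no upper jeu de taquin position, so $\mathrm{e}(w_{i+1},w_i)=0$ and therefore $e_ib_T=0$. Thus $b_T\in P(B(\mu),\Bla)$, the node $s$ being precisely the one where no vanishing is imposed.

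For surjectivity I would run this backwards. Let $b\in P(B(\mu),\Bla)$ and put $\BBw=\Psi^{-1}(b)=(w_n,\dots,w_1)$, so $\mathrm{wt}(\BBw)=\Bla$ and $e_i\BBw=0$ for all $i\ne s$. Since both $(|w_1|,\dots,|w_n|)$ and $(\lambda'_1,\dots,\lambda'_s,\lambda''_1,\dots,\lambda''_t,0,\dots,0)$ are nonnegative tuples of total size $N$, the congruence $\mathrm{wt}(\BBw)=\Bla$ in $\overline P=\mathbb Z^n/\mathbb Z(1,\dots,1)$ forces them to be equal; hence $|w_i|=\lambda'_i$ for $i\le s$, $|w_{s+j}|=\lambda''_j$ for $j\le t$, and $|w_i|=0$ otherwise. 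For each $i<s$ and each $s<i<s+t$ one has $|w_i|\ge|w_{i+1}|$, and $e_i\BBw=0$ says $T_{(w_{i+1},w_i)}$ has no upper jeu de taquin position; by the combinatorial equivalence isolated below this means $w_i,w_{i+1}$ left-justify into a column-strict two-row tableau. Running over all such $i$ shows that $w_1,\dots,w_s$ assemble into a tableau $T_+$ of shape $\lambda'$ and $w_{s+1},\dots,w_{s+t}$ into a tableau $T_-$ of shape $\lambda''$, with no constraint across the node $s$; so $T=(T_+,T_-)\in\mathrm{Tab}(\Bla,\mu)$ and $\BBw=\BBw_T$, whence $b=b_T$.

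The main obstacle, and the one lemma the argument rests on, is this equivalence used in both directions: writing $w_i=x_p\cdots x_1$ and $w_{i+1}=y_q\cdots y_1$ with $p\ge q$, the maximal-overlap configuration $T_{(w_{i+1},w_i)}=T(t_0)$ has no upper jeu de taquin position in its first row if and only if $w_{i+1}$ sits under the leftmost entries of $w_i$ as a column-strict two-row tableau. I would prove it by analysing $t_0$ from the definition of $T_{(w_2,w_1)}$: an upper jeu de taquin position exists exactly when the lower row overhangs to the left, i.e. $t_0<q$, while $t_0=q$ holds precisely when the left-justified alignment $x_{p-c+1}<y_{q-c+1}$ is valid for every column $c$. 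Everything else is bookkeeping, namely the weight computation and the transport of $e_i$, $\mathrm{wt}$ along the isomorphism $\Psi$ of Theorem \ref{cor 3.3.4} (one may alternatively phrase the row manipulations through the jeu de taquin slides of Proposition \ref{cor 3.3.5}).
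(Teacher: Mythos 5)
Your proposal is correct and takes essentially the same route as the paper's proof: transport $P(B(\mu),\Bla)$ into $\mathrm{W}(\mu)$ via the crystal isomorphism $\Psi$, characterize $e_i\BBw=0$ as the condition that the rows $w_{i+1},w_i$ left-justify into a column-strict two-row tableau (compatibility of $w_{i+1}*w_i$ with the partition $(\ell_i,\ell_{i+1})$), identify the weight condition with the prescribed row lengths, and conclude that these conditions hold for all $i\in I_0\setminus\{s\}$ exactly when the rows assemble into a pair of tableaux of shapes $\lambda'$ and $\lambda''$. The only cosmetic difference is that you prove the key equivalence directly from the definition of $T_{(w_{i+1},w_i)}$ and $t_0$, whereas the paper cites the argument in the proof of its Proposition 3.7.2.
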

\begin{proof}
By Theorem \ref{cor 3.3.4}, one can write $P(B(\mu),\Bla)$ as $$P(B(\mu),\Bla)=\{b_{\BBw}\mid \BBw\in B(\mu), \mathrm{wt}(\BBw)=\Bla, \text{ and } e_i\BBw=0 \text{ for } i\in I_0-\{ s\}\}.$$
Let $\BBw=(w_n,\ldots,w_1)\in \mathrm{W}(\mu)$ with $w_k=a_{\ell_k,k}a_{\ell_k-1,k}\cdots a_{1,k},$ $1\leq k\leq n.$ As in the proof of Proposition \ref{cor 3.3.5} , for $i\in I_0,$ the condition $e_i(\BBw)=0$ is equivalent to the condition that $\ell_i\geq \ell_{i+1}$ and the word $w_{i+1}*w_i$ is compatible with the partition $(\ell_i,\ell_{i+1}).$
On the other hand $\mathrm{wt}(\BBw)=\Bla$ if and only if
\begin{equation*}
\ell_k = \begin{cases}
                \lambda'_k   &\quad\text{ if $1\leq k\leq s$}, \\
                \lambda''_{k-s}       &\quad\text{ if $s+1\leq k\leq n.$ }
        \end{cases}
\end{equation*}
Hence, $\mathrm{wt}(\BBw)=\Bla,$ $e_i\BBw=0$ for $i\in I_0-\{s\}$ if and only if $w_s*\cdots *w_1$ is compatible with $\lambda'$ and $w_{t+s}*\cdots* w_{s+1}$ is compatible with $\lambda''.$ The proposition follows from this.
\end{proof}

\subsection{} It is known that there exists the energy function $\mathrm{E}:B(\mu)\rightarrow \mathbb{Z}_{\geq 0}$ (for details, see, e.g., [NY]). The following result was proved by  Nakayashiki-Yamada [NY].
\begin{thm}[{[NY]}]\label{thm 3.5.1}
 Let $\lambda,\mu\in\mathscr{P}_N$ such that $l(\lambda)\leq n.$
\begin{enumerate}
\item[(1)]
For $T\in \mathrm{Tab}(\lambda,\mu),$ we have
\begin{equation}\label{F 3.20}
\mathrm{E}(b_T)=\mathrm{c}(T).
\end{equation}

\item[(2)]
 The energy function $\mathrm{E}:B(\mu)\rightarrow \mathbb{Z}_{\geq 0}$ is constant on each $U_{q}(\mathfrak{g}_0)$-connected component of $B(\mu)$.
\end{enumerate}
\end{thm}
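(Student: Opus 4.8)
The plan is to dispose of (2) quickly from the way $\mathrm{E}$ is assembled, and to regard (1) as the substantial statement, to be proved by induction on the number of tensor factors.

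\emph{Proof of (2).} Recall that $\mathrm{E}$ is built from the local energy functions $H\colon B^{1,m}\otimes B^{1,m'}\to\mathbb{Z}_{\geq 0}$ and the combinatorial $R$-matrices $R\colon B^{1,m}\otimes B^{1,m'}\to B^{1,m'}\otimes B^{1,m}$: the value $\mathrm{E}(b)$ is a sum of local energies evaluated after the relevant tensor factors have been brought adjacent by $R$-moves. Now each $R$ is an isomorphism of $U_q(\mathfrak{g}_0)$-crystals, hence commutes with $e_i,f_i$ for $i\in I_0$; and each $H$ is, by its normalization, constant on the $U_q(\mathfrak{g}_0)$-components of $B^{1,m}\otimes B^{1,m'}$, being pinned down up to an additive constant by the requirement $H\circ e_i=H$ for $i\in I_0$ together with its prescribed jump under $e_0$. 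Consequently $\mathrm{E}$ is unchanged along every $e_i,f_i$ with $i\in I_0$, which is (2).

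\emph{Proof of (1).} Here $b_T=\Psi_\lambda(T)$ ranges over the classically highest weight vectors in $P(B(\mu),\lambda)$, so only the values of $\mathrm{E}$ on such vectors are needed. I would induct on $r=l(\mu)$, writing $B(\mu)=B^{1,\mu_r}\otimes B(\mu')$ with $\mu'=(\mu_1,\dots,\mu_{r-1})$. For $T\in\mathrm{Tab}(\lambda,\mu)$ the leftmost factor $b_r\in B^{1,\mu_r}$ records the rows of $T$ containing the largest letter $r$; deleting those cells produces a semistandard tableau $T'$ of partition shape and weight $\mu'$, and by the construction of $\Psi$ in 3.4 the tail of $b_T$ is exactly $b_{T'}$, which is therefore again highest weight. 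The base case $r=1$ is immediate, both sides being $0$. The defining recursion for the energy gives
\begin{equation*}
\mathrm{E}(b_T)=\mathrm{E}(b_{T'})+\sum_{j=1}^{r-1}H\bigl(b_r^{(j)}\otimes b_j\bigr),
\end{equation*}
where $b_r^{(j)}$ denotes $b_r$ transported past the intervening factors by $R$-matrices, and the inductive hypothesis supplies $\mathrm{E}(b_{T'})=\mathrm{c}(T')$. On the charge side the Lascoux--Sch\"{u}tzenberger definition yields a companion decomposition $\mathrm{c}(T)=\mathrm{c}(T')+\iota(T)$, where $\iota(T)$ is the contribution to the index coming from the occurrences of the letter $r$.

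The crux, and the main obstacle, is the identification $\sum_{j<r}H(b_r^{(j)}\otimes b_j)=\iota(T)$. Concretely, on a highest weight vector the $R$-matrices and the local energy for $B^{1,m}\otimes B^{1,m'}$ in type $A$ are governed by Schensted insertion of the two rows: $H(x\otimes y)$ is an explicit function of the shape of the insertion tableau of $x$ and $y$ (essentially the length of its second row). One must sum this quantity along the chain of insertions and compare it, box by box, with the charge index of the largest letter; this is the genuinely delicate bookkeeping. The combinatorial results already established streamline it: Corollary \ref{cor 1.2.2} and Proposition \ref{prop 1.2.4} describe precisely the row and double-row structure produced by the successive bumpings, hence the shapes that enter each local energy $H$. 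Once $\sum_{j<r}H(b_r^{(j)}\otimes b_j)=\iota(T)$ is verified, the two recursions coincide and the induction closes, giving $\mathrm{E}(b_T)=\mathrm{c}(T)$; together with (2) this also computes $\mathrm{E}$ on every element of $B(\mu)$.
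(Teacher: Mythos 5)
The paper itself offers no proof of Theorem \ref{thm 3.5.1}: it is quoted directly from Nakayashiki--Yamada [NY], and the text supplies only the statement. So your attempt cannot be checked against an internal argument of the paper; it has to stand on its own as a proof of the [NY] theorem, and at present it does not.

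Your part (2) is fine in outline: invariance of the local energies $H$ under the classical Kashiwara operators, together with the fact that the combinatorial $R$-matrices are $U_{q}(\mathfrak{g}_0)$-crystal isomorphisms, gives constancy of $\mathrm{E}$ on each classical component; that is the standard argument. The genuine gap is in part (1). The inductive framework is reasonable: the recursion $\mathrm{E}(b_T)=\mathrm{E}(b_{T'})+\sum_{j<r}H(b_r^{(j)}\otimes b_j)$ is correct for the NY energy, and the charge recursion $\mathrm{c}(T)=\mathrm{c}(T')+\iota(T)$ can be justified (deleting all occurrences of the largest letter $r$ affects neither the extraction of the standard subwords of $w(T)$ for the letters $1,\dots,r-1$ nor their indices --- though this, too, you assert rather than prove). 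But the step you yourself call the crux, the identity $\sum_{j<r}H(b_r^{(j)}\otimes b_j)=\iota(T)$, is precisely the content of the theorem, and you leave it unverified. Appealing to Corollary \ref{cor 1.2.2} and Proposition \ref{prop 1.2.4} to ``streamline'' it does not close it: those results describe the row and double-row structure of recording positions under column insertion (in this paper they serve only to prove Lemma \ref{lem 1.6.1}, the compatibility of $\sigma(T)$ with the skew shape); they compute neither the local energies $H(b_r^{(j)}\otimes b_j)$ --- which require explicit control of the combinatorial $R$-matrix, i.e.\ of how $b_r$ is transformed as it is transported past each intervening factor --- nor the charge indices making up $\iota(T)$. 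Matching these two quantities is the hard core of [NY]'s proof and is where essentially all of their work lies; without it your induction does not close. As written, the proposal is a proof plan with the central step missing, not a proof.
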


By summing up the previous results, we obtain the following theorem, which is a generalization of Theorem \ref{thm 3.5.1} (1).

\begin{thm}\label{Thm 3.5.2}
Let $\lambda,$ $\rho$ be partitions such that $l(\lambda)\leq n,$ and $\lambda\supset\rho$ with $|\lambda|-|\rho|=N.$ Let $\mu\in\mathscr{P}_N.$ Then for $T\in \mathrm{Tab}(\lambda-\rho,\mu),$ we have
\begin{equation}\label{3.22}
\mathrm{E}(b_T)=\mathrm{c}(T).
\end{equation}   In particular, let $\Bla=(\lambda',\lambda'')\in\mathscr{P}_{N,2}$ with $\lambda'=(\lambda'_1,\ldots,\lambda'_s),$ $\lambda''=(\lambda''_1,\ldots,\lambda''_t)$ such that $s+t\leq n.$ Then for $T\in\mathrm{Tab}(\Bla,\mu),$ we have
\begin{equation}\label{3.23}
\mathrm{E}(b_T)=\mathrm{c}(T).
\end{equation}
\end{thm}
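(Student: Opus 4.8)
The plan is to reduce the skew-shape statement (\ref{3.22}) to the straight-shape case already established by Nakayashiki--Yamada (Theorem \ref{thm 3.5.1}), using jeu de taquin rectification as the bridge. Two invariance principles make this work: the charge is unchanged under jeu de taquin (Corollary \ref{cor 1.4.4}), and the energy is constant on each $U_q(\mathfrak{g}_0)$-connected component of $B(\mu)$ (Theorem \ref{thm 3.5.1}(2)). The whole argument is then an assembly of earlier results rather than a fresh computation.

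First I would rectify $T$: let $\nu$ be the partition shape of $\mathrm{jdt}(T)$, so that $\mathrm{jdt}(T)\in\mathrm{Tab}(\nu,\mu)$, since the slides preserve the weight and the result has partition shape. Here one must check $l(\nu)\leq n$ so that Theorem \ref{thm 3.5.1} applies to $\mathrm{jdt}(T)$; this holds because jeu de taquin does not increase the number of occupied rows, whence $l(\nu)\leq l(\lambda)\leq n$. Next, Proposition \ref{cor 3.3.5} places $b_T$ in the $U_q(\mathfrak{g}_0)$-connected component generated by the maximal vector $b_{\mathrm{jdt}(T)}$, so by Theorem \ref{thm 3.5.1}(2) the energy agrees on the two elements, i.e. $\mathrm{E}(b_T)=\mathrm{E}(b_{\mathrm{jdt}(T)})$.

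Assembling the chain then gives
\[
\mathrm{E}(b_T)=\mathrm{E}(b_{\mathrm{jdt}(T)})=\mathrm{c}(\mathrm{jdt}(T))=\mathrm{c}(T),
\]
where the middle equality is Theorem \ref{thm 3.5.1}(1) for the straight-shape tableau $\mathrm{jdt}(T)$ and the last is Corollary \ref{cor 1.4.4}. This proves (\ref{3.22}).

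For the assertion (\ref{3.23}) concerning double partitions, I would specialize (\ref{3.22}) to the skew shape $\lambda-\rho=\xi_{\Bla,a}$ of 2.2 (for any admissible $a\geq\lambda''_1$), invoking the identification of $\mathrm{Tab}(\Bla,\mu)$ with $\mathrm{Tab}(\xi_{\Bla,a},\mu)$; since $w(T)=w(\widetilde{T}_a)$, both $\mathrm{c}(T)$ and the element $b_T$ are by definition unchanged under this identification, and the length hypothesis $l(\widetilde{\xi}_{\Bla,a})=s+t\leq n$ is exactly the standing assumption. I expect the only point genuinely requiring care to be the second paragraph: confirming that Proposition \ref{cor 3.3.5} really does join $b_T$ to the maximal vector $b_{\mathrm{jdt}(T)}$ within a single connected component, and that the hypotheses of Theorem \ref{thm 3.5.1} (notably $l(\nu)\leq n$) are met. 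Once the connectivity is granted, the energy invariance and the charge invariance finish the argument with no further computation.
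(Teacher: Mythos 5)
Your proposal is correct and follows essentially the same route as the paper's own proof: both use Proposition \ref{cor 3.3.5} to place $b_T$ in the connected component of the maximal vector $b_{\mathrm{jdt}(T)}$, then combine Theorem \ref{thm 3.5.1}(2) (energy constant on components), Theorem \ref{thm 3.5.1}(1) (the straight-shape case), and Corollary \ref{cor 1.4.4} (charge invariance under jeu de taquin), with (\ref{3.23}) obtained as the special case $\lambda-\rho=\xi_{\Bla,a}$. Your additional check that $l(\nu)\leq n$ (since slides do not increase the number of rows) is a detail the paper leaves implicit, and is a welcome clarification rather than a divergence.
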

\begin{proof}
Let $T\in \mathrm{Tab}(\lambda-\rho,\mu).$ By Proposition \ref{cor 3.3.5}, $b_T$ is contained in the $U_{q}(\mathfrak{g}_0)$-connected component of $B(\mu)$ generated by the maximal vector $b_{\mathrm{jdt}(T)}.$ Since $\mathrm{E}$ is constant on each $U_{q}(\mathfrak{g}_0)$-connected component, we have $\mathrm{E}(b_T)=\mathrm{E}(b_{\mathrm{jdt}(T)}).$ Since $\mathrm{jdt}(T)\in \mathrm{Tab}(\nu,\mu)$ for some $\nu\in\mathscr{P}_{N},$ $\mathrm{E}(b_T)=\mathrm{E}(b_{\mathrm{jdt}(T)})$ by Theorem \ref{thm 3.5.1} (1). Moreover, we have $\mathrm{c}(T)=\mathrm{c}(\mathrm{jdt}(T))$ by Corollary 1.4.5. Thus (\ref{3.22}) holds. (\ref{3.23}) is a special case of (\ref{3.22}). The theorem is proved.
\end{proof}
For $\BBw=(w_n,\ldots,w_1)\in \mathrm{W}(\mu),$ we define the charge of $\BBw$ as
$$\mathrm{c}(\BBw)=\mathrm{c}(w_n*\cdots*w_1).$$
\begin{cor}
For $\BBw\in\mathrm{W}(\mu),$ $\mathrm{c}(\BBw)=\mathrm{E}(b_{\BBw}).$
\end{cor}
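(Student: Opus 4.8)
The plan is to deduce this from Theorem \ref{Thm 3.5.2} by exhibiting every $\BBw\in\mathrm{W}(\mu)$ as $\BBw_T$ for a suitably chosen skew tableau $T$, so that the skew energy--charge identity applies directly. Write $\BBw=(w_n,\ldots,w_1)$ and let $m_i$ be the length of the row $w_i$; since $\mathrm{wt}(w_n*\cdots*w_1)=\mu$ we have $\sum_i m_i=N$.

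First I would build an explicit skew shape realizing $\BBw$. Define $\lambda=(\lambda_1,\ldots,\lambda_n)$ and $\rho=(\rho_1,\ldots,\rho_n)$ by $\lambda_i=m_i+m_{i+1}+\cdots+m_n$ and $\rho_i=m_{i+1}+\cdots+m_n$ (so $\rho_n=0$). Both sequences are weakly decreasing, $\lambda\supset\rho$, $\lambda_i-\rho_i=m_i$, $|\lambda|-|\rho|=N$, and $l(\lambda)\le n$. The decisive feature is that row $i$ of $\lambda-\rho$ occupies the columns $[\rho_i+1,\lambda_i]$ while row $i+1$ ends at column $\lambda_{i+1}=\rho_i$; hence distinct rows occupy pairwise disjoint sets of columns. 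Consequently the filling $T$ whose $i$-th row is $w_i$ is column-strict vacuously, so $T\in\mathrm{Tab}(\lambda-\rho,\mu)$ with $\BBw_T=\BBw$, and therefore $b_T=\Psi(\BBw_T)=b_{\BBw}$.

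Next I would match the two statistics. Reading $T$ row by row from right to left, and using that the rows sit in mutually disjoint columns so that no interleaving occurs, the word of $T$ is exactly $w(T)=w_n*w_{n-1}*\cdots*w_1$; hence $\mathrm{c}(T)=\mathrm{c}(w(T))=\mathrm{c}(\BBw)$ by the definition of the charge on $\mathrm{W}(\mu)$. Applying Theorem \ref{Thm 3.5.2} to $T\in\mathrm{Tab}(\lambda-\rho,\mu)$ gives $\mathrm{E}(b_T)=\mathrm{c}(T)$, and combining the three equalities yields $\mathrm{E}(b_{\BBw})=\mathrm{E}(b_T)=\mathrm{c}(T)=\mathrm{c}(\BBw)$.

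The only genuine point to verify is the realizability of an arbitrary $\BBw$ as a skew tableau, and I expect this staircase construction to be the crux: one checks that the ``anti-diagonal'' placement forces disjoint columns, so that no column-strictness constraint is imposed on the filling, and that the reading word of the resulting skew tableau is precisely the concatenation $w_n*\cdots*w_1$ defining $\mathrm{c}(\BBw)$. Once these are in place, Theorem \ref{Thm 3.5.2} supplies the rest; note that $T$ is genuinely of skew (not straight) shape here, which is exactly why the skew version of the energy--charge identity is needed and Theorem \ref{thm 3.5.1}(1) alone would not suffice.
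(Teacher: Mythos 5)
Your proof is correct and takes essentially the same route as the paper: the paper's proof simply says ``one can choose some tableau $T$ whose $i$-th row is $w_i$,'' applies Theorem \ref{Thm 3.5.2} to get $\mathrm{c}(T)=\mathrm{E}(b_T)$, and concludes via $w(T)=w_n*\cdots*w_1$ and $b_T=b_{\BBw}$. Your explicit staircase shape $\lambda_i=m_i+\cdots+m_n$, $\rho_i=m_{i+1}+\cdots+m_n$ is exactly the construction the paper leaves implicit, and your verification that the disjoint-column placement makes column-strictness vacuous supplies the detail the paper omits.
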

\begin{proof}
One can choose some tableau $T$ whose $i$-th row is $w_i$ for $1\leq i\leq n.$ Then $w(T)=w_n*\cdots*w_1,$ thus $\mathrm{c}(T)=\mathrm{c}(\BBw).$ By Theorem \ref{3.23}, $\mathrm{c}(T)=\mathrm{E}(b_T).$ We obtain $\mathrm{c}(\BBw)=\mathrm{E}(b_{\BBw})$ by noticing that $b_T=b_{\BBw}.$ The corollary is proved.
\end{proof}

We now obtain a $1D$ sum type expression of double Kostka polynomials in terms of crystals and the energy function.
\begin{thm}\label{thm 3.9.3}
Let $\Bla=(\lambda',\lambda''),\Bmu\in\mathscr{P}_{N,2 },$ with $\lambda'=(\lambda'_1,\ldots,\lambda'_s),$ $\lambda''=(\lambda''_1,\ldots,\lambda''_t)$ such that $s+t\leq n.$ Assume that $\Bmu=(-,\mu'').$  Then
\[
K_{\Bla,\Bmu}(t)=t^{|\lambda'|}\sum_{b\in P(B(\mu''),\Bla)}t^{2\mathrm{E}(b)}.
\]
\end{thm}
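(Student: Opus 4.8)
The plan is to obtain the $1D$ sum expression by a direct substitution, combining three results already established in the paper: the Lascoux--Sch\"utzenberger type formula for $K_{\Bla,\Bmu}(t)$ from Theorem \ref{nthm 2.3.2}, the bijection $\Psi_{\Bla}$ of Proposition \ref{cor 3.4.2}, and the charge--energy identity of Theorem \ref{Thm 3.5.2}. The governing idea is that the combinatorial (charge) description of the double Kostka polynomial can be transported, via $\Psi_{\Bla}$, onto the crystal side $P(B(\mu''),\Bla)$, where the charge statistic is converted into the energy function.

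First I would specialize the earlier results to the weight $\mu=\mu''$. Since $\Bmu=(-,\mu'')$, Theorem \ref{nthm 2.3.2} gives
\[
K_{\Bla,\Bmu}(t)=t^{|\lambda'|}\sum_{T\in\mathrm{Tab}(\Bla,\mu'')}t^{2\mathrm{c}(T)}.
\]
Next, because $s+t\leq n$, Proposition \ref{cor 3.4.2} applies with $\mu=\mu''$ and yields a bijection $\Psi_{\Bla}\colon\mathrm{Tab}(\Bla,\mu'')\isom P(B(\mu''),\Bla)$, $T\mapsto b_T$, which I would use to reindex the sum by $b=b_T$. Finally, by the special case (\ref{3.23}) of Theorem \ref{Thm 3.5.2} (again valid under $s+t\leq n$), every $T\in\mathrm{Tab}(\Bla,\mu'')$ satisfies $\mathrm{c}(T)=\mathrm{E}(b_T)$, so the exponent $2\mathrm{c}(T)$ becomes $2\mathrm{E}(b)$. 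Substituting produces
\[
K_{\Bla,\Bmu}(t)=t^{|\lambda'|}\sum_{b\in P(B(\mu''),\Bla)}t^{2\mathrm{E}(b)},
\]
which is the claimed formula.

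I expect no genuine obstacle at this last step: all the real difficulty has already been absorbed into the three ingredients above (in particular into the proof of Proposition \ref{thm 3.3.3} and the constancy of $\mathrm{E}$ on connected components behind Theorem \ref{Thm 3.5.2}). The only points demanding care are bookkeeping: verifying that the hypothesis $s+t\leq n$ is precisely what is required for both Proposition \ref{cor 3.4.2} and Theorem \ref{Thm 3.5.2} to apply to the weight $\mu''$, and confirming that the charge $\mathrm{c}(T)$ in Theorem \ref{nthm 2.3.2} is the same statistic $\mathrm{c}(T)=\mathrm{c}(w(T))$ used in the charge--energy identity. This last identification is exactly the reason the charge on $\mathrm{Tab}(\Bla,\mu)$ was modified as in Result I, so that the two statistics coincide; once this is checked, the theorem follows by the one-line substitution above.
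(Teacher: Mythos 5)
Your proof is correct and is exactly the paper's argument: the paper also deduces the theorem as an immediate consequence of Theorem \ref{nthm 2.3.2}, Proposition \ref{cor 3.4.2}, and Theorem \ref{Thm 3.5.2}, reindexing the charge-generating sum over $\mathrm{Tab}(\Bla,\mu'')$ via $\Psi_{\Bla}$ and replacing $\mathrm{c}(T)$ by $\mathrm{E}(b_T)$. Your additional bookkeeping remarks (the role of $s+t\leq n$ and the identification of the two charge statistics) only make explicit what the paper leaves implicit.
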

\begin{proof}
 The theorem is an immediate consequence of  Theorem \ref{nthm 2.3.2}, Proposition \ref{cor 3.4.2} and Theorem \ref{Thm 3.5.2}.
\end{proof}

\section{Rigged configurations}
\subsection{}Set $\mathscr{H}=I_0\times\mathbb{Z}_{>0}.$ For $B=B^{r_k,m_k}\otimes B^{r_{k-1},m_{k-1}}\otimes \cdots \otimes B^{r_1,m_1},$ $(r_i,m_i)\in \mathscr{H},$ the multiplicity array $L(B)$ is defined as $L(B)=(L^{(a)}_i)_{(a,i)\in \mathscr{H}},$ where $L^{(a)}_i$ denotes the number of $B^{a,i} $ within $B.$ In the case $B=B(\mu),$ let $L(\mu)=L(B).$
The rigged configurations are combinatorial objects depending on $L(B).$
Let $\lambda=(\lambda_1,\ldots,\lambda_k)$ be a partition. A rigged partition is a labeling $(\lambda_i,x_i)$ for each part $\lambda_i$ of $\lambda,$ where two labelings are identified if one is obtained from the other by permutations of $\{(x_i)\mid\lambda_i=j\}$ for fixed $j.$ We denote the rigged partition by $(\lambda,J),$ where $J=(x_i)_{1\leq i\leq k}$ is called the rigging of $\lambda.$ The pairs $(\lambda_i,x_i)$ are referred as strings; $\lambda_i$ is referred as the length of the string, and $x_i$ as the label of the string. The sequence of partitions $\nu=\{\nu^{(a)}\mid a\in I_0\}$ is called a configuration. A rigged configuration is a pair $(\nu,J)$ with configuration $\nu$ and a sequence $J=\{J^{(a)}\mid a\in I_0\},$ where $J^{(a)}$ is a rigging of the partition $\nu^{(a)}.$

\subsection{}In order to define a weight $\mathrm{wt}(\nu,J)$ and the vacancy numbers $p_i^{(a)}$ for a rigged configuration $(\nu,J),$ we need to give a multiplicity array $L=(L^{(a)}_i)_{(a,i)\in \mathscr{H}}$ satisfying the condition that $L^{(a)}_i=0$ for almost $(a,i)\in \mathscr{H}.$  A rigged configuration $(\nu,J)$ with the additional data $L$ is called a $L$-rigged configuration. The corresponding configuration $\nu$ is called a $L$-configuration.
Let $\lambda=(\lambda_1,\ldots,\lambda_k)$ be a partition. For an integer $i\geq0,$ set $m_i(\lambda)=\sharp\{j\mid\lambda_j=i\}$ and $Q_i(\lambda)=\sum_{j=1}^k\min(i,\lambda_j).$
Let $(\nu,J)$ be a $L$- configuration.  We write $m^{(a)}_i=m_i(\nu^{(a)})$ and $Q^{(a)}_i=Q_i(\nu^{(a)})$ for $I_0\cup\{0\}$.
 Define the weight $\mathrm{wt}(\nu,J)$ as
\begin{equation}\label{F 4.1.1}
\mathrm{wt}(\nu,J)=\sum_{(a,i)\in\mathscr{H}}iL_i^{(a)}\overline{\Lambda}_a-\sum_{(a,i)\in\mathscr{H}}im_i^{(a)}\alpha_a.
\end{equation}
We define the weight of the corresponding $L$-configuration $\nu$ as the weight of $(\nu,J).$ For $(a,i)\in\mathscr{H},$ we define the vacancy number $p_{i}^{(a)}$ as
\begin{equation}\label{F 4.1.2}
p_{i}^{(a)}=\sum_{j\geq 1}\min(i,j)L^{(a)}_j-\sum_{(b,j)\in\mathscr{H}}(\alpha_a,\alpha_b)\min(i,j)m_j^{(b)},
\end{equation} where $(\cdot,\cdot)$ is the normalized invariant form on the weight lattice $\overline{P}.$  For the convenience sake, we set $p^{(a)}_0=0.$ This convention fits to the formula in (\ref{F 4.1.2}).
A string $(i,x)\in(\nu^{(a)}, J^{(a)})$ is said to be singular if $x=p^{(a)}_{i}$.

 The following lemma is easily checked by using (\ref{F 4.1.2}).
\begin{lem}[{[Sc1, (3.4)]}]\label{Lem 4.1.1}
Let $(\nu,J)$ be a $L$-rigged configuration. Then
\begin{equation}\label{4.2.3}
2p_{i}^{(a)}-p_{i-1}^{(a)}-p_{i+1}^{(a)}\geq m_{i}^{(a-1)}-2m_{i}^{(a)}+m_{i}^{(a+1)}
\end{equation} for $i>0.$
In particular, the convexity condition
\begin{equation}\label{4.2.4}
p_{i}^{(a)}\geq \dfrac{1}{2}(p_{i-1}^{(a)}+p_{i+1}^{(a)})   \text{    if  }  m_{i}^{(a)}=0
\end{equation} holds.
\end{lem}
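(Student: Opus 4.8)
The plan is to obtain both statements from a direct computation of the discrete second difference of the vacancy number using its explicit expression (\ref{F 4.1.2}). The whole argument rests on the elementary observation that, for a fixed $j\geq 1$ and for $i>0$, the function $i\mapsto\min(i,j)$ has second difference equal to a Kronecker delta:
\begin{equation*}
2\min(i,j)-\min(i-1,j)-\min(i+1,j)=
\begin{cases}
1 & \text{if } i=j,\\
0 & \text{otherwise,}
\end{cases}
\end{equation*}
which is checked case by case (the three ranges $i<j$, $i=j$, $i>j$ are immediate, and the endpoint $i=1$ is covered since $\min(0,j)=0$, consistent with the convention $p_0^{(a)}=0$).

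First I would apply this identity to each of the two sums in (\ref{F 4.1.2}). As the coefficients $L^{(a)}_j$ and $(\alpha_a,\alpha_b)m^{(b)}_j$ are independent of $i$, the second difference commutes with the sums over $j$ and collapses every $\min$-sum to its diagonal term, yielding
\begin{equation*}
2p_{i}^{(a)}-p_{i-1}^{(a)}-p_{i+1}^{(a)}=L_{i}^{(a)}-\sum_{b\in I_0}(\alpha_a,\alpha_b)m_i^{(b)}.
\end{equation*}
Next I would insert the entries of the normalized invariant form in type $A_{n-1}$, namely $(\alpha_a,\alpha_a)=2$, $(\alpha_a,\alpha_b)=-1$ for $|a-b|=1$, and $(\alpha_a,\alpha_b)=0$ otherwise, together with the boundary convention $m_i^{(0)}=m_i^{(n)}=0$ (which handles $a=1$ and $a=n-1$ uniformly). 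This rewrites the sum as $2m_i^{(a)}-m_i^{(a-1)}-m_i^{(a+1)}$, so that
\begin{equation*}
2p_{i}^{(a)}-p_{i-1}^{(a)}-p_{i+1}^{(a)}=L_{i}^{(a)}+\bigl(m_{i}^{(a-1)}-2m_{i}^{(a)}+m_{i}^{(a+1)}\bigr).
\end{equation*}

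Since $L^{(a)}_i\geq 0$ always, dropping this non-negative term gives the inequality (\ref{4.2.3}) immediately. For the convexity statement I would simply specialize to $m_i^{(a)}=0$: the right-hand side of the last identity then equals $L_i^{(a)}+m_i^{(a-1)}+m_i^{(a+1)}\geq 0$, which is exactly (\ref{4.2.4}). The computation is routine throughout; the only points demanding care are getting the second difference of $\min(i,j)$ correct (in particular at the endpoint $i=1$, where $p_0^{(a)}=0$ is used) and reading off the Gram matrix entries together with the boundary convention that suppresses $m_i^{(0)}$ and $m_i^{(n)}$. I do not anticipate any genuine obstacle.
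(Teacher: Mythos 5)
Your proof is correct and is exactly the routine verification the paper has in mind: the paper gives no details beyond ``easily checked by using (\ref{F 4.1.2})'' (citing [Sc1, (3.4)]), and your second-difference identity $2\min(i,j)-\min(i-1,j)-\min(i+1,j)=\delta_{i,j}$ applied to (\ref{F 4.1.2}), together with the Cartan matrix entries of type $A_{n-1}$, is precisely that check. Your computation even yields the sharper identity $2p_{i}^{(a)}-p_{i-1}^{(a)}-p_{i+1}^{(a)}=L_{i}^{(a)}+m_{i}^{(a-1)}-2m_{i}^{(a)}+m_{i}^{(a+1)}$ (under your boundary convention $m_i^{(0)}=m_i^{(n)}=0$), from which both (\ref{4.2.3}) and (\ref{4.2.4}) follow by dropping nonnegative terms.
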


The following lemma is used in later discussions:
\begin{lem}\label{Lem 4.1.2}
Assume that $a_{1}, a_{2}, \ldots,a_{m}\in\mathbb{R}$ ($m\geq 3$) satisfy the condition $2a_{i}\geq a_{i-1}+a_{i+1}$ for $2\leq i\leq m-1.$ If $a_{1}\geq 0$ and $a_{m}\geq 0$, then $a_{i}\geq 0$ for $1\leq i\leq m$.
\end{lem}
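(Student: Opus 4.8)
The plan is to read the hypothesis $2a_{i}\geq a_{i-1}+a_{i+1}$ as a discrete concavity condition and then exploit the fact that a concave sequence with non-negative endpoints attains its minimum at an endpoint. First I would rewrite the inequality in terms of the first differences $b_{i}=a_{i+1}-a_{i}$ ($1\leq i\leq m-1$): the condition $2a_{i}\geq a_{i-1}+a_{i+1}$ is equivalent to $a_{i}-a_{i-1}\geq a_{i+1}-a_{i}$, that is $b_{i-1}\geq b_{i}$, so the differences form a non-increasing sequence $b_{1}\geq b_{2}\geq\cdots\geq b_{m-1}$. This is the precise sense in which $(a_{i})$ is concave, and it is the structural fact that drives the whole argument.

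The key step is to locate the minimum. Let $k$ be the smallest index at which $\min_{1\leq i\leq m}a_{i}$ is attained; the goal is to show $a_{k}\geq 0$. Arguing by contradiction, suppose $a_{k}<0$. Since $a_{1}\geq 0>a_{k}$ and $a_{m}\geq 0>a_{k}$, neither endpoint achieves the minimum, so $k$ is interior, i.e. $2\leq k\leq m-1$, and the convexity inequality applies at $k$. From $a_{k}\leq a_{k-1}$ and $a_{k}\leq a_{k+1}$ I get $a_{k-1}+a_{k+1}\geq 2a_{k}$, while the hypothesis gives $2a_{k}\geq a_{k-1}+a_{k+1}$; hence $a_{k-1}+a_{k+1}=2a_{k}$, which together with $a_{k-1},a_{k+1}\geq a_{k}$ forces $a_{k-1}=a_{k+1}=a_{k}$. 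In particular $a_{k-1}$ equals the minimum value while $k-1<k$, contradicting the minimality of $k$. Therefore $a_{k}\geq 0$, and since $a_{k}$ is the smallest entry, every $a_{i}\geq 0$.

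I do not expect a genuine obstacle, as this is a standard discrete concavity fact; the only thing to get right is handling the endpoints so that the chosen minimizing index is guaranteed to be interior. An alternative formulation I could present instead is the chord estimate: set $\ell_{i}=a_{1}+\tfrac{i-1}{m-1}(a_{m}-a_{1})$, note that $\ell_{i}\geq 0$ as a convex combination of the non-negative endpoints, and check that $d_{i}=a_{i}-\ell_{i}$ again satisfies $2d_{i}\geq d_{i-1}+d_{i+1}$ with $d_{1}=d_{m}=0$; proving $d_{i}\geq 0$ would then show $a_{i}\geq\ell_{i}\geq 0$. However, establishing $d_{i}\geq 0$ reduces to exactly the same minimum argument, so the contradiction proof above is the most economical route. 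I would explicitly avoid a naive induction on $m$ obtained by deleting an endpoint, because deleting $a_{m}$ promotes $a_{m-1}$ to a new endpoint whose non-negativity is not yet available; this mismatch is precisely why the global-minimum argument is preferable to induction.
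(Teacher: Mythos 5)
Your proof is correct, but it takes a genuinely different route from the paper's. The paper also starts from the observation that the differences form a non-increasing chain $a_2-a_1\geq a_3-a_2\geq\cdots\geq a_m-a_{m-1}$, but it then proves the single claim $a_{m-1}\geq 0$ by contradiction: if $a_{m-1}<0\leq a_m$, the last difference is strictly positive, hence by the chain every difference is strictly positive, giving $0\leq a_1<a_2<\cdots<a_{m-1}<0$, a contradiction. With $a_1\geq 0$ and $a_{m-1}\geq 0$ both in hand, the paper concludes by induction on $m$ applied to $a_1,\ldots,a_{m-1}$. Your argument instead selects the smallest index $k$ realizing the global minimum and derives a contradiction from the concavity inequality at the single index $k$; it needs no induction, and it uses the hypothesis only locally at $k$ rather than through the full monotone chain of differences. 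It is worth pointing out that the endpoint-deleting induction you explicitly reject as ``naive'' is essentially what the paper does --- except that the paper's induction is sound precisely because the preliminary claim $a_{m-1}\geq 0$ secures the new right endpoint before the last term is removed, which is exactly the obstruction you identified. Both proofs are elementary and of comparable length: yours trades the induction for a global extremal argument, while the paper trades the extremal argument for a one-step peeling followed by induction.
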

\begin{proof}
We have $a_{i}-a_{i-1}\geq a_{i+1}-a_{i}$ for $2\leq i\leq m-1$, i.e.,
\begin{equation}\label{4.2.5}
a_{2}-a_{1}\geq a_{3}-a_{2}\geq \cdots \geq a_{m}-a_{m-1}.
\end{equation}
We claim that $a_{m-1}\geq 0$. In fact, assume not. Then by (\ref{4.2.5}), we obtain $$0\leq a_{1}<a_{2}< \cdots<a_{m-1}<0,$$ which gives a contradiction. Hence the claim holds. Now we have $a_1\geq 0$ and $a_{m-1}\geq 0,$ and the lemma follows by induction on $m.$
\end{proof}

\subsection{}In the present paper, we only focus on the case where $B=B(\mu)$ for $\mu\in\mathscr{P}_{N }.$ Then the corresponding multiplicity array $L(\mu)=(L^{(a)}_i)$  is given by $L^{(a)}_i=\delta_{a,1}m_i(\mu).$
Let $(\nu,J)$ be a $L(\mu)$-rigged configuration. We set $\nu^{(0)}=\mu$ and $\nu^{(n)}=\emptyset.$ By using (\ref{3.4.3}), it is easy to check that
\begin{align}
&\mathrm{wt}(\nu)=\sum_{a=1}^n(|\nu^{(a-1)}|-|\nu^{(a)}|)\overline{\epsilon}_a,\label{4.3.1}\\
&p_{i}^{(a)}=Q_{i}^{(a-1)}-2Q_{i}^{(a)}+Q_{i}^{(a+1)}, \text{ for } (a,i)\in\mathscr{H}.
\end{align}

\subsection{} A $L(\mu)$-rigged configuration $(\nu,J)$ is said to be valid if $x\leq p^{(a)}_i$ for any $a\in I_0,$ $(i,x)\in (\nu^{(a)},J^{(a)}).$
On the other hand, $(\nu,J)$ is said to be highest weight if $x\geq 0$ for all its labels. In particular, a valid highest weight $L(\mu)$-rigged configuration $(\nu,J)$ satisfies the condition
\begin{equation}\label{4.4.1}
0\leq x\leq p^{(a)}_i
\end{equation}
for any $a\in I_0,$ $(i,x)\in (\nu^{(a)},J^{(a)}).$

Let $(\nu,J)$ be a valid highest weight rigged configuration. For $a\in I_0,$ let $k_a$ be the largest length among the strings in $(\nu^{(a)},J^{(a)}).$ Then $Q_{k_a}(\nu^{(a)})=|\nu^{(a)}|.$ We have
\begin{eqnarray*}
0\leq p_{k_a}^{(a)} &=&Q_{k_a}^{(a-1)}-2Q_{k_a} ^{(a)}+Q_{k_a}^{(a+1)} \\
&=&Q_{k_a}^{(a-1)}-2|\nu^{(a)}|+Q_{k_a}^{(a+1)}  \\
&\leq&|\nu^{(a-1)}|-2|\nu^{(a)}|+|\nu^{(a+1)}|.
\end{eqnarray*}
Hence $(|\nu^{(0)}|-|\nu^{(1)}|,|\nu^{(1)}|-|\nu^{(2)}|,\ldots,|\nu^{(n-2)}|-|\nu^{(n-1)}|,|\nu^{(n-1)}|)\in \mathscr{P}_{N},$ i.e., $\mathrm{wt}(\nu,J)$ is a dominant weight.

For $\lambda\in\mathscr{P}_{N}$ such that $l(\lambda)\leq n,$ let $\mathrm{RC}(\mu,\lambda)$ denote the set of valid highest weight $L(\mu)$-rigged configurations such that $\mathrm{wt}(\nu,J)=\lambda.$

\begin{lem}\label{lem 4.4.1}
For $\lambda\in\mathscr{P}_{N }$ with $l(\lambda)\leq n,$ let $\mathrm{QM}(\mu,\lambda)$ be the set of $L(\mu)$-rigged configurations satisfying the condition
\begin{enumerate}
\item[(1)]
$|\nu^{(a)}|=\sum_{j>a}\lambda_j$ for $a\in I_0,$

\item[(2)]
 $p^{(a)}_i\geq 0$ for all $(a,i)\in \mathscr{H},$

\item[(3)]
$0\leq x\leq p^{(a)}_i$ for any  $a\in I_0,$ $(i,x)\in(\nu^{(a)},J^{(a)}).$
\end{enumerate}
Then $\mathrm{QM}(\mu,\lambda)=\mathrm{RC}(\mu,\lambda).$
\end{lem}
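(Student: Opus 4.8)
The plan is to prove the two inclusions separately, after first observing that conditions (1) and (3) defining $\mathrm{QM}(\mu,\lambda)$ are merely a restatement of the defining conditions of $\mathrm{RC}(\mu,\lambda)$, so that the genuine content of the lemma is the redundancy of condition (2).

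First I would record the dictionary between the conditions. By (\ref{4.3.1}) together with $\overline{\epsilon}_1+\cdots+\overline{\epsilon}_n=0$, and since both $\mathrm{wt}(\nu,J)=\sum_a(|\nu^{(a-1)}|-|\nu^{(a)}|)\overline{\epsilon}_a$ and $\lambda=\sum_a\lambda_a\overline{\epsilon}_a$ have coordinate sum $|\mu|=N$, their representatives in $\mathbb{Z}^n$ are forced to coincide; hence $\mathrm{wt}(\nu,J)=\lambda$ is equivalent to $|\nu^{(a-1)}|-|\nu^{(a)}|=\lambda_a$ for $1\le a\le n$, and, using $\nu^{(0)}=\mu$ and $\nu^{(n)}=\emptyset$, to condition (1), $|\nu^{(a)}|=\sum_{j>a}\lambda_j$. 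Likewise condition (3), $0\le x\le p^{(a)}_i$, is exactly the conjunction of the "valid" condition $x\le p^{(a)}_i$ with the "highest weight" condition $x\ge 0$. Thus a rigged configuration satisfies (1) and (3) if and only if it is a valid highest weight configuration of weight $\lambda$, i.e.\ lies in $\mathrm{RC}(\mu,\lambda)$. Since $\mathrm{QM}(\mu,\lambda)$ imposes only the extra condition (2), this gives the inclusion $\mathrm{QM}(\mu,\lambda)\subseteq\mathrm{RC}(\mu,\lambda)$ immediately.

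For the reverse inclusion the task is to show that (2) is automatic: for $(\nu,J)\in\mathrm{RC}(\mu,\lambda)$ one must verify $p^{(a)}_i\ge 0$ for \emph{every} $(a,i)\in\mathscr{H}$, including lengths $i$ with $m^{(a)}_i=0$. Fix $a$. At $i=0$ we have $p^{(a)}_0=0$; at each length $i$ with $m^{(a)}_i>0$ there is a string $(i,x)$ with $0\le x\le p^{(a)}_i$, so $p^{(a)}_i\ge 0$; and for $i$ exceeding all parts of $\nu^{(a-1)},\nu^{(a)},\nu^{(a+1)}$ the formula $p^{(a)}_i=Q^{(a-1)}_i-2Q^{(a)}_i+Q^{(a+1)}_i$ stabilizes to $|\nu^{(a-1)}|-2|\nu^{(a)}|+|\nu^{(a+1)}|=\lambda_a-\lambda_{a+1}\ge 0$, the last step using condition (1) and the fact that $\lambda$ is a partition. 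Hence $p^{(a)}_i\ge 0$ is known at $i=0$, at every string-length, and at one large value $i=M$. Between two consecutive such points $u<v$, every interior length satisfies $m^{(a)}_i=0$, so the convexity inequality $2p^{(a)}_i\ge p^{(a)}_{i-1}+p^{(a)}_{i+1}$ of Lemma \ref{Lem 4.1.1} holds there; applying Lemma \ref{Lem 4.1.2} to the finite sequence $p^{(a)}_u,p^{(a)}_{u+1},\ldots,p^{(a)}_v$ with its nonnegative endpoints then forces $p^{(a)}_i\ge 0$ throughout. Ranging over all intervals and all $a$ establishes (2), and hence $\mathrm{RC}(\mu,\lambda)\subseteq\mathrm{QM}(\mu,\lambda)$.

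The only delicate point is the bookkeeping in this last step: one must confirm that every length $i$ falls in some interval whose endpoints carry known nonnegativity and whose interior lengths all have vanishing multiplicity, with the "endpoint at infinity" supplied by the stabilization of $p^{(a)}_i$. I expect this to be routine once the three families of known-nonnegative lengths ($i=0$, the string lengths, and all large $i$) are laid out, with Lemma \ref{Lem 4.1.2} doing the actual work of propagating nonnegativity across the gaps.
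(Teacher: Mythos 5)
Your proposal is correct and takes essentially the same approach as the paper's proof: $\mathrm{QM}(\mu,\lambda)\subseteq\mathrm{RC}(\mu,\lambda)$ because conditions (1) and (3) already characterize $\mathrm{RC}(\mu,\lambda)$, and the reverse inclusion follows by propagating nonnegativity of $p^{(a)}_i$ from $i=0$, the string lengths, and the stabilized value $\lambda_a-\lambda_{a+1}\geq 0$ at large $i$, using the convexity of Lemma \ref{Lem 4.1.1} together with Lemma \ref{Lem 4.1.2}. The only difference is cosmetic: the paper spells out the boundary cases $a=1$ and $a=n-1$ of the large-$i$ computation, which you absorb into condition (1) via $\nu^{(0)}=\mu$ and $\nu^{(n)}=\emptyset$.
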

\begin{proof}
For a $L(\mu)$-rigged configuration $(\nu,J),$ $\mathrm{wt}(\nu,J)=\lambda$ if and only if $(\nu,J)$ satisfies the condition (1). Hence by (\ref{4.4.1}), the set $\mathrm{RC}(\mu,\lambda)$ is characterized by the properties (1), (2') and (3), where
\begin{enumerate}
\item[(2')]
 $p_i^{(a)}\geq 0$ for all $(a,i)\in\mathscr{H}$ such that $m_i^{(a)}>0.$
\end{enumerate}
Hence it is enough to show that $\mathrm{QM}(\lambda,\mu)\supseteq\mathrm{RC}(\mu,\lambda).$
Let $(\nu,J)\in \mathrm{RC}(\mu,\lambda).$  Note that $Q_{i}^{(a)}=|\nu^{(a)}|$ for all $a\in I_0\cup\{0\}$ if $i\gg 0.$  For such $i,$ we have
\begin{eqnarray}\label{4.4.2}
p_{i}^{(a)} &=&Q_{i}^{(a-1)}-2Q_{i} ^{(a)}+Q_{i}^{(a+1)} \\
&=&(\lambda _{a}+\lambda _{a+1}+\cdots )-2(\lambda _{a+1}+\lambda\nonumber
_{a+2}+\cdots )+(\lambda _{a+2}+\cdots ) \\ \nonumber
&=&\lambda _{a}-\lambda _{a+1}\geq 0\nonumber
\end{eqnarray}
if $1<a<n-1.$ This computation can be applied also for the case where $a=1$ or $a=n-1$ since $|\nu^{(0)}|=|\nu|=N=|\lambda|$ and $|\nu^{(n)}|=0=\Sigma_{j>n}\lambda_j.$
Now assume that $m_j^{(a)}=0.$ If there exists $k<j$ and $j<l$ such that $k=\nu^{(a)}_p,$ $l=\nu^{(a)}_q$ for some $p,q$, then $p_{k}^{(a)}\geq 0,p_{l}^{(a)}\geq 0$ by the condition (2'). Here we may assume that $m^{(a)}_u=0$ for $k<u<l.$ Then by Lemma \ref{Lem 4.1.1}, $p_k^{(a)},p_{k+1}^{(a)},\ldots,p_l^{(a)}$ satisfy the condition of Lemma \ref{Lem 4.1.2}. It follows, by Lemma \ref{Lem 4.1.2}, that $p_u^{(a)}\geq 0$ for $k\leq u\leq l.$ If there does not exist $j<l$ such that $l=\nu^{(a)}_q$ for some $q,$ then we can choose $l$ large enough so that $p_l^{(a)}\geq 0$ by (\ref{4.4.2}). Since we can find $k<j$ such that $k=\nu^{(a)}_p$ for some $p$ or otherwise $k=0,$ a similar argument as above can be applied.
\end{proof}

\subsection{} In Schilling [Sc1, 3.2], the Kashiwara operators $e_a,$ $f_a,$ ($a\in I_0$) are defined on the set of valid $L(\mu)$-rigged configurations.
Here, we just point out that for a valid $L(\mu)$-rigged configuration $(\nu,J)$ and $a\in I_0,$ $e_a(\nu,J)=0$ if and only if $x\geq 0$ for all the strings $(i,x)$ of $(\nu,J)^{(a)}.$ For other details, refer to [Sc1, 3.2].
Let $\mathrm{RC}(\mu)$ denote the set of valid $L(\mu)$-rigged configurations generated from all the valid highest weight $L(\mu)$-rigged configurations by the application of the operators $f_{a}$ and $e_{a},$ $a\in I_0.$   Define
\begin{align}
&\varepsilon _{a}(\nu ,J)=\max \{k \mid e_{a}^{k}(\nu ,J)\neq 0\}\label{F 4.8},\\
&\varphi_{a}(\nu ,J)=\max \{k\mid f_{a}^{k}(\nu ,J)\neq 0\}.\label{F 4.9}
\end{align}

The following result is essentially contained in Theorem 3.7 in [Sc1].
\begin{thm}[{[Sc1]}]\label{thm n4.3.1}
The maps $$\mathrm{wt}:\mathrm{RC}(\mu)\rightarrow \overline{P},\text{ } e_{i},f_{i}:\mathrm{RC}(\mu)\rightarrow \mathrm{RC}(\mu)\sqcup \{0\}, \text{ }\varepsilon _{i},\varphi_{i}:\mathrm{RC}(\mu)\rightarrow\mathbb{Z},\text{ } i\in I_0$$ define a $U_{q}(\mathfrak{g}_0)$-crystal structure on $\mathrm{RC}(\mu)$.
\end{thm}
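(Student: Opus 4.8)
The plan is to verify directly the seven crystal axioms of 3.1 for the maps $\mathrm{wt},\varepsilon_i,\varphi_i,e_i,f_i$ on $\mathrm{RC}(\mu)$, importing the genuinely combinatorial inputs from Schilling's analysis in [Sc1] and keeping the formal bookkeeping self-contained. First I note that closure is automatic: by the very definition of $\mathrm{RC}(\mu)$ as the set generated from the valid highest weight $L(\mu)$-rigged configurations by the operators $e_a,f_a$ ($a\in I_0$), these operators map $\mathrm{RC}(\mu)$ into $\mathrm{RC}(\mu)\sqcup\{0\}$, and since Schilling's operators preserve validity, every element of $\mathrm{RC}(\mu)$ is again a valid $L(\mu)$-rigged configuration. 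The only genuinely new feature compared with [Sc1] is that here $\varepsilon_a$ and $\varphi_a$ are \emph{defined} as the maximal powers (\ref{F 4.8}) and (\ref{F 4.9}); thus the crystal we produce is semi-regular by construction, and the real task is to check that these max-definitions are compatible with the Kashiwara-operator and weight data.

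The two facts I would import from [Sc1] are: (i) for $a\in I_0$ the operators $e_a,f_a$ are mutually inverse where defined, i.e. $f_a(\nu,J)=(\nu',J')$ if and only if $e_a(\nu',J')=(\nu,J)$, which is axiom (6); and (ii) $e_a$ (resp. $f_a$) removes (resp. adds) a single box from $\nu^{(a)}$, leaving every other $\nu^{(b)}$ unchanged, and $\varphi_a$ is finite by the validity bound $x\leq p_i^{(a)}$. Granting (ii), axioms (2) and (3) follow at once from the weight formula (\ref{F 4.1.1}): its first summand depends only on the multiplicity array $L(\mu)$, while $\sum_i i\,m_i^{(a)}=|\nu^{(a)}|$ changes by $\mp 1$, so $\mathrm{wt}(e_a(\nu,J))=\mathrm{wt}(\nu,J)+\alpha_a$ and $\mathrm{wt}(f_a(\nu,J))=\mathrm{wt}(\nu,J)-\alpha_a$. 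Axiom (7) is vacuous, since the power $k=0$ is allowed and hence $\varphi_a\geq 0$ is never $-\infty$. For axioms (4) and (5), observe that $e_a$ strictly decreases $|\nu^{(a)}|$, whence $\varepsilon_a(\nu,J)\leq|\nu^{(a)}|$ is finite; then the identities $e_a^{k}(e_ab)=e_a^{k+1}b$ and, using (i), $f_a^{k}(e_ab)=f_a^{k-1}b$ for $k\geq 1$ give immediately $\varepsilon_a(e_ab)=\varepsilon_a(b)-1$ and $\varphi_a(e_ab)=\varphi_a(b)+1$ whenever $e_ab\neq 0$, and symmetrically for $f_a$.

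The substantive step is axiom (1), $\langle h_a,\mathrm{wt}(\nu,J)\rangle=\varphi_a(\nu,J)-\varepsilon_a(\nu,J)$. I would first record the closed form $\langle h_a,\mathrm{wt}(\nu,J)\rangle=p_\infty^{(a)}:=\lim_{i\to\infty}p_i^{(a)}$: pairing (\ref{F 4.1.1}) with $h_a$ and using $\langle h_a,\overline{\Lambda}_b\rangle=\delta_{ab}$ and $\langle h_a,\alpha_b\rangle=(\alpha_a,\alpha_b)$ reproduces exactly the $i\to\infty$ limit of (\ref{F 4.1.2}). Next, by axioms (2)--(5) just verified, the quantity $D(b)=\langle h_a,\mathrm{wt}(b)\rangle-\varphi_a(b)+\varepsilon_a(b)$ is unchanged when $b$ is replaced by $e_ab$ (the three terms change by $+2,-1,-1$, since $(\alpha_a,\alpha_a)=2$), so $D$ is constant along each $a$-string. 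As $\varepsilon_a$ is finite, each $b$ is joined through $e_a$ to the $a$-highest element $b_0=e_a^{\varepsilon_a(b)}(b)$, where $\varepsilon_a(b_0)=0$; it therefore suffices to prove $\varphi_a(b_0)=p_\infty^{(a)}$ whenever $b_0$ is $a$-highest, i.e. whenever every string of $(\nu^{(a)},J^{(a)})$ carries a nonnegative label.

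This final count is the main obstacle: everything above is formal once it is in hand. It asserts that the maximal number of admissible $f_a$-slides on an $a$-highest configuration equals $p_\infty^{(a)}$, which is precisely the combinatorial content of Schilling's construction of $f_a$ together with the validity bound $x\leq p_i^{(a)}$. I would establish it by tracking, as in [Sc1], how each successive $f_a$-application lowers the controlling label and raises the governing vacancy number until the validity constraint is first saturated, the number of available steps being dictated by $p_\infty^{(a)}=\langle h_a,\mathrm{wt}(b_0)\rangle$; combined with the constancy of $D$ this yields axiom (1) for all of $\mathrm{RC}(\mu)$ and completes the verification that $\mathrm{wt},e_i,f_i,\varepsilon_i,\varphi_i$ define a $U_q(\mathfrak{g}_0)$-crystal structure.
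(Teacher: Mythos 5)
The paper itself gives no proof of this theorem: it is quoted outright, with the preceding sentence stating that the result ``is essentially contained in Theorem 3.7 in [Sc1]'', so the paper's proof is a bare citation. Your proposal therefore does strictly more than the paper, and its formal content is correct: closure under $e_a,f_a$ is indeed built into the definition of $\mathrm{RC}(\mu)$; axioms (2) and (3) follow from the weight formula (\ref{F 4.1.1}) once one grants that $e_a$ (resp.\ $f_a$) changes only $|\nu^{(a)}|$, by one box; axioms (4), (5), (7) are formal consequences of the max-definitions (\ref{F 4.8}), (\ref{F 4.9}) together with the imported axiom (6); the identity $\langle h_a,\mathrm{wt}(\nu,J)\rangle=p^{(a)}_{\infty}$ (the stable value of the vacancy numbers) is a correct reading of (\ref{F 4.1.1}) against (\ref{F 4.1.2}); and the constancy of $D$ along $a$-strings correctly reduces axiom (1) to $a$-highest elements, for which $e_a(\nu,J)=0$ exactly when all labels of $(\nu^{(a)},J^{(a)})$ are nonnegative, as recorded in 4.5 of the paper. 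The one place where your argument stops short of a proof is precisely the step you yourself flag: $\varphi_a(b_0)=p^{(a)}_{\infty}$ for $a$-highest $b_0$. This is not a formal consequence of anything you have set up; it is the combinatorial heart of Schilling's Theorem 3.7, established in [Sc1] via explicit closed formulas for $\varepsilon_a$ and $\varphi_a$ (essentially $\varepsilon_a=-\min(0,x_{\min})$ and $\varphi_a=p^{(a)}_{\infty}-\min(0,x_{\min})$, with $x_{\min}$ the smallest label occurring in $(\nu^{(a)},J^{(a)})$), and proving those formulas requires the detailed tracking of how each application of $f_a$ moves labels and vacancy numbers --- the analysis you gesture at but do not carry out. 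Since the paper imports the entire theorem from [Sc1], importing that single count from [Sc1] is consistent with the paper's own standard, and your write-up has the merit of isolating exactly which combinatorial facts carry the load; but if the intent was a self-contained proof, that final count is the gap that would still have to be closed.
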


\subsection{}In [KSS], a bijection
\begin{equation}
\Phi: B(\mu)\rightarrow \mathrm{RC}(\mu),
\end{equation} was constructed, called the rigged configuration bijection. In [Sa], Sakamoto proved that

\begin{thm}[{[Sa, Theorem 4.1]}]\label{n thm4.3.2}
$\Phi$ commutes with Kashiwara operators. In particular, $\Phi$ gives a $U_{q}(\mathfrak{g}_0)$-crystal isomorphism.
\end{thm}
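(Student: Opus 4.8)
The plan is to upgrade the bijection $\Phi$ to a crystal isomorphism, so the whole statement reduces to the single intertwining identity
\[
\Phi(f_i b)=f_i\,\Phi(b)\qquad (b\in B(\mu),\ i\in I_0).
\]
Indeed, weight preservation $\mathrm{wt}(\Phi(b))=\mathrm{wt}(b)$ is part of the KSS bijection and is encoded by the weight formula (\ref{F 4.1.1}); and once commutation with $f_i$ is known, commutation with $e_i$ follows because $f_i$ and $e_i$ are mutually inverse partial maps on both $B(\mu)$ and $\mathrm{RC}(\mu)$ (if $e_ib=b'\neq 0$ then $f_ib'=b$, whence $e_i\Phi(b)=\Phi(b')$; the case $e_ib=0$ is forced by bijectivity of $\Phi$). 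The matching of $\varepsilon_i$ and $\varphi_i$ then follows because both crystals are semiregular, so that these functions merely record the position of an element along its $i$-string, a datum $\Phi$ now respects. I would also note, as an anchor, that the restriction of $\Phi$ to highest-weight elements is already the bijection $\Phi_\lambda\colon P(B(\mu),\lambda)\isom\mathrm{RC}(\mu,\lambda)$ recorded in (\ref{0.2.6}); hence the fiberwise picture matches at the top, and what genuinely remains is the local commutation above.

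First I would recall the recursive description of $\Phi$ from [KSS]: it is assembled by iterating an elementary map $\delta$ that removes the leftmost box of $B(\mu)$ and simultaneously shortens a distinguished chain of singular strings running across the nodes of $I_0$ in the rigged configuration. Since $\Phi$ is this composition of $\delta$-steps, it is enough to understand how a single application of $\delta$ interacts with $f_i$, tracking $f_i$ through the recursion (it may migrate to a different tensor factor once a box is stripped). The plan is then to prove a local commutation lemma for $\delta$: applying $f_i$ to any tensor factor other than the box being removed leaves the singular-string selection of $\delta$ unchanged, so $\delta$ and $f_i$ commute on those factors by inspection; the only subtle case is when the reduced $i$-signature forces $f_i$ to act on the very box that $\delta$ removes.

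To organize the argument I would induct on the number $N$ of boxes, using the crystal embedding $B(\mu)\hookrightarrow (B^{1,1})^{\otimes N}$ already exploited in the proof of Proposition \ref{thm 3.3.3}, together with the matching box-splitting map on the rigged-configuration side, under which $\Phi$ is compatible. This reduces the general statement to the base case $(B^{1,1})^{\otimes N}$, where every tensor factor is a single letter, $\delta$ peels off one letter at a time, and the commutation of $\delta$ with $f_i$ can be verified directly from the signature rule for tensor products and the explicit change of the selected string.

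The hard part will be the boundary case of the local commutation lemma, namely when $f_i$ acts on the box that $\delta$ is about to remove, changing the letter $i{+}1$ into $i$ (or conversely under $e_i$). There one must check that the chain of singular strings selected by the $\delta$-algorithm is transported correctly through this change; this is a propagation argument through the successive vacancy numbers $p^{(a)}_i$ and relies essentially on the convexity inequality (\ref{4.2.4}) of Lemma \ref{Lem 4.1.1}, which controls where singular strings can sit when $m^{(a)}_i=0$. Making this bookkeeping close uniformly in $n$, and across the seam between the factor carrying the $\delta$-box and its neighbor, is where the real work lies.
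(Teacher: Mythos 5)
Before anything else, note that the paper does not prove this statement: Theorem \ref{n thm4.3.2} is imported wholesale from Sakamoto [Sa, Theorem 4.1] (with closely related results in [DS]), and is used downstream as a black box. So there is no internal proof to compare you against; your proposal has to be judged as a standalone proof of Sakamoto's theorem, and as such it has a genuine gap.

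Your formal reductions are fine: deducing $e_i$-commutation from $f_i$-commutation plus bijectivity works, weight preservation is built into the KSS bijection, and $\varepsilon_i,\varphi_i$ are handled by semiregularity (indeed (\ref{F 4.8}) and (\ref{F 4.9}) define them that way on $\mathrm{RC}(\mu)$). Your overall plan --- factor $\Phi$ into elementary $\delta$-steps, prove a local commutation lemma, reduce to $(B^{1,1})^{\otimes N}$ via the splitting maps --- is also the right shape of argument. The gap is that the local commutation lemma is not something you prove; it is essentially the entire content of Sakamoto's (88-page) paper. Worse, the one case you dismiss as holding ``by inspection'' is false as stated: when $f_i$ acts on a tensor factor other than the box $\delta$ removes, the rigged configuration still changes --- $f_i$ alters a label and a string length in $(\nu^{(i)},J^{(i)})$, which changes the vacancy numbers $p^{(i\pm 1)}_j$ of the neighboring nodes and hence which strings there are singular --- so the chain of singular strings selected by $\delta$ can be rerouted even in this ``easy'' case. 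Controlling that rerouting uniformly, not just in the boundary case you flag, is where all the work lies, and it cannot be dispatched by the convexity inequality (\ref{4.2.4}) alone. What you have is a correct high-level roadmap with the hard core asserted rather than established; citing [Sa], as the paper does, is the honest alternative.
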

\begin{rem}
\emph{Here we shall explain the construction of the map $\Psi.$ Recall the bijection $\Psi:\mathrm{W}(\mu)\rightarrow B(\mu)$ defined in 3.4. We define a map $\psi:\mathrm{W}(\mu)\rightarrow B(\mu)$ by $\psi=\Phi\circ\Psi.$ Then by Theorem \ref{cor 3.3.4} and Theorem \ref{n thm4.3.2}, $\psi$ gives a $U_{q}(\mathfrak{g}_0)$-crystal isomorphism. By the bijection $\Psi,$ the construction of $\Phi$ is easily translated to the construction of $\psi,$ step by step. So here we only explain the construction of $\psi.$ The verification of $\psi=\Phi\circ \Psi$ is easy.}

\emph{For a partition $\lambda$ and an integer $m$ such that $1\leq m\leq |\lambda|,$ we define $\lambda-[m]$ inductively as follows; put $\lambda-[1]=(\lambda_1,\ldots,\lambda_{k-1},\lambda_k-1)$ if $\lambda=(\lambda_1,\ldots,\lambda_k)$ with $\lambda_k>0$ and define $\lambda-[m]=(\lambda-[m-1])-[1]$ for $m\geq 2.$ Note that $\lambda-[m]$ is the partition whose Young diagram is
obtained by omitting $m$ boxes from $\lambda $ from bottom to top, right
to left.}

\emph{For a rigged configuration $(\nu,J)$, it is helpful to formally regard the rigged partition $\nu^{(k)}$ as the complement of the ($l(\nu^{(k)})+1$)-th row of length zero, which corresponds to the singular string of length zero. We denote such string by $S^{(k)}_{0}$.}

\emph{Let $\BBw=(w_{n},w_{n-1},\cdots ,w_{1})\in \mathrm{W}(\mu).$ We say $\BBw$
is of rank $r$, denoted by $\mathrm{rank}(\BBw)=r$, if the maximal letter in the word $w_{n}\ast
w_{n-1}\ast \cdots \ast w_{1}$ first appears in the row $w_{r+1}$ in the
sequence $w_{1},\cdots ,w_{n}$. Assume that $w_{r+1}=a_p\cdots a_2a_1,$ and put $w'_{r+1}=a_p\cdots a_2.$ Set $\widetilde{\BBw}=(w_n,\ldots,w'_{r+1},\ldots,w_1).$
Then $\BBw$ determines a sequence $\BBw_0,\ldots,\BBw_N=\BBw,$ where $\BBw_{j-1}=\widetilde{\BBw}_j,$ for $1\leq j\leq N.$
Note that $\widetilde{\BBw}_i\in\mathrm{W}(\mu-[N-i]).$}

\emph{We construct $(\nu,J)=\psi(\BBw)$ inductively as follows.
Begin with $(\nu,J)_0=\emptyset\in\mathrm{RC}(\nu-[N]).$ Suppose that $(\nu,J)_{i-1}\in\mathrm{RC}(\mu-[N-i+1])$ has been constructed for $i\geq 1$. Assume that $\BBw_i$ is of rank $r.$}

\begin{enumerate}

\item[(1)]
 \emph{If $r=0,$ set $(\nu,J)_i=(\nu,J)_{i-1}.$}

\item[(2)] \emph{If $r>0,$ we choose a singular string $S^{(r)}=(i_r,x_r)$  of $(\nu^{(r)},J^{(r)})_{i-1}$ with largest length. If there exists no such a string, let $S^{(r)}=S^{(r)}_0$ be the string of length 0. Suppose that the string $S^{(k)}=(i_k,x_k)$ has been already chosen.  We choose $S^{(k-1)}=(i_{k-1},x_{k-1})$ to be a string with largest length among the singular strings $(i,x)$ of $(\nu^{(k-1)},J^{(k-1)})_{i-1}$ such that $i\leq i_k.$ If there exists no such string, let $S^{(k-1)}=S^{(k-1)}_0$ be the string of length 0.
$(\nu,J)_i$ is obtained from $(\nu,J)_{i-1}$ by adding the length of the chosen strings by one, setting them singular and keeping the remaining strings.}

\end{enumerate}
\emph{The process ends at $(\nu,J)_N.$ Then we put $(\nu,J)=(\nu,J)_N.$}
\end{rem}

\begin{rem}
\emph{In (2) of Remark 4.6.2, assume that $\mu-[|\mu|-i]=(\mu_{1},\ldots,\mu_{p-1},\mu'_{p}).$} \emph{Set $i_0=\mu'_{p}-1.$ Then it is easy to see $i_0\leq i_1.$}
\end{rem}

\begin{rem}
\emph{Let $\lambda\in\mathscr{P}_N.$ For $T\in\mathrm{Tab}(\lambda,\mu),$ let $\BBw_T\in \mathrm{W}(\mu)$ as in 3.6. Then $\mathrm{Tab}(\lambda,\mu)$ can be } \emph{identified with a subset of $\mathrm{W}(\mu)$ by the embedding $\mathrm{Tab}(\lambda,\mu)\hookrightarrow \mathrm{W}(\mu),$ $T\mapsto \BBw_T.$}
\emph{In [KR1], a bijection $\Pi_\lambda:\mathrm{Tab}(\lambda,\mu)\isom\mathrm{RC}(\mu,\lambda)$ was constructed.  On the other hand, the bijection} \emph{$B(\mu)\rightarrow\mathrm{RC}(\mu)$ induces a bijection $$P(B(\mu),\lambda)\isom\mathrm{RC}(\mu,\lambda),$$ hence combined with the bijection} \emph{$\mathrm{Tab}(\lambda,\mu)\isom\mathcal{P}(B(\mu),\lambda)$ given in (\ref{3.6.2}), we obtain a bijection } \emph{$\mathrm{Tab}(\lambda,\mu)\isom\mathrm{RC}(\mu,\lambda).$ This bijection is nothing but $\Pi_\lambda.$ Thus our map } \emph{$\psi:\mathrm{W}(\mu)\rightarrow\mathrm{RC}(\mu)$ can be viewed as an extension of $\Pi_\lambda$ to the whole set $\mathrm{W}(\mu).$}
\end{rem}

\subsection{}In what follows, we fix $\boldsymbol{\lambda}=(\lambda',\lambda'')\in \mathscr{P}_{N,2}$ with $\lambda'=(\lambda'_{1},\lambda'_{2},\ldots,\lambda'_s),$ $\lambda''=(\lambda''_{1},\lambda''_{2},\ldots,\lambda''_t)$ such that $s+t\leq n.$ Set
\begin{equation}
\mathrm{RC}(\mu,\boldsymbol{\lambda})=\{\psi(\BBw_T)\mid T\in \mathrm{Tab}(\boldsymbol{\lambda} ,\mu )\}.
\end{equation}
 By Proposition \ref{cor 3.4.2}, we have
\begin{equation}\label{F 4.15}
\mathrm{RC}(\mu,\boldsymbol{\lambda})\isom P(B(\mu),\boldsymbol{\lambda})
\end{equation} since $\psi$ is a crystal isomorphism.

Let $\lambda=(\lambda_1,\ldots,\lambda_n)$ and $\rho=(\rho_1,\ldots,\rho_n)$ be partitions such that $\rho\subset\lambda$ and that $|\lambda|-|\rho|=N.$ We further assume that $\rho$ is a rectangle type. We define $\mathrm{QM}(\mu,\lambda- \rho)$ as the set of $L(\mu)$-rigged configurations satisfying the conditions

\begin{enumerate}
\item[(1)]
 $|\nu ^{(a)}|=\sum\nolimits_{j\geq a+1}(\lambda _{j}-\rho _{j})$ for $a\in I_0,$
\item[(2)]
$\widetilde{p}_{i}^{(a)}\geq 0$ for all $(a,i)\in\mathscr{H},$ where
\begin{equation}
\widetilde{p}_{i}^{(a)}=p_i^{(a)}+\min(i,\rho_a-\rho_{a+1}).
\end{equation}

\item[(3)]
 $0\leq x\leq \widetilde{p}_{i}^{(a)}$ for any $a\in I_0$ and any string $(i,x)$ of $(\nu^{(a)},J^{(a)}).$

\end{enumerate}
In Corollary 1 of Theorem 2.1 in [KR1], Kirillov and Reshetikhin proved the following result.
\begin{thm}[{[KR, Corollary 1]}]\label{lem 4.5.1}
 $|\mathrm{Tab}(\lambda-\rho,\mu)|=|\mathrm{QM}(\mu,\lambda-\rho)|.$
\end{thm}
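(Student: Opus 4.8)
The plan is to show that both sides of the asserted equality are equal to $\sum_{\nu\in\mathscr{P}_N}c^{\lambda}_{\rho\nu}K_{\nu\mu}$, where $c^{\lambda}_{\rho\nu}$ is the Littlewood-Richardson coefficient and $K_{\nu\mu}=|\mathrm{Tab}(\nu,\mu)|$ is the Kostka number; this reduces the skew statement to the already-available straight-shape counts. For the tableau side the identity is immediate from the machinery already in place: the bijection $\widetilde{\Gamma}$ of Theorem \ref{thm 1.6.3} gives
\[
|\mathrm{Tab}(\lambda-\rho,\mu)|=\sum_{\nu\in\mathscr{P}_N}|\mathrm{Tab}^0(\lambda-\rho,\nu)|\cdot|\mathrm{Tab}(\nu,\mu)|,
\]
and by the standard identification $|\mathrm{Tab}^0(\lambda-\rho,\nu)|=c^{\lambda}_{\rho\nu}$ (see [M, I, 9], exactly as used for (\ref{nF 2.4})), together with $|\mathrm{Tab}(\nu,\mu)|=K_{\nu\mu}$, the right-hand side is precisely $\sum_{\nu}c^{\lambda}_{\rho\nu}K_{\nu\mu}$.

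The substance of the argument is to obtain the same sum for $|\mathrm{QM}(\mu,\lambda-\rho)|$, and here I would exploit the hypothesis that $\rho$ is rectangular. Writing $\rho=(c^r)$ we have $\rho_a-\rho_{a+1}=c\,\delta_{a,r}$, so that $\min(i,\rho_a-\rho_{a+1})=\delta_{a,r}\min(i,c)$. Comparing with the vacancy-number formula (\ref{F 4.1.2}), this shift is exactly the effect of enlarging the multiplicity array $L(\mu)$ by a single factor $B^{r,c}$: for $B'=B(\mu)\otimes B^{r,c}$ with $L'=L(B')$, the $L'$-vacancy number equals $p_i^{(a)}+\delta_{a,r}\min(i,c)=\widetilde{p}_i^{(a)}$. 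A direct weight computation, parallel to (\ref{4.3.1}), then shows that condition (1) defining $\mathrm{QM}(\mu,\lambda-\rho)$ is precisely the requirement $\mathrm{wt}(\nu,J)=\lambda$ for $L'$-rigged configurations, while conditions (2) and (3) say that $(\nu,J)$ is valid and highest weight. To pass from ``$\widetilde{p}_i^{(a)}\geq 0$ for all $i$'' to the weaker constraint at parts with $m_i^{(a)}>0$, I would repeat verbatim the convexity argument of Lemma \ref{lem 4.4.1}, invoking Lemmas \ref{Lem 4.1.1} and \ref{Lem 4.1.2}. This identifies $\mathrm{QM}(\mu,\lambda-\rho)$ with the set of valid highest weight $L'$-rigged configurations of weight $\lambda$.

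Once this identification is secured, the rigged configuration bijection $\Phi$ (Theorem \ref{n thm4.3.2}, in the general form of [KSS]) applied to $B'$ restricts to a bijection between these highest weight $L'$-rigged configurations and the $U_q(\mathfrak{g}_0)$-maximal vectors of weight $\lambda$ in $B'$, whence $|\mathrm{QM}(\mu,\lambda-\rho)|=[B(\mu)\otimes B^{r,c}:V(\lambda)]$. Finally, since $B^{r,c}\cong V((c^r))$ as a $U_q(\mathfrak{g}_0)$-crystal and $[B(\mu):V(\nu)]=K_{\nu\mu}$, the tensor-product multiplicity decomposes by the Littlewood-Richardson rule as $\sum_{\nu}K_{\nu\mu}\,c^{\lambda}_{\nu,(c^r)}=\sum_{\nu}c^{\lambda}_{\rho\nu}K_{\nu\mu}$, using the symmetry $c^{\lambda}_{\nu\rho}=c^{\lambda}_{\rho\nu}$. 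Comparing with the tableau side completes the proof.

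The main obstacle will be the middle step. The vacancy-number bookkeeping closes cleanly \emph{only} because $\rho$ is rectangular, so the shift $\min(i,\rho_a-\rho_{a+1})$ collapses to the contribution of a single Kirillov--Reshetikhin factor; for general $\rho$ no such reduction to one extra tensor factor is available, which is exactly why the hypothesis is imposed. Care is also needed in justifying $|\mathrm{QM}(\mu,\lambda-\rho)|=[B(\mu)\otimes B^{r,c}:V(\lambda)]$, since this uses the rigged configuration bijection for a tensor product involving a $B^{r,c}$ with $r>1$, which lies slightly beyond the crystals $B(\mu)=\bigotimes_i B^{1,\mu_i}$ treated explicitly in the present paper but is covered by [KSS]. (This is, of course, the content of [KR, Corollary 1]; the argument above reorganizes it as a reduction to the straight-shape case.)
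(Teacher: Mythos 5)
The paper offers no proof of this statement at all: it is imported wholesale from [KR1], where Kirillov and Reshetikhin establish it by a direct bijective algorithm between skew tableaux and rigged configurations. Your argument is therefore a genuinely different route, and as far as I can check it is correct: you reduce both sides to $\sum_{\nu}c^{\lambda}_{\rho\nu}K_{\nu\mu}$. The tableau side is exactly Theorem \ref{thm 1.6.3} combined with $|\mathrm{Tab}^{0}(\lambda-\rho,\nu)|=c^{\lambda}_{\rho\nu}$. On the rigged configuration side your central observation is sound: for $\rho=(c^{r})$ one has $\min(i,\rho_a-\rho_{a+1})=\delta_{a,r}\min(i,c)$, so by (\ref{F 4.1.2}) the numbers $\widetilde{p}^{(a)}_{i}$ are precisely the vacancy numbers of the multiplicity array of $B(\mu)\otimes B^{r,c}$; the weight bookkeeping also checks out (condition (1) in the definition of $\mathrm{QM}(\mu,\lambda-\rho)$ amounts to $|\nu^{(a)}|=N+c\min(a,r)-\sum_{j\leq a}\lambda_j$, which is exactly what $\mathrm{wt}(\nu,J)=\lambda$ gives for the enlarged array); and the passage between ``$\widetilde{p}^{(a)}_i\geq 0$ everywhere'' and ``$\widetilde{p}^{(a)}_i\geq 0$ where $m^{(a)}_i>0$'' is Lemma \ref{lem 4.4.1}'s convexity argument, which transfers because Lemma \ref{Lem 4.1.1} holds for arbitrary multiplicity arrays and the boundary values for large $i$ are $\lambda_a-\lambda_{a+1}\geq 0$. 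As for what each approach buys: yours needs only equality of cardinalities --- which is indeed all the paper ever uses (see the proof of Lemma \ref{lem 4.5.2}) --- and stays conceptually inside the crystal/rigged-configuration framework of Sections 3--4, but at the price of invoking the KSS bijection for a tensor factor $B^{r,c}$ with $r>1$, machinery strictly beyond the $B^{1,m}$ crystals the paper manipulates, together with the standard facts $B(\mu)\cong\bigoplus_{\nu}B(\nu)^{\oplus K_{\nu\mu}}$, $B^{r,c}\cong B((c^{r}))$ and the Littlewood--Richardson decomposition of tensor products; the cited KR proof is self-contained, purely combinatorial, and produces an explicit bijection rather than a count. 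Two minor points you should flag: the reduction tacitly requires $r\leq n-1$ so that $B^{r,c}$ exists as a KR crystal (if $r=n$ the shift vanishes and the statement degenerates to the straight-shape case already handled by Lemma \ref{lem 4.4.1} and the classical KKR count), and it is worth remarking that no circularity enters through [KSS], whose bijection is proved independently of the skew-shape corollary being derived here.
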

\subsection{} Let $\boldsymbol{\lambda}=(\lambda',\lambda'')\in \mathscr{P}_{N,2}$ be as in 4.7. We define $\mathrm{QM}(\mu,\boldsymbol{\lambda})$ as the set of  $L(\mu)$-rigged configurations $(\nu,J)$ satisfying the condition
\begin{enumerate}
\item[(1)]
 $\mathrm{wt}(\nu,J)=\boldsymbol{\lambda};$

\item[(2)]
 $0\leq x\leq p^{(a)}_{i}+\delta_{a,s}i$ for any $a\in I_0$ and any string $(i,x)$ of $(\nu^{(a)},J^{(a)}).$
\end{enumerate}

We have the following lemma

\begin{lem}\label{lem 4.5.2}
 Let $\boldsymbol{\lambda}=(\lambda',\lambda'')\in \mathscr{P}_{N,2},$ with $\lambda'=(\lambda'_1,\ldots,\lambda'_s),$ $\lambda''=(\lambda''_1,\ldots,\lambda''_t),$ such that $s+t\leq n.$ Then
\begin{equation}
|\mathrm{Tab}(\boldsymbol{\lambda},\mu)|=|\mathrm{QM}(\mu,\boldsymbol{\lambda})|.
\end{equation}
\end{lem}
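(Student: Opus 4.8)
The plan is to reduce the statement to Theorem \ref{lem 4.5.1} by realizing $\boldsymbol{\lambda}$ as a skew shape with a rectangular inner partition, and then to show that the two sets $\mathrm{QM}(\mu,\lambda-\rho)$ and $\mathrm{QM}(\mu,\boldsymbol{\lambda})$ actually coincide for a suitable choice of the offset parameter. First I would fix an integer $a\geq|\lambda''|$ and set $\lambda=\widetilde{\xi}_{\boldsymbol{\lambda},a}$ and $\rho=(a^s)$, so that $\lambda-\rho=\xi_{\boldsymbol{\lambda},a}$ is exactly the skew shape of 2.2, with $\rho$ of rectangle type, $\rho\subset\lambda$, $|\lambda|-|\rho|=N$ and $l(\lambda)=s+t\leq n$. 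Under the identification $\mathrm{Tab}(\boldsymbol{\lambda},\mu)=\mathrm{Tab}(\xi_{\boldsymbol{\lambda},a},\mu)$ recalled in 2.2, Theorem \ref{lem 4.5.1} immediately gives $|\mathrm{Tab}(\boldsymbol{\lambda},\mu)|=|\mathrm{QM}(\mu,\lambda-\rho)|$, and it remains only to prove $\mathrm{QM}(\mu,\lambda-\rho)=\mathrm{QM}(\mu,\boldsymbol{\lambda})$ as sets.

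The next step is to match the defining conditions. For the weight condition, a direct computation using (\ref{4.3.1}) together with the description of $\boldsymbol{\lambda}\in\overline{P}$ in 3.4 shows that $\mathrm{wt}(\nu,J)=\boldsymbol{\lambda}$ is equivalent to $|\nu^{(b)}|=\sum_{j\geq b+1}(\lambda_j-\rho_j)$ for all $b\in I_0$; in particular $|\nu^{(s)}|=|\lambda''|$. Since $\rho=(a^s)$, one has $\rho_b-\rho_{b+1}=a\,\delta_{b,s}$, hence $\widetilde{p}^{(b)}_i=p^{(b)}_i+\delta_{b,s}\min(i,a)$. Because $|\nu^{(s)}|=|\lambda''|\leq a$, every string of $(\nu^{(s)},J^{(s)})$ has length $i\leq a$, so $\min(i,a)=i$ at each string position. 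Thus condition (3) of $\mathrm{QM}(\mu,\lambda-\rho)$ reads exactly $0\leq x\leq p^{(b)}_i+\delta_{b,s}\,i$, which is condition (2) of $\mathrm{QM}(\mu,\boldsymbol{\lambda})$. Since $\mathrm{QM}(\mu,\boldsymbol{\lambda})$ imposes no further constraints, this already yields the inclusion $\mathrm{QM}(\mu,\lambda-\rho)\subseteq\mathrm{QM}(\mu,\boldsymbol{\lambda})$.

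The main obstacle is the reverse inclusion, since a configuration in $\mathrm{QM}(\mu,\boldsymbol{\lambda})$ a priori need not satisfy the global vacancy condition (2) of $\mathrm{QM}(\mu,\lambda-\rho)$, namely $\widetilde{p}^{(b)}_i\geq 0$ for all $(b,i)\in\mathscr{H}$. I plan to prove this condition redundant by the convexity argument already used in the proof of Lemma \ref{lem 4.4.1}. Adding the concave function $\delta_{b,s}\min(i,a)$ to $p^{(b)}_i$ preserves the inequality of Lemma \ref{Lem 4.1.1}, so $\widetilde{p}^{(b)}_i$ still satisfies $2\widetilde{p}^{(b)}_i\geq\widetilde{p}^{(b)}_{i-1}+\widetilde{p}^{(b)}_{i+1}$ whenever $m^{(b)}_i=0$. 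For the boundary values, $\widetilde{p}^{(b)}_0=0$, while for $i\gg0$ a computation as in (\ref{4.4.2}) combined with $|\nu^{(b)}|=\sum_{j\geq b+1}(\lambda_j-\rho_j)$ gives $\widetilde{p}^{(b)}_i=\lambda_b-\lambda_{b+1}\geq 0$. Feeding these boundary values and the positivity $\widetilde{p}^{(b)}_i\geq x\geq 0$ at the string positions into Lemma \ref{Lem 4.1.2}, exactly as in Lemma \ref{lem 4.4.1}, forces $\widetilde{p}^{(b)}_i\geq 0$ for every $i$.

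This establishes $\mathrm{QM}(\mu,\boldsymbol{\lambda})\subseteq\mathrm{QM}(\mu,\lambda-\rho)$ and hence the equality of the two sets, from which the cardinality statement follows. The delicate points to check carefully will be the choice $a\geq|\lambda''|$ (so that $\min(i,a)=i$ on all strings of $\nu^{(s)}$, which is what makes the two sets of string bounds literally identical) and the verification that the modified vacancy numbers $\widetilde{p}^{(b)}_i$ genuinely inherit the convexity needed to invoke Lemma \ref{Lem 4.1.2}; the remaining weight and boundary computations are routine.
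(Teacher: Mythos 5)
Your proposal is correct and follows essentially the same route as the paper's own proof: realize $\boldsymbol{\lambda}$ as the skew shape $\widetilde{\xi}_{\Bla,a}-(a^s)$, invoke Theorem \ref{lem 4.5.1}, and prove $\mathrm{QM}(\mu,\lambda-\rho)=\mathrm{QM}(\mu,\boldsymbol{\lambda})$ by matching the weight and string conditions (via $\min(i,a)=i$ at string positions) and then showing the global vacancy condition $\widetilde{p}^{(b)}_i\geq 0$ is automatic by the convexity argument of Lemma \ref{lem 4.4.1}. The only differences are cosmetic: the paper takes the rectangle width $m\geq N$ rather than $a\geq|\lambda''|$, and it splits the redundancy argument into the cases $i\geq m$ (direct computation) and $i<m$ (convexity), whereas you treat all $i$ uniformly via concavity of $\min(i,a)$ — both are valid.
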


\begin{proof}
We follow the notation in 2.2. Let us choose an integer $m\geq N,$ and consider the partitions $\xi=\widetilde{\xi}_{\Bla,m}$ and $\rho=(m^s).$ Then we can identify the set $\mathrm{Tab}(\Bla,\mu)$ with the set $\mathrm{Tab}(\xi-\rho,\mu).$ By Theorem \ref{lem 4.5.2}, $\mathrm{Tab}(\boldsymbol{\lambda},\mu)$ and $\mathrm{QM}(\mu,\xi-\rho)$ have the same cardinality. Hence, it is enough to show that $\mathrm{QM}(\mu,\boldsymbol{\lambda})=\mathrm{QM}(\mu,\xi-\rho).$
Let $(\nu,J)$ be a $L(\mu)$-rigged configuration. Note, by (\ref{4.3.1}), that $\mathrm{wt}(\nu,J)=\boldsymbol{\lambda}$ if and only if $|\nu ^{(a)}|=\sum\nolimits_{j\geq a+1}(\xi _{j}-\rho _{j})$. Hence we have $\mathrm{QM}(\mu,\xi-\rho)\subset \mathrm{QM}(\mu,\boldsymbol{\lambda}).$
In order to show the reverse inclusion $\mathrm{QM}(\mu,\boldsymbol{\lambda})\subset \mathrm{QM}(\mu,\xi-\rho),$ we shall show that if $(\nu,J)\in \mathrm{QM}(\mu,\boldsymbol{\lambda}),$ then $\widetilde{p}^{(a)}_{i}\geq 0$ for all $(a,i)\in\mathscr{H}$. By our choice of $\rho,$ we have $\widetilde{p}^{(a)}_{i}=p^{(a)}_{i}+\delta_{s,a}\min (i,m).$
Let $(b_{1},b_{2},\ldots, b_{s+t})=(\lambda'_{1},\lambda'_{2},\ldots, \lambda'_{s},\lambda''_{1},\ldots,\lambda''_{t}).$

First assume that $i\geq m.$ Since $|\nu^{(a)}|\leq N$ for $a\in I_0\cup \{0\},$ and $m\geq N,$ we see that $Q_i^{(a)}=|\nu^{(a)}|$ and $\min(i,m)=m.$ It follows, for $a\in I_0,$ that
\begin{eqnarray*}
\widetilde{p}^{(a)}_{i} &=&p_{i}^{(a)}+\delta _{a,s}\min (i,m) \\
&=&Q_{i}^{(a-1)}-2Q_{i}^{(a)}+Q_{i}^{(a+1)}+\delta _{a,s}m \\
&=&|\nu ^{(a-1)}|-2|\nu ^{(a)}|+|\nu ^{(a)}|+\delta _{a,s}m \\
&=&\begin{cases}
                b_a-b_{a+1}+m   &\quad\text{ if $a=s$}, \\
                b_a-b_{a+1}       &\quad\text{ if $a\neq s$. }
        \end{cases}
\end{eqnarray*}
If $a\neq s,$ $b_a-b_{a+1}$ coincides with $\lambda'_j-\lambda'_{j+1}$ or $\lambda''_j-\lambda''_{j+1}$ for some $j,$ hence $\widetilde{p}^{(a)}_{i}\geq 0.$ If $a=s,$ $b_a-b_{a+1}+m=\lambda'_s-\lambda''_1+m\geq 0$ since $m\geq N.$ Thus $\widetilde{p}^{(a)}_{i}\geq 0.$

Next assume that $i<m.$ If $m_i^{(a)}\neq 0,$ then $\widetilde{p}_{i}^{(a)}\geq 0$ by (2) in 4.8. Hence we may assume that $m_i^{(a)}=0.$ By applying (\ref{4.2.4}) to $p_i^{(a)}$, we have
\begin{eqnarray*}
2\widetilde{p}_{i}^{(a)} &=&2p_{i}^{(a)}+2\delta _{a,s}\min (i,m) \\
&=&2p_{i}^{(a)}+2\delta _{a,s}i \\
&\geq &p_{i-1}^{(a)}+p_{i+1}^{(a)}+2\delta _{a,s}i \\
&=&\left(p_{i-1}^{(a)}+\delta _{a,s}(i-1)\right)+\left(p_{i+1}^{(a)}+\delta
_{a,s}(i+1)\right) \\
&=&\widetilde{p}_{i-1}^{(a)}+\widetilde{p}_{i+1}^{(a)}.
\end{eqnarray*}
Since $\widetilde{p}_{i}^{(a)}\geq 0$ if $m_{i}^{(a)}\neq0$, by a similar argument as in the proof of Lemma \ref{lem 4.4.1}, we conclude that  $\widetilde{p}_{i}^{(a)}\geq 0.$  Thus $\widetilde{p}_{i}^{(a)}\geq 0$ for any $(a,i)\in\mathscr{H},$ and the lemma follows.
\end{proof}

\subsection{}Let $\BBw\in\mathrm{W}(\mu).$ Recall the construction of $(\nu,J)=\psi(\BBw)$ as given in Remark 4.6.2, in particular the definition of $(\nu,J)_i$ given there.  Let $s\in I_0$ be as in 4.7. For $j=0,\ldots,N$ we consider the following condition $P(\BBw,s,j).$
\begin{equation}\label{4.9.1}
P(\BBw,s,j)\text{: } 0\leq x+i\leq p^{(s)}_{i}+i \text{ holds for all the strings } (i,x) \text{ of } (\nu^{(s)},J^{(s)})_j.
\end{equation}

\begin{lem}\label{Lem 4.5.3}
Assume that $0\leq j< N.$ If $P(\BBw,s,j)$ holds, then $P(\BBw,s,j+1)$ follows.
\end{lem}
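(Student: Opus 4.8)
The plan is to prove the statement by induction along the construction of $(\nu,J)=\psi(\BBw)$ recalled in Remark 4.6.2, tracking only the $s$-th rigged partition. The passage from $(\nu,J)_j$ to $(\nu,J)_{j+1}$ is governed by $r=\mathrm{rank}(\BBw_{j+1})$: if $r=0$ nothing changes and $P(\BBw,s,j+1)$ coincides with $P(\BBw,s,j)$, so assume $r>0$, in which case exactly the partitions $\nu^{(1)},\dots,\nu^{(r)}$ are altered, each by extending a selected singular string $S^{(a)}=(i_a,x_a)$ to length $i_a+1$ and resetting its label to the new vacancy number, with $i_{s-1}\le i_s\le i_{s+1}$ whenever these are defined. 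I would first record how $p^{(s)}_i$ moves: extending a part of length $\ell$ in $\nu^{(a)}$ adds $[\,i\ge \ell+1\,]$ to $Q^{(a)}_i$, so by the formula for $p^{(a)}_i$,
\[
\Delta p^{(s)}_i=[\,s-1\le r\,]\,[\,i\ge i_{s-1}+1\,]-2[\,s\le r\,]\,[\,i\ge i_s+1\,]+[\,s+1\le r\,]\,[\,i\ge i_{s+1}+1\,].
\]
The argument then splits according to whether $r<s$, $r=s$, or $r>s$.

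When $r<s$ the strings of $\nu^{(s)}$ are untouched, so the lower bound $x+i\ge0$ is inherited verbatim; if $r\le s-2$ then $\Delta p^{(s)}_i=0$ and $P$ is unchanged, while if $r=s-1$ only the first term survives, giving $\Delta p^{(s)}_i\ge0$, so the upper bound is preserved as well. When $r\ge s$ a box is added to $\nu^{(s)}$, so the strings of $(\nu^{(s)},J^{(s)})_{j+1}$ are the old unchanged ones together with one new singular string of length $i_s+1$. For an unchanged string $(i,x)$ the lower bound is again inherited; for its upper bound I would use that $\Delta p^{(s)}_i<0$ forces $i>i_s$ (with $\Delta p^{(s)}_i=-1$ there), and that since $i_s$ is the largest relevant singular length in $\nu^{(s)}$, every such string is non-singular, whence $x<p^{(s)}_i|_j$ and, by integrality, $x\le p^{(s)}_i|_j-1\le p^{(s)}_i|_{j+1}$. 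The new string is singular, so its upper bound holds with equality.

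The remaining point, and the genuinely delicate one, is the lower bound $p^{(s)}_{i_s+1}|_{j+1}+(i_s+1)\ge0$ for the newly created string. Writing $\widehat p_i:=p^{(s)}_i+i$, the hypothesis $P(\BBw,s,j)$ gives $\widehat p_i|_j\ge0$ at every length carrying a string; and at lengths carrying no part the convexity estimate (\ref{4.2.4}) of Lemma \ref{Lem 4.1.1} makes $\widehat p|_j$ concave, so Lemma \ref{Lem 4.1.2}, applied on the intervals between consecutive strings exactly as in the proof of Lemma \ref{lem 4.5.2}, yields $\widehat p_i|_j\ge0$ for all $i$, hence $p^{(s)}_{i_s+1}|_j\ge -(i_s+1)$. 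In the cases $r>s$ with $i_s=i_{s+1}$ one computes $\Delta p^{(s)}_{i_s+1}=0$, so this bound is already exactly what is needed and the argument closes.

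The hard part will be the remaining critical cases ($r=s$, and $r>s$ with $i_s<i_{s+1}$), where $\Delta p^{(s)}_{i_s+1}=-1$ and the concave estimate above falls one unit short: it only gives $p^{(s)}_{i_s+1}|_{j+1}\ge -(i_s+2)$, whereas $P(\BBw,s,j+1)$ demands $p^{(s)}_{i_s+1}|_{j+1}\ge -(i_s+1)$, i.e. $\widehat p_{i_s+1}|_j\ge1$ rather than merely $\widehat p_{i_s+1}|_j\ge0$. The crux of the proof is therefore to rule out $\widehat p_{i_s+1}|_j=0$, and I expect the missing unit to come not from $P(\BBw,s,j)$ alone but from the structure of $(\nu,J)_j=\psi(\BBw_j)$ as an output of the algorithm, namely the maximality of the selected singular string $S^{(s)}$ together with the rank condition defining where the largest letter of $\BBw_{j+1}$ first occurs. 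Closing this one-unit gap by translating the singular-string selection rule into a strict lower bound on $\widehat p_{i_s+1}|_j$ (possibly after strengthening the inductive hypothesis to carry such strictness) is the main obstacle; the rest is the bookkeeping of the three rank cases above.
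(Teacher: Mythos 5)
Your setup is essentially the paper's: the same formula for the change of the vacancy numbers (the paper's (\ref{4.9.2})), the same observation that untouched strings of length in $(i_s,i_{s+1}]$ are non-singular by the selection rule and hence survive the drop of $p^{(s)}_i$ by one, and the same global estimate $\overline{p}^{(s)}_i+i\geq 0$ for \emph{all} $i$, deduced from $P(\BBw,s,j)$ via the concavity (\ref{4.2.4}) and Lemma \ref{Lem 4.1.2}. But you stop exactly where the real work begins: the strict inequality $\overline{p}^{(s)}_{i_s+1}+(i_s+1)>0$ in the critical cases ($r=s$, or $r>s$ with $i_s<i_{s+1}$) is left as "the main obstacle," with only a guess as to where the missing unit should come from. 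Since this is the entire content of the lemma beyond bookkeeping, the proposal has a genuine gap.

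Here is how the paper closes it, by a case analysis on the window $(i_s,i_{s+1}]$. (a) If $\nu^{(s)}(j)$ has a string of some length $k$ in the window, that string is non-singular, so $\overline{p}^{(s)}_k+k>x+k\geq 0$ is \emph{strict}; concave interpolation between $\overline{p}^{(s)}_{i_s}+i_s\geq 0$ and this strictly positive value forces strict positivity at $i_s+1$ (the paper's Cases 1--2). (b) If the window contains no string and $i_{s+1}<\infty$ (i.e.\ $r>s$), the missing unit comes from the \emph{neighboring} partition: the algorithm selected a string of length $i_{s+1}$ in $\nu^{(s+1)}$, so $m_{i_{s+1}}(\nu^{(s+1)}(j))\geq 1$, and this multiplicity enters the refined convexity inequality (\ref{4.2.3}) of Lemma \ref{Lem 4.1.1} (not merely (\ref{4.2.4})) at $i=i_{s+1}$; iterating (\ref{4.2.4}) across the window and substituting this refined bound (the paper's (\ref{4.9.3})--(\ref{4.9.4})) gives $2^{c+1}\bigl(\overline{p}^{(s)}_{i_s+1}+(i_s+1)\bigr)\geq \text{(nonnegative terms)}+m_{i_{s+1}}(\nu^{(s+1)}(j))>0$, where the nonnegativity of the remaining terms is exactly your global estimate. (c) If $r=s$, so $i_{s+1}=\infty$, one takes $u>i_s$ either at the first non-singular string above $i_s$ or large enough that $\overline{p}^{(s)}_u+u>0$ (it eventually grows linearly in $u$), and interpolates as in (a). Your intuition that the strictness should follow from the maximality of the selected singular string and the rank condition does cover (a) and (c), but the decisive mechanism in (b) -- the multiplicity term of $\nu^{(s+1)}$ feeding into the convexity bound for $p^{(s)}$ -- is a different idea, and it is what makes the induction close without any strengthening of the hypothesis $P(\BBw,s,j)$.
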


\begin{proof}
Write $(\nu,J)_k$ as $(\nu,J)_k=((\nu^{(1)}(k),J^{(1)}(k)),\ldots,(\nu^{(n)}(k),J^{(n)}(k))).$ We fix $j$ and put
$\overline{Q}^{(a)}_{i}=Q_{i}(\nu^{(a)}(j))$ and $Q^{(a)}_{i}=Q_i(\nu^{(a)}(j+1)),$ for simplicity. For $(a,i)\in\mathscr{H},$ let $\overline{p}^{(a)}_i$(resp. $p^{(a)}_i$) be the vacancy numbers with respect to $(\nu,J)_j$ (resp. $(\nu,J)_{j+1}$).
Put $\mathrm{rank}(\BBw_{j+1})=r.$ For $k=1,\ldots,r,$ let $S^{(k)}=(i_k,x_k),$ $1\leq k\leq r,$ be the string of $(\nu,J)_j$ chosen in the construction of $(\nu,J)_{j+1}$ in Remark 4.6.2. For $k>r,$ set $i_k=\infty.$ By expressing $\mu-[N-i]$ as $(\mu_1,\ldots,\mu_{p-1},\mu'_p),$ set $i_0=\mu'_p-1.$ Then $i_0\leq i_1\leq\cdots\leq i_n.$  For an integer $x,$ let $\delta(x)=1$ (resp. $0$) if $x>0$ (resp. $x\leq 0$).
Then $Q^{(a)}_{i}=\overline{Q}^{(a)}_{i}+\delta(i-i_{a})$ for each $a\in I_0.$ Hence
\begin{eqnarray*}
p_{i}^{(s)} &=&Q_{i}^{(s-1)}-2Q_{i}^{(s)}+Q_{i}^{(s+1)} \\
&=&(\overline{Q}_{i}^{(s-1)}+\delta (i-i_{s-1}))-2(\overline{Q}%
_{i}^{(s)}+\delta (i-i_{s}))+(\overline{Q}_{i}^{(s+1)}+\delta (i-i_{s+1})) \\
&=&(\overline{Q}_{i}^{(s-1)}-2\overline{Q}_{i}^{(s)}+\overline{Q}%
_{i}^{(s+1)})+(\delta (i-i_{s-1})-2\delta (i-i_{s})+\delta (i-i_{s+1})) \\
&=&\overline{p}_{i}^{(s)}+(\delta (i-i_{s-1})-2\delta (i-i_{s})+\delta
(i-i_{s+1})).
\end{eqnarray*}
Thus we obtain the following formulas.
\begin{equation}\label{4.9.2}
p^{(s)}_{i}=\overline{p}^{(s)}_{i}+\left\{
\begin{array}{ll}
0 &\mbox{if $i\leq i_{s-1}$,}\\
1 &\mbox{if $i_{s-1}<i\leq i_{s}$,}\\
-1&\mbox{if $i_{s}<i\leq i_{s+1}$,}\\
0 &\mbox{if $i_{s+1}\leq i.$}
\end{array}
\right.
\end{equation}
Since $0\leq x\leq p_i^{(s)},$ in order to show $P(\BBw,s,j+1),$ it is enough to see, by (\ref{4.9.2}), that $\overline{p}^{(s)}_{i}+i>0$ for $(i,x)\in (\nu,J)^{(s)}_i$ with $i_{s}<i\leq i_{s+1}.$ We prove this separately according to the following 4 cases. In Case 1-3, it is assumed that $i_{s+1}\neq\infty,$ while in Case 4, assumed that $i_{s+1}=\infty.$

\par\smallskip\noindent\textbf{Case 1. $i_{s}+1<i.$}

We have $m_{i}(\nu^{(s)}(j))>0$ in this case. The strings $(i,x)$ of $(\nu^{(s)},J^{(s)})_j$ must be nonsingular, i.e. $\overline{p}^{(s)}_{i}+i>x+i\geq 0.$

\par\smallskip\noindent\textbf{Case 2.} $i=i_{s}+1$, $m_{i}(\nu^{(s)}(j))=0$ and $m_{k}(\nu^{(s)}(j))\neq0$ for some $i\leq k\leq i_{s+1}.$

In this case, the strings $(k,y)$ of $\nu^{(s)}(j)$ must be nonsingular, i.e. $\overline{p}^{(s)}_{k}+k>y+k\geq 0.$ Use a similar argument as in the proof of Lemma \ref{lem 4.4.1}, we have $\overline{p}^{(s)}_{i}+i>0.$

\par\smallskip\noindent\textbf{Case 3.} $i=i_{s}+1$ and $m_{k}(\nu^{(s)}(j))=0$, for $i\leq k\leq i_{s+1}.$

Let $c=i_{s+1}-i.$ By applying (\ref{4.2.4}) in Lemma \ref{Lem 4.1.1} successively, we have
\begin{eqnarray}\label{4.9.3}
2^{c+1}\overline{p}_{i}^{(s)} &=&2^{c}(2\overline{p}%
_{i}^{(s)}) \\ \nonumber
&\geq& 2^{c}\overline{p}_{i-1}^{(s)}+2^{c-1}(2%
\overline{p}_{i+1}^{(s)}) \\ \nonumber
&\cdots& \\ \nonumber
&\geq &2^{c}\overline{p}_{i-1}^{(s)}+2^{c-1}\overline{p}%
_{i}^{(s)}+\cdots +2\overline{p}_{i+c-2}^{(s)}+2\overline{%
p}_{i+c}^{(s)}. \nonumber
\end{eqnarray}
On the other hand, by (\ref{4.2.3}) of Lemma \ref{Lem 4.1.1} and by our assumption $m_{i+c}(\nu^{(s)}(j))=m_{i_{s+1}}(\nu^{(s)}_j)=0,$ we have
\begin{equation}\label{4.9.4}
2\overline{p}_{i+c}^{(s)}\geq \overline{p}_{i+c-1}^{(s)}+\overline{p}_{i+c+1}^{(s)}+m_{i+c}(\nu^{(s-1)}(j))+m_{i+c}(\nu^{(s+1)}(j)).
\end{equation}
Put $F_i=\overline{p}_{i}^{(s)}+i.$ Since $m_{i_s+1}(\nu^{(s+1)}(j))>0,$ by substituting (\ref{4.9.4}) into (\ref{4.9.3}), we have
\begin{equation*}
2^{c+1}F_i\geq 2^cF_{i-1}+\cdots+2F_{i+c-2}+F_{i+c-1}+F_{i+c+1}+m_{i+c}(\nu^{(s+1)}(j))>0.
\end{equation*}
This shows that $\overline{p}_{i}^{(s)}+i>0.$

\par\smallskip\noindent\textbf{Case 4.} $i_{s+1}=\infty.$

In this case, one can find some $u>i$ such that $\overline{p}_{u}^{(s)}+u>0$ and $m_{k}(\nu^{s}(j))=0$ for $i_{s}\leq k< u$. Then by applying the method used in the proof of Lemma \ref{lem 4.4.1}, we obtain $\overline{p}_{i}^{(s)}+i>0$.
\end{proof}

Let $\Bla=(\lambda',\lambda'')\in\mathscr{P}_{N,2}$ as in 4.7.  For $(\nu,J)\in\mathrm{RC}(\mu,\Bla),$ let $(\nu,J_{+})$ be the $\mu$-rigged configuration obtained by replacing the strings $(i,x)$ of $(\nu^{(s)},J^{(s)})$ by $(i,i+x).$

\begin{prop}\label{prop 4.5.4}
$(\nu,J_{+})\in \mathrm{QM}(\mu,\boldsymbol{\lambda})$ for $(\nu,J)\in \mathrm{RC}(\mu,\boldsymbol{\lambda}).$ In particular, the map $(\nu,J) \mapsto (\nu,J_{+})$ gives a bijection $\mathrm{RC}(\mu,\boldsymbol{\lambda})\isom\mathrm{QM}(\mu,\boldsymbol{\lambda}).$
\end{prop}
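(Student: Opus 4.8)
The plan is to prove the two assertions separately: first that the assignment $(\nu,J)\mapsto(\nu,J_+)$ genuinely maps $\mathrm{RC}(\mu,\Bla)$ into $\mathrm{QM}(\mu,\Bla)$, and then that it is bijective by a counting argument. Throughout I would exploit the fact that passing from $(\nu,J)$ to $(\nu,J_+)$ modifies only the riggings on the $s$-th component and leaves the underlying configuration $\nu$ unchanged. Since by (\ref{4.3.1}) the weight of a rigged configuration depends only on $\nu$, we immediately get $\mathrm{wt}(\nu,J_+)=\mathrm{wt}(\nu,J)$. Because $\mathrm{RC}(\mu,\Bla)=\psi(\{\BBw_T\mid T\in\mathrm{Tab}(\Bla,\mu)\})$, with $\psi$ weight-preserving and $\mathrm{wt}(\BBw_T)=\Bla$, we have $\mathrm{wt}(\nu,J)=\Bla$; hence condition (1) in the definition of $\mathrm{QM}(\mu,\Bla)$ holds for $(\nu,J_+)$.

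For condition (2) I would split into the cases $a\neq s$ and $a=s$. For every $a\in I_0$, membership $(\nu,J)\in\mathrm{RC}(\mu)$ forces validity, i.e.\ $x\le p_i^{(a)}$ for every string $(i,x)$ of $(\nu^{(a)},J^{(a)})$; this yields the upper bounds in both cases, since for $a=s$ the shifted label satisfies $i+x\le p_i^{(s)}+i$. For $a\neq s$ the riggings are untouched, so it remains to check $x\ge 0$. Here I use that $(\nu,J)$ corresponds under the isomorphism (\ref{F 4.15}) to an element of $P(B(\mu),\Bla)$, so that $e_a(\nu,J)=0$ for every $a\in I_0-\{s\}$; by the criterion recalled in 4.5, $e_a(\nu,J)=0$ is equivalent to $x\ge 0$ for all strings of $(\nu^{(a)},J^{(a)})$. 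This settles $0\le x\le p_i^{(a)}$ for $a\neq s$.

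The remaining and genuinely substantial point is the lower bound on the $s$-th component, namely $0\le i+x$ for every string $(i,x)$ of $(\nu^{(s)},J^{(s)})$, which is precisely the content of the condition $P(\BBw,s,N)$ in (\ref{4.9.1}). Writing $(\nu,J)=\psi(\BBw)$ with $\BBw=\BBw_T$, I would run the induction furnished by Lemma \ref{Lem 4.5.3}: the base case $P(\BBw,s,0)$ is vacuous since $(\nu,J)_0=\emptyset$ carries no strings, and Lemma \ref{Lem 4.5.3} propagates $P(\BBw,s,j)\Rightarrow P(\BBw,s,j+1)$ up to $j=N$. Thus $P(\BBw,s,N)$ holds, giving $0\le i+x\le p_i^{(s)}+i$ for all strings of $(\nu^{(s)},J^{(s)})$, which is exactly condition (2) for the $s$-th component of $(\nu,J_+)$. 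Together with the previous paragraph this shows $(\nu,J_+)\in\mathrm{QM}(\mu,\Bla)$.

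For the bijectivity, the map is manifestly injective: it fixes $\nu$ and all riggings off the $s$-th component and shifts each $s$-th string by $(i,x)\mapsto(i,i+x)$, an operation inverted by $(i,y)\mapsto(i,y-i)$. To conclude I would compare cardinalities: by (\ref{F 4.15}) and Proposition \ref{cor 3.4.2}, $|\mathrm{RC}(\mu,\Bla)|=|P(B(\mu),\Bla)|=|\mathrm{Tab}(\Bla,\mu)|$, while Lemma \ref{lem 4.5.2} gives $|\mathrm{QM}(\mu,\Bla)|=|\mathrm{Tab}(\Bla,\mu)|$. Since an injection between finite sets of equal cardinality is automatically surjective, the map is a bijection. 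I expect the main obstacle to be the lower bound on the $s$-th component, but that difficulty has already been isolated into Lemma \ref{Lem 4.5.3}; the present proposition is essentially the assembly of that lemma with the validity/highest-weight dichotomy of 4.5 and the dimension count.
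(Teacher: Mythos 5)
Your proof is correct and follows essentially the same route as the paper's: reduce everything to the lower bound $0\le x+i$ on the $s$-th component, establish it by the induction of Lemma \ref{Lem 4.5.3} starting from the (vacuous) base case $P(\BBw,s,0)$, and conclude bijectivity from injectivity plus the cardinality count via Lemma \ref{lem 4.5.2} and (\ref{F 4.15}). The only differences are cosmetic: you spell out the parts the paper leaves implicit (the weight condition, validity for the upper bounds, and the criterion $e_a(\nu,J)=0\Leftrightarrow x\ge 0$ from 4.5 for $a\neq s$), and you justify the base case by emptiness of $(\nu,J)_0$ where the paper instead inspects the two possible forms of $\BBw_1$.
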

\begin{proof}
Assume that $(\nu,J)=\psi(\BBw_T),$ with $\BBw_T=(w_n,\ldots,w_1)$ for $T\in\mathrm{Tab}(\Bla,\mu).$ It is enough to show $0\leq x+i\leq p^{(a)}_i+i$ for all the strings $(i,x)$ of $(\nu^{(s)},J^{(s)}).$

Recall the construction of $(\nu,J)$ given in Remark 4.6.2. Note that $\BBw_1=(1,-,\ldots,-)$ or $(\overbrace{-,\ldots,-}^{s},
1,-\ldots,-).$ It is easy to check that $P(\BBw_T,s,0)$ in (\ref{4.9.1}) holds in each case. Hence, $P(\BBw_T,s,N)$ holds by Lemma \ref{Lem 4.5.3}. Namely, $0\leq x+i\leq p^{(a)}_i+i$ for all the strings $(i,x)$ of $(\nu,J)^{(s)}.$
Notice that the map $(\nu,J)\mapsto (\nu,J_+)$ is injective, hence it gives a bijection since both sides have the same cardinality by Lemma \ref{lem 4.5.2} and (\ref{F 4.15}).
\end{proof}

We have the following corollary immediately.
\begin{cor}
 $\mathrm{RC}(\mu,\boldsymbol{\lambda})$ consists  of the $L(\mu)$-rigged configurations $(\nu,J)$ satisfying the condition
\begin{enumerate}
\item[(1)]
 $\mathrm{wt}(\nu,J)=\boldsymbol{\lambda};$

\item[(2)]
 $0\leq x+\delta_{a,s}i\leq p^{(a)}_{i}+\delta_{a,s}i$ for any $a\in I_0$ and any string $(i,x)$ of $(\nu^{(a)},J^{(a)}).$
\end{enumerate}
\end{cor}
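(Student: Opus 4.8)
The plan is to read the assertion off directly from Proposition \ref{prop 4.5.4} together with the definition of $\mathrm{QM}(\mu,\boldsymbol{\lambda})$ given in 4.8, by tracking carefully how the rigging-shift map transforms the defining inequalities.

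First I would isolate the shift map abstractly. For an arbitrary $L(\mu)$-rigged configuration $(\nu,J)$, let $\Theta(\nu,J)=(\nu,J_{+})$ be the configuration obtained by leaving $(\nu^{(a)},J^{(a)})$ unchanged for $a\neq s$ and replacing each string $(i,x)$ of $(\nu^{(s)},J^{(s)})$ by $(i,x+i)$. Since $\Theta$ preserves the underlying configuration $\nu$ and only translates the labels on the $s$-th component, it is a bijection on the set of all $L(\mu)$-rigged configurations, with inverse given by $(i,y)\mapsto(i,y-i)$ on the $s$-th component. This $\Theta$ is exactly the map $(\nu,J)\mapsto(\nu,J_{+})$ occurring in Proposition \ref{prop 4.5.4}.

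Next I would record the key invariance: both the weight $\mathrm{wt}(\nu,J)$ and all vacancy numbers $p_{i}^{(a)}$ depend only on the configuration $\nu$, not on the rigging $J$, as is immediate from (\ref{F 4.1.1}) and (\ref{F 4.1.2}). Hence $\mathrm{wt}(\nu,J_{+})=\mathrm{wt}(\nu,J)$ and the vacancy numbers attached to $(\nu,J_{+})$ coincide with those of $(\nu,J)$. Using this, I would translate the two conditions of the corollary into the two conditions defining $\mathrm{QM}(\mu,\boldsymbol{\lambda})$ applied to $(\nu,J_{+})$. Condition (1) is literally unchanged, since $\mathrm{wt}(\nu,J_{+})=\mathrm{wt}(\nu,J)=\boldsymbol{\lambda}$. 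For condition (2), a string $(i,x)$ of $(\nu^{(a)},J^{(a)})$ corresponds under $\Theta$ to the string $(i,x+\delta_{a,s}i)$ of $(\nu^{(a)},J_{+}^{(a)})$; writing $y=x+\delta_{a,s}i$ for its label, the corollary's symmetric bound $0\leq x+\delta_{a,s}i\leq p_{i}^{(a)}+\delta_{a,s}i$ is precisely the $\mathrm{QM}$-bound $0\leq y\leq p_{i}^{(a)}+\delta_{a,s}i$. Therefore $(\nu,J)$ satisfies conditions (1) and (2) of the corollary if and only if $\Theta(\nu,J)\in\mathrm{QM}(\mu,\boldsymbol{\lambda})$.

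Finally I would assemble the pieces. Let $\mathcal{R}$ denote the set of $L(\mu)$-rigged configurations satisfying (1) and (2) of the corollary. The translation above yields $\Theta(\mathcal{R})=\mathrm{QM}(\mu,\boldsymbol{\lambda})$, whereas Proposition \ref{prop 4.5.4} gives $\Theta(\mathrm{RC}(\mu,\boldsymbol{\lambda}))=\mathrm{QM}(\mu,\boldsymbol{\lambda})$. Since $\Theta$ is injective on all rigged configurations, it follows that $\mathcal{R}=\mathrm{RC}(\mu,\boldsymbol{\lambda})$, which is the claim. There is no genuine obstacle in this last step: all the substantive work has already been absorbed into Proposition \ref{prop 4.5.4} (through Lemmas \ref{Lem 4.5.3} and \ref{lem 4.5.2}). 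The only point demanding care is the bookkeeping of the middle paragraph, namely verifying that the shift by $i$ on the $s$-th component converts the symmetric bounds of the corollary into those defining $\mathrm{QM}(\mu,\boldsymbol{\lambda})$, which relies on the fact that $\mathrm{wt}$ and $p_{i}^{(a)}$ are functions of the configuration $\nu$ alone.
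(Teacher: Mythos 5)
Your proof is correct and matches the paper's approach: the paper deduces this corollary ``immediately'' from Proposition 4.9.2 together with the definition of $\mathrm{QM}(\mu,\boldsymbol{\lambda})$ in 4.8, which is exactly your argument. Your middle paragraph merely spells out the bookkeeping the paper leaves implicit, namely that $\mathrm{wt}$ and the vacancy numbers $p_i^{(a)}$ depend only on the configuration $\nu$, so the shift $(i,x)\mapsto(i,x+i)$ on the $s$-th component translates the symmetric bounds of the corollary into the defining inequalities of $\mathrm{QM}(\mu,\boldsymbol{\lambda})$, after which injectivity of the shift finishes the identification.
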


\subsection{}A $L(\mu)$-configuration $\nu$ is said to be admissible if $p_i^{(a)}\geq 0$ for all $(i,a)\in\mathscr{H}.$ For $\lambda\in\mathscr{P}_{N}$ with $l(\lambda)\leq n,$ let $\mathrm{C}(\mu,\lambda)$ be the set of admissible $L(\mu)$-configurations of weight $\lambda.$ Consider the $t$-binomial coefficient
$$\left[ { p+m \atop m} \right]
_t=\frac{(t)_{p+m}}{(t)_p(t)_m},$$ where $(t)_m=(1-t)(1-t^2)\cdots(1-t^m).$ For $\lambda\in\mathscr{P}_{N}$ with $l(\lambda)\leq n,$
the fermionic formula with respect to $\lambda$ is given by
\begin{equation}
M(\mu,\lambda;t) =\sum_{\{m\}}t^{\mathrm{cc}(\{m\})}\prod_{(a,i)\in\mathscr{H}}\left[ { p^{(a)}_i+m_i^{(a)} \atop m_i^{(a)} } \right]_t,\label{4.10.2}
\end{equation}
where $\mathrm{cc}(m)$ and $p_i^{(a)}$ are given by
\begin{align}
&\mathrm{cc}(\{m\}) =\frac{1}{2}\sum\limits_{a,b\in I_0,i,j\geq 1}(\alpha_a,\alpha_b)\min(i,j)m_{i}^{(a)}m_{j}^{(b)},\nonumber\\
&p_{i}^{(a)}=\delta_{a,1}\sum_{j=1}^r\min(i,\mu_j)-\sum_{(b,j)\in\mathscr{H}}(\alpha_a,\alpha_b)\min(i,j)m_j^{(b)}.\nonumber
\end{align}Here the sum $\sum_{\{m\}}$ is taken over $\{m^{(b)}_j\in\mathbb{Z}_{\geq 0}\mid (b,j)\in\mathscr{H}\}$ such that $p^{(a)}_i\geq 0$ for any $(a,i)\in\mathscr{H}$ and $N\overline{\Lambda}_1-\sum_{(a,i)\in\mathscr{H}}im_i^{(a)}\alpha_a=\lambda.$ Note that such $\{m\}$ can be naturally identified with an admissible  $L(\mu)$-configuration of weight $\lambda,$ and vice versa. For a $L(\mu)$-configuration, define the cocharge $\mathrm{cc}(\nu)$ of $\nu$ as
\begin{equation}
\mathrm{cc}(\nu)=\frac{1}{2}\sum\limits_{a,b\in I_0,i,j\geq 1}(\alpha_a,\alpha_b)\min(i,j)m_{i}^{(a)}m_{j}^{(b)}.
\end{equation} Then we can rewrite (\ref{4.10.2}) as
\begin{equation}\label{nf4.10.2}
M(\mu,\lambda;t)=\sum_{\nu\in\mathrm{C}(\mu,\lambda)}t^{\mathrm{cc}(\nu)}\prod_{(a,i)\in\mathscr{H}}\left[ { p^{(a)}_i+m_i^{(a)} \atop m_i^{(a)} } \right]_{t}.
\end{equation}

In [KR1], Kirillov and Reshetikhin gave an expression of the Kostka polynomials in terms of fermionic formula.
\begin{thm}[{[KR1]}]
Let $\lambda,\mu\in\mathscr{P}_{N },$ with $l(\lambda)\leq n$. Then
\begin{equation}\label{F 4.11}
M(\mu,\lambda;t)=t^{n(\mu)}K_{\lambda,\mu}(t^{-1}),
\end{equation}
where $n(\mu)=\sum_{j=1}^k(j-1)\mu_j$ for $\mu=(\mu_1,\ldots,\mu_k).$
\end{thm}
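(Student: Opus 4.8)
The plan is to route the identity through the combinatorial model of rigged configurations, which is the common home of the charge statistic on tableaux and of the fermionic exponent, and then to match these two statistics via the Kirillov--Reshetikhin bijection.

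\textbf{Step 1.} First I would convert the fermionic formula (\ref{nf4.10.2}) into a generating function over rigged configurations. The key observation is that the Gaussian binomial $\left[ {p_i^{(a)}+m_i^{(a)} \atop m_i^{(a)}} \right]_t$ is the generating function $\sum_\kappa t^{|\kappa|}$ over all partitions $\kappa$ fitting inside the $m_i^{(a)}\times p_i^{(a)}$ rectangle, and such a $\kappa$ is exactly the datum of a rigging of the $m_i^{(a)}$ strings of length $i$ in $\nu^{(a)}$ by labels $x$ with $0\le x\le p_i^{(a)}$. Hence, writing $\mathrm{cc}(\nu,J)=\mathrm{cc}(\nu)+|J|$ with $|J|$ the sum of all labels, expanding each binomial turns the sum over admissible configurations $\nu\in\mathrm{C}(\mu,\lambda)$ into a sum over valid highest-weight $L(\mu)$-rigged configurations, giving
$$M(\mu,\lambda;t)=\sum_{(\nu,J)\in\mathrm{RC}(\mu,\lambda)}t^{\mathrm{cc}(\nu,J)},$$
which is precisely (\ref{0.2.3}).

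\textbf{Step 2.} Next I would use the bijection $\Pi_\lambda\colon\mathrm{Tab}(\lambda,\mu)\isom\mathrm{RC}(\mu,\lambda)$ recalled in Remark 4.6.4 and establish the statistic identity $\mathrm{cc}(\Pi_\lambda(T))=n(\mu)-\mathrm{c}(T)$ for every $T$. Granting this, Theorem \ref{nthm 2.3.1} yields
$$M(\mu,\lambda;t)=\sum_{T\in\mathrm{Tab}(\lambda,\mu)}t^{n(\mu)-\mathrm{c}(T)}=t^{n(\mu)}\sum_{T}t^{-\mathrm{c}(T)}=t^{n(\mu)}K_{\lambda,\mu}(t^{-1}),$$
which is the assertion. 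Equivalently, one may transport the statement through the crystal isomorphism $\Phi$ of the diagram (\ref{0.2.6}): since $\mathrm{c}(T)=\mathrm{E}(b_T)$ by Theorem \ref{thm 3.5.1}(1) and $\Phi$ restricts to a bijection $P(B(\mu),\lambda)\isom\mathrm{RC}(\mu,\lambda)$, the required identity becomes the energy--cocharge relation $\mathrm{cc}(\Phi(b))=n(\mu)-\mathrm{E}(b)$ for $b\in P(B(\mu),\lambda)$.

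\textbf{The main obstacle} is the statistic identity of Step 2; everything else is a formal manipulation. I would prove it by induction along the recursive construction of $\Pi_\lambda$ (equivalently of $\psi$) described in Remark 4.6.2, in which deleting the rightmost box of the lowest row carrying the largest letter of $T$ removes one box from $\mu$ and lengthens a chain of singular strings of $(\nu,J)$ by one. The content is to show that the increment of $\mathrm{cc}(\nu,J)=\mathrm{cc}(\nu)+|J|$ produced by this elementary move matches exactly the increment of $n(\mu)-\mathrm{c}(T)$ under the corresponding move on tableaux. Tracking how $\mathrm{cc}(\nu)=\tfrac12\sum_{a,b,i,j}(\alpha_a,\alpha_b)\min(i,j)m_i^{(a)}m_j^{(b)}$ and $|J|$ change as strings are grown and reset to singular is the delicate bookkeeping at the heart of the Kirillov--Reshetikhin argument, and the vacancy-number convexity of Lemma \ref{Lem 4.1.1} together with Lemma \ref{Lem 4.1.2} are exactly the estimates that keep the intermediate rigged configurations valid throughout the induction.
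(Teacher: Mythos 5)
The paper does not actually prove this statement: it is quoted from [KR1] as a known result, so there is no internal proof to compare against, and your proposal must stand on its own. Step 1 of your plan is sound and is the same manipulation the paper itself performs elsewhere (and repeats in the proof of Lemma \ref{lem 4.11.1}): expanding each $\left[ { p^{(a)}_i+m_i^{(a)} \atop m_i^{(a)} } \right]_t$ as the generating function of partitions in an $m_i^{(a)}\times p_i^{(a)}$ box, and identifying such a partition with the multiset of labels $0\le x\le p_i^{(a)}$ on the strings of length $i$, correctly converts (\ref{nf4.10.2}) into $\sum_{(\nu,J)\in\mathrm{RC}(\mu,\lambda)}t^{\mathrm{cc}(\nu,J)}$. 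Step 2's reduction is also the right one: granting the bijection $\Pi_\lambda$, the theorem is equivalent to the statistic identity $\mathrm{cc}(\Pi_\lambda(T))=n(\mu)-\mathrm{c}(T)$, and combining it with Theorem \ref{nthm 2.3.1} gives the claimed formula.

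The genuine gap is that this statistic identity, which is the entire content of the theorem, is never proved. You correctly name the inductive framework (follow the recursive construction of Remark 4.6.2, and compare the increment of $\mathrm{cc}(\nu)+|J|$ when a chain of singular strings is lengthened with the increment of $n(\mu)-\mathrm{c}(T)$ under the corresponding move on tableaux), but the actual comparison of increments is precisely the ``delicate bookkeeping at the heart of the Kirillov--Reshetikhin argument'' that you defer; as written, the proposal restates what must be shown rather than showing it. Concretely, one must compute how $\mathrm{cc}(\nu)=\tfrac12\sum_{a,b,i,j}(\alpha_a,\alpha_b)\min(i,j)m_i^{(a)}m_j^{(b)}$ changes when one box is added to each of $\nu^{(1)},\ldots,\nu^{(r)}$ at the selected lengths, how resetting the grown strings to singular values changes $|J|$ against the change of the vacancy numbers, and how the charge of $T$ changes when the box carrying the largest letter is removed; none of this is carried out. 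The closing appeal to Lemma \ref{Lem 4.1.1} and Lemma \ref{Lem 4.1.2} is also misplaced: in the paper those convexity estimates serve to characterize $\mathrm{RC}(\mu,\lambda)$ and the sets $\mathrm{QM}$ (Lemma \ref{lem 4.4.1}, Lemma \ref{lem 4.5.2}, Lemma \ref{Lem 4.5.3}), not to control cocharge increments along the bijection, so citing them cannot substitute for the missing computation.
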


Following [Sc1, (3.5)], we define a cocharge $\mathrm{cc}(\nu,J)$ for $(\nu,J)\in\mathrm{RC}(\mu)$ by
\begin{equation}
\mathrm{cc}(\nu,J)=\frac{1}{2}\sum\limits_{a,b\in I_0,i,j\geq 1}(\alpha_a,\alpha_b)\min(i,j)m_{i}^{(a)}m_{j}^{(b)}+|J|
\end{equation}
 where $|J|$ denote the sum of all labels of the strings of $(\nu,J)$. Then (\ref{nf4.10.2}) can be reformulated as the generating function over the highest weight rigged configurations of weight $\lambda$
\begin{equation}
M(\mu, \lambda;t)=\sum\limits_{(\nu ,J)\in \mathrm{RC}(\mu ,\lambda )}t^{%
\mathrm{cc}(\nu ,J)}.
\end{equation}
The following result was proved by Schilling in [Sc1, Theorem 3.9].
\begin{thm}[{[Sc1]}]\label{thm 4.3.3}
The cocharge function is constant on each $\mathrm{U}_{q}(\mathfrak{g}_{0})$-connected component of $\mathrm{RC}(\mu)$.
\end{thm}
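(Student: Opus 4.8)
The plan is to reduce the statement to the invariance of cocharge under a single Kashiwara operator. Since $\mathrm{RC}(\mu)$ is generated from the valid highest weight $L(\mu)$-rigged configurations by the operators $e_a,f_a$ ($a\in I_0$), and since every edge of the crystal graph of a connected component is of the form $b'=f_a(b)$, it is enough to show that $\mathrm{cc}(\nu,J)=\mathrm{cc}(f_a(\nu,J))$ whenever $f_a(\nu,J)\neq 0$; equivalently, by (\ref{F 4.8})--(\ref{F 4.9}) and the fact that $e_a,f_a$ are mutually inverse, that $\mathrm{cc}(\nu,J)=\mathrm{cc}(e_a(\nu,J))$ whenever $e_a(\nu,J)\neq 0$. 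Constancy of $\mathrm{cc}$ on each connected component then follows immediately, since any two elements of a component are joined by a chain of such edges.

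Fixing $a\in I_0$, I would use the explicit description of $f_a$ from [Sc1, 3.2]: it alters $(\nu,J)$ only in the $a$-th rigged partition, adding a single box to one string of $(\nu^{(a)},J^{(a)})$ (so that some part-length $\ell$ becomes $\ell+1$) and adjusting the labels accordingly, while leaving $\nu^{(b)}$ for $b\neq a$ untouched. The natural decomposition is $\mathrm{cc}(\nu,J)=\mathrm{cc}(\nu)+|J|$, where $\mathrm{cc}(\nu)=\tfrac12\sum_{a,b\in I_0,\,i,j\geq 1}(\alpha_a,\alpha_b)\min(i,j)m_i^{(a)}m_j^{(b)}$ is the configuration part and $|J|$ is the total of all labels. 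For the configuration part, adding a box to a length-$\ell$ string replaces $m_\ell^{(a)},m_{\ell+1}^{(a)}$ by $m_\ell^{(a)}-1,m_{\ell+1}^{(a)}+1$; because $\mathrm{cc}(\nu)$ is a fixed quadratic form in the multiplicities, the increment $\Delta\mathrm{cc}(\nu)$ is a linear-plus-constant expression whose linear part is governed precisely by the numbers of parts of $\nu^{(a-1)},\nu^{(a)},\nu^{(a+1)}$ of size $\ge \ell+1$, i.e.\ by the second difference $Q_i^{(a-1)}-2Q_i^{(a)}+Q_i^{(a+1)}$ entering the vacancy number $p_i^{(a)}$ in (\ref{F 4.1.2}).

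The second and main task is to match this against $\Delta|J|$. Here the point is that $f_a$ resets the label of the modified string relative to the vacancy number $p^{(a)}$, which itself shifts when the box is added; these shifts $\Delta p_i^{(a)}$ and $\Delta p_i^{(a\pm 1)}$ are controlled, via (\ref{F 4.1.2}), by the very same part-counts that appeared in $\Delta\mathrm{cc}(\nu)$. The heart of the argument is thus to verify directly from (\ref{F 4.1.2}) the identity $\Delta\mathrm{cc}(\nu)+\Delta|J|=0$. I expect the principal obstacle to lie exactly in this bookkeeping of the label change together with the degenerate cases—adding a length-$1$ string, or the selected string being the longest so that its companion has length zero—where one must invoke the convexity relations of Lemma \ref{Lem 4.1.1} to guarantee that the string chosen by $f_a$ and its new label are well defined and that the cancellation stays exact. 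Once single-step invariance is checked in all cases, the theorem follows.
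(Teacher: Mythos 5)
The paper contains no proof of this theorem to compare against: it is quoted from Schilling [Sc1, Theorem 3.9] and used as a black box. Your proposal therefore has to be measured against Schilling's own argument, and in outline it coincides with it: reduce to invariance of $\mathrm{cc}$ under a single Kashiwara operator (any two elements of a connected component are joined by $e_a/f_a$-edges), split $\mathrm{cc}(\nu,J)=\mathrm{cc}(\nu)+|J|$ into the quadratic configuration part and the sum of the labels, and verify that the two increments cancel for one application of $f_a$. That reduction and that decomposition are exactly right, and the cancellation does hold.

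There is, however, one genuine error in your description of $f_a$, and it sits precisely at the load-bearing point. Under Schilling's definition, $f_a$ replaces the selected string $(\ell,x)$ of $(\nu^{(a)},J^{(a)})$ by $(\ell+1,x-1)$ and then changes the labels of \emph{all other strings} --- including those of $J^{(b)}$ for $b$ adjacent to $a$ --- so that every colabel $p_i^{(b)}-x$ remains fixed; it is not true that the operator ``alters $(\nu,J)$ only in the $a$-th rigged partition.'' This matters because a direct computation from (\ref{F 4.1.2}) gives $\Delta\mathrm{cc}(\nu)=1+\sum_{b\in I_0}(\alpha_a,\alpha_b)\sum_{k\geq\ell+1}m_k^{(b)}$, while the colabel-preserving adjustment gives $\Delta|J|=-1-\sum_{b\in I_0}(\alpha_a,\alpha_b)\sum_{k\geq\ell+1}m_k^{(b)}$, so the two cancel exactly; if instead the labels outside $(\nu^{(a)},J^{(a)})$ were held fixed, one would get $\Delta|J|=-1$ and the cocharge would visibly fail to be invariant (already for $B^{1,1}\otimes B^{1,1}$ in type $A_2$: applying $f_2$ to the highest weight rigged configuration of weight $\overline{\Lambda}_2$ changes $\mathrm{cc}$ from $1$ to $0$ under that reading, but leaves it equal to $1$ under the colabel convention). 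Two smaller corrections: the modified string's new label is simply $x-1$, not a value ``reset relative to the vacancy number'' (it is the \emph{other} strings whose labels are reset against the shifted vacancy numbers); and the degenerate case is handled uniformly by treating it as $\ell=0$ (creation of a new string $(1,-1)$), so no appeal to the convexity relations of Lemma \ref{Lem 4.1.1} is needed anywhere in the cocharge bookkeeping --- convexity enters questions of validity of rigged configurations, not of invariance.
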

\subsection{} We now extend the fermionic formulas for Kostka polynomials to the case of double Kostka polynomials. First we define $M(\Bla,\mu;t)$ as an analogue of $M(\lambda,\mu;t)$ as follows. For $\Bla=(\lambda',\lambda'')\in\mathscr{P}_{N,2}$ as in 4.7,  set
\begin{equation}\label{4.11.1}
M(\mu,\Bla;t)=\sum_{(\nu,J)\in\mathrm{RC}(\mu,\Bla)}t^{\mathrm{cc}(\nu,J)}.
\end{equation}
We define $\mathrm{C}(\mu,\Bla)$ to be the set of $L(\mu)$-configurations $\nu$ satisfying the condition

\begin{enumerate}

\item[(1)]
 $\mathrm{wt}(\nu)=\Bla;$

\item[(2)]
 $0\leq p_i^{(a)}+\delta_{a,s}i$ for any $(a,i)\in\mathscr{H}$ such that $m_i^{(a)}\neq0.$

\end{enumerate}
By a similar argument as in the proof of Lemma \ref{lem 4.4.1}, it is easy to see that (2) can be replaced by
\begin{enumerate}
\item[(2')]
$0\leq p_i^{(a)}+\delta_{a,s}i$ for any $(a,i)\in\mathscr{H}.$
\end{enumerate}
By Proposition \ref{prop 4.5.4}, the map $\mathrm{RC}(\mu,\Bla)\rightarrow \mathrm{C}(\mu,\Bla),$ $(\nu,J)\mapsto \nu$ is surjective. For $\nu\in\mathrm{C}(\mu,\Bla),$ set $$\mathrm{cc}(\nu)=\frac{1}{2}\sum\limits_{a,b\in I_0,i,j\geq 1}(\alpha_a,\alpha_b)\min
(i,j)m_{i}^{(a)}m_{j}^{(b)}-|\nu^{(s)}|.$$
Then for $(\nu,J)\in\mathrm{RC}(\mu,\Bla),$ we have $$\mathrm{cc}(\nu,J)=\mathrm{cc}(\nu)+|J|+|\nu^{(s)}|.$$

\begin{lem}\label{lem 4.11.1}
For $\Bla=(\lambda',\lambda'')\in\mathscr{P}_{N,2}$ as in 4.7, we have
\begin{equation}\label{4.11.2}
M(\mu,\Bla;t)=\sum_{\nu\in\mathrm{C}(\mu,\Bla)}t^{\mathrm{cc}(\nu)}\prod_{(a,i)\in\mathscr{H}}\left[ { p^{(a)}_i+\delta_{a,s}+m_i^{(a)} \atop m_i^{(a)} } \right]_t.
\end{equation}
\end{lem}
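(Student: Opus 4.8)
The plan is to compute the generating function $M(\mu,\Bla;t)=\sum_{(\nu,J)\in\mathrm{RC}(\mu,\Bla)}t^{\mathrm{cc}(\nu,J)}$ by grouping the rigged configurations according to their underlying configuration $\nu$ and then summing over the admissible riggings. Since the forgetful map $(\nu,J)\mapsto\nu$ carries $\mathrm{RC}(\mu,\Bla)$ onto $\mathrm{C}(\mu,\Bla)$ (a consequence of Proposition \ref{prop 4.5.4} noted above), the outer index set is exactly $\mathrm{C}(\mu,\Bla)$, and I would first write
\[
M(\mu,\Bla;t)=\sum_{\nu\in\mathrm{C}(\mu,\Bla)}\;\sum_{J}t^{\mathrm{cc}(\nu,J)},
\]
the inner sum ranging over all riggings $J$ with $(\nu,J)\in\mathrm{RC}(\mu,\Bla)$.

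The next step is to split the cocharge into a configuration contribution and a rigging contribution. Applying the bijection $(\nu,J)\mapsto(\nu,J_+)$ of Proposition \ref{prop 4.5.4}, which replaces each string $(i,x)$ of $(\nu^{(s)},J^{(s)})$ by $(i,i+x)$ and fixes the other components, gives $|J_+|=|J|+|\nu^{(s)}|$; combined with the identity $\mathrm{cc}(\nu,J)=\mathrm{cc}(\nu)+|J|+|\nu^{(s)}|$ recorded above, this yields $\mathrm{cc}(\nu,J)=\mathrm{cc}(\nu)+|J_+|$. Pulling $t^{\mathrm{cc}(\nu)}$ out of the inner sum reduces the task to evaluating $\sum_{J_+}t^{|J_+|}$, where $(\nu,J_+)$ now ranges over the elements of $\mathrm{QM}(\mu,\Bla)$ having the given configuration $\nu$.

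For this inner sum I would exploit that the vacancy numbers $p_i^{(a)}$ depend only on $\nu$, not on the riggings. Hence the labels of the $m_i^{(a)}$ strings of length $i$ in $\nu^{(a)}$ may be chosen independently for each box $(a,i)\in\mathscr{H}$, subject only to the defining inequality $0\le\tilde x\le p_i^{(a)}+\delta_{a,s}i$ of $\mathrm{QM}(\mu,\Bla)$; because equal strings are identified under permutation, such a choice amounts to a multiset of $m_i^{(a)}$ labels drawn from $\{0,1,\dots,p_i^{(a)}+\delta_{a,s}i\}$. Thus $\sum_{J_+}t^{|J_+|}$ factorizes over the boxes, and the standard $q$-binomial identity
\[
\sum_{0\le x_1\le\cdots\le x_m\le P}t^{x_1+\cdots+x_m}=\left[ { P+m \atop m} \right]_t
\]
evaluates the factor at $(a,i)$, with $P=p_i^{(a)}+\delta_{a,s}i$ and $m=m_i^{(a)}$, to $\left[ { p_i^{(a)}+\delta_{a,s}i+m_i^{(a)} \atop m_i^{(a)} } \right]_t$. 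Reassembling the product and restoring $t^{\mathrm{cc}(\nu)}$ then gives the asserted formula.

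The $q$-binomial identity and the factorization over boxes are routine; the delicate point is the cocharge bookkeeping in the second step — one must check that the shift $J\mapsto J_+$ simultaneously absorbs the summand $|\nu^{(s)}|$ into $|J_+|$ and converts the $s$-shifted bounds $0\le x+\delta_{a,s}i\le p_i^{(a)}+\delta_{a,s}i$ cutting out $\mathrm{RC}(\mu,\Bla)$ into the ordinary nonnegative bounds $0\le\tilde x\le p_i^{(a)}+\delta_{a,s}i$ cutting out $\mathrm{QM}(\mu,\Bla)$. Once this is in place the evaluation is forced; the only ancillary verification is that the configurations $\nu$ admitting at least one valid rigging are precisely those in $\mathrm{C}(\mu,\Bla)$, which is exactly condition (2) (equivalently (2$'$)) in the definition of $\mathrm{C}(\mu,\Bla)$.
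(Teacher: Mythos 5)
Your proof is correct and follows essentially the same route as the paper: group the rigged configurations by their underlying configuration, use the bijection $(\nu,J)\mapsto(\nu,J_+)$ of Proposition \ref{prop 4.5.4} to absorb the $|\nu^{(s)}|$ shift into the riggings (so that $\mathrm{cc}(\nu,J)=\mathrm{cc}(\nu)+|J_+|$) and to convert the bounds into $0\leq x\leq p_i^{(a)}+\delta_{a,s}i$, then factorize over the boxes $(a,i)\in\mathscr{H}$ and apply the $q$-binomial generating-function identity for partitions in a box. One remark: your binomial carries $\delta_{a,s}i$, which agrees with the paper's own computation (riggings bounded by $p_i^{(a)}+\delta_{a,s}i$); the factor $\delta_{a,s}$ without the $i$ in the printed statement of the lemma is evidently a typo.
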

\begin{proof}
We have
\begin{eqnarray}\label{4.12.3}
M(\mu ,\boldsymbol{\lambda} ;t)&=&\sum\limits_{\nu\in\mathrm{C}(\mu,\boldsymbol{\lambda})}t^{\mathrm{cc}(\nu,J)+|\nu^{(s)}|}\sum\limits_{(\nu,J)\in\mathrm{RC}(\mu,\Bla)}t^{|J|}\\
                                &= &\sum\limits_{\nu\in\mathrm{C}(\mu,\boldsymbol{\lambda})}t^{\mathrm{cc}(\nu,J)+|\nu^{(s)}|}\prod\limits_{(a,i)\in\mathscr{H}}\sum\limits_{J^{(a,i)}}t^{|J^{(a,i)}|}\nonumber
\end{eqnarray}
where $J^{(a,i)}$ runs over all the riggings of the rows of length $i$ in $\nu^{(a)}.$ By using the bijection in Proposition \ref{prop 4.5.4}, we see that the sum $\sum t^{|J^{(a,i)}|}$ coincides with $t^{-|\nu^{(s)}|}\sum t^{|J_+^{(a,i)}|},$ where the set of such $J_+^{(a,i)}$ can be identified with the set of partitions with at most $m_i^{(a)}$ parts each not exceeding $p_i^{(a)}+\delta_{a,s}i$ and $|J_+^{(a,i)}|$ coincides with the size of the corresponding partition. Hence by a well-known formula for the generating function of such partitions, we have
\begin{equation}\label{4.12.4}
\sum_{J^{(a,i)}}t^{|J^{(a,i)}|}=t^{-|\nu^{(s)}|}\left[ { p^{(a)}_i+\delta_{a,s}+m_i^{(a)} \atop m_i^{(a)} } \right]_t.
\end{equation}
Substituting (\ref{4.12.4}) into (\ref{4.12.3}), we obtain (\ref{4.11.2}).
\end{proof}
Note that $\nu\in\mathrm{\mu,\Bla}$ is completely determined by the corresponding $m_i^{(a)},(a,i)\in\mathscr{H}.$ By (\ref{4.11.2}), we can redefine $M(\mu,\Bla;t)$ as follows.
\begin{Def} \emph{Let $\mu\in\mathscr{P}_{N}$ and $\Bla\in\mathscr{P}_{N,2}$ as in 4.7. The fermionic formula $M(\mu,\Bla;t)$ is defined as}
\begin{equation}
M(\mu,\Bla;t) =\sum_{\{m\}}t^{\mathrm{cc}(\{m\})}\prod_{(a,i)\in\mathscr{H}}\left[ { p^{(a)}_i+\delta_{a,s}+m_i^{(a)} \atop m_i^{(a)} } \right]_t,
\end{equation}
\emph{where $\mathrm{cc}(m)$ and $p_i^{(a)}$ are defined by}
\begin{align*}
&\mathrm{cc}(\{m\}) =\frac{1}{2}\sum\limits_{a,b\in I_0,i,j\geq 1}(\alpha_a,\alpha_b)\min(i,j)m_{i}^{(a)}m_{j}^{(b)}-\sum_{i\geq 1}im^{(s)}_i,\nonumber\\
&p_{i}^{(a)}=\delta_{a,1}\sum_{j=1}^r\min(i,\mu_j)-\sum_{(b,j)\in\mathscr{H}}(\alpha_a,\alpha_b)\min(i,j)m_j^{(b)}.\nonumber
\end{align*}
\emph{Here, the sum $\sum_{\{m\}}$ is taken over $\{m^{(b)}_j\in\mathbb{Z}_{\geq 0}\mid (b,j)\in\mathscr{H}\}$ such that $p^{(a)}_i+\delta_{a,s}i\geq 0$ for any $(a,i)\in\mathscr{H}$ and $N\overline{\Lambda}_1-\sum_{(a,i)\in\mathscr{H}}im_i^{(a)}\alpha_a=\Bla.$}
\end{Def}
The following theorem is our main result, which gives an analogue of (\ref{F 4.11}) for double Kostka polynomials.
\begin{thm}\label{thm 4.6.3} Let $\Bla=(\lambda',\lambda''),\Bmu=(-,\mu'')\in\mathscr{P}_{N,2}$ with $\lambda'=(\lambda'_1,\ldots,\lambda'_s),$ $\lambda''=(\lambda''_1,\ldots,\lambda''_t).$ Assume that $s+t\leq n.$ Then
\begin{equation}
M(\mu'',\Bla;t^2)=t^{2n(\mu'')+|\lambda'|}K_{\Bla,\Bmu}(t^{-1}).
\end{equation}
\end{thm}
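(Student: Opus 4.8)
\emph{Proof strategy.} The plan is to express both sides of the identity as generating functions over the single finite set $P(B(\mu''),\Bla)$ (equivalently over $\mathrm{RC}(\mu'',\Bla)$) and then to match the two statistics through the rigged configuration bijection $\Phi.$ For the right-hand side, I would start from the $1D$ sum formula of Theorem \ref{thm 3.9.3},
$$K_{\Bla,\Bmu}(t)=t^{|\lambda'|}\sum_{b\in P(B(\mu''),\Bla)}t^{2\mathrm{E}(b)},$$
replace $t$ by $t^{-1}$ and multiply by $t^{2n(\mu'')+|\lambda'|}$; the powers $t^{\pm|\lambda'|}$ cancel and one is left with
$$t^{2n(\mu'')+|\lambda'|}K_{\Bla,\Bmu}(t^{-1})=\sum_{b\in P(B(\mu''),\Bla)}t^{2(n(\mu'')-\mathrm{E}(b))}.$$
For the left-hand side, the definition (\ref{4.11.1}) gives $M(\mu'',\Bla;t^2)=\sum_{(\nu,J)\in\mathrm{RC}(\mu'',\Bla)}t^{2\mathrm{cc}(\nu,J)}.$ Since $\Phi$ restricts to a bijection $P(B(\mu''),\Bla)\isom\mathrm{RC}(\mu'',\Bla)$ by (\ref{F 4.15}), the theorem reduces to the single statistic identity
$$\mathrm{cc}(\Phi(b))=n(\mu'')-\mathrm{E}(b)\qquad(b\in P(B(\mu''),\Bla)).$$

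\emph{The key identity.} I would prove the stronger statement that $\mathrm{cc}(\Phi(b))=n(\mu'')-\mathrm{E}(b)$ for \emph{every} $b\in B(\mu''),$ and then specialize. The engine is reduction to maximal vectors: the energy function is constant on each $U_q(\mathfrak{g}_0)$-connected component of $B(\mu'')$ (Theorem \ref{thm 3.5.1}(2)), the cocharge is constant on each connected component of $\mathrm{RC}(\mu'')$ (Theorem \ref{thm 4.3.3}), and $\Phi$ is a crystal isomorphism (Theorem \ref{n thm4.3.2}), hence carries connected components to connected components and maximal vectors to maximal rigged configurations. It therefore suffices to verify the identity for a maximal vector $b_T$ of weight $\lambda\in\mathscr{P}_N,$ where $T\in\mathrm{Tab}(\lambda,\mu'').$ For such a vector $\mathrm{E}(b_T)=\mathrm{c}(T)$ by Theorem \ref{Thm 3.5.2}, while $\Phi(b_T)=\Pi_\lambda(T)$ is the Kirillov--Reshetikhin image, for which the charge--cocharge correspondence underlying (\ref{F 4.11}), namely $\mathrm{cc}(\Pi_\lambda(T))=n(\mu'')-\mathrm{c}(T),$ is available from [KR1]. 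Combining the two equalities yields $\mathrm{cc}(\Phi(b_T))=n(\mu'')-\mathrm{E}(b_T),$ and the component-constancy of both statistics propagates it to all of $B(\mu''),$ in particular to the non-maximal elements of $P(B(\mu''),\Bla).$

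\emph{Assembly and main difficulty.} Granting the key identity, I would reindex the right-hand sum along the bijection $b\mapsto\Phi(b)$: each term $t^{2(n(\mu'')-\mathrm{E}(b))}$ becomes $t^{2\mathrm{cc}(\Phi(b))},$ so
$$\sum_{b\in P(B(\mu''),\Bla)}t^{2(n(\mu'')-\mathrm{E}(b))}=\sum_{(\nu,J)\in\mathrm{RC}(\mu'',\Bla)}t^{2\mathrm{cc}(\nu,J)}=M(\mu'',\Bla;t^2),$$
which is exactly the assertion. I expect the main obstacle to lie entirely in the key identity of the second paragraph, and specifically in the passage from maximal vectors to general $b$: one must be sure that both statistics are genuinely component-constant and that $\Phi$ preserves components and maximal vectors, so that the classical charge--cocharge matching --- which a priori is only a term-by-term statement about maximal vectors --- can be transported to the non-maximal elements of $P(B(\mu''),\Bla).$ Once that is secured, the bookkeeping with the exponents $n(\mu'')$ and $|\lambda'|$ is routine.
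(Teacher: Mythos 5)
Your proof is correct, but it takes a genuinely different route from the paper's. The paper never touches the $1D$ sum in this proof: it transfers $M(\mu'',\Bla;t)=\sum_{(\nu,J)\in\mathrm{RC}(\mu'',\Bla)}t^{\mathrm{cc}(\nu,J)}$ to a \emph{cocharge} generating function over $\mathrm{Tab}(\Bla,\mu'')$ (the cocharge of a tableau being defined as that of its rigged configuration), then uses the bijection $\Gamma$ of Theorem \ref{thm 1.6.3} together with the invariance $\mathrm{cc}(T)=\mathrm{cc}(\mathrm{jdt}(T))$ (which follows from Proposition \ref{cor 3.3.5} and Theorem \ref{thm 4.3.3}) to decompose this sum as $\sum_{\eta}c^{\eta}_{\lambda',\lambda''}M(\eta,\mu'';t)$, and finally invokes the generating-function identity (\ref{F 4.11}) of [KR1] and Proposition \ref{nthm 2.1.1} of [LS]. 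You instead start from Theorem \ref{thm 3.9.3} and reduce everything to the single statistic identity $\mathrm{cc}(\Phi(b))=n(\mu'')-\mathrm{E}(b)$ on all of $B(\mu'')$, proved by component-constancy of both statistics and reduction to maximal vectors. Your route is shorter given that Theorem \ref{thm 3.9.3} is already available, and it produces a stronger intermediate fact of independent interest (energy plus cocharge is identically $n(\mu'')$ along $\Phi$), which is really the $X=M$ statement at the level of statistics. The price is that you need two inputs the paper never states: (i) the term-by-term form of the Kirillov--Reshetikhin theorem, $\mathrm{cc}(\Pi_\lambda(T))=n(\mu'')-\mathrm{c}(T)$, which is strictly stronger than the generating-function identity (\ref{F 4.11}) quoted from [KR1] (a generating-function identity alone does not yield term-by-term statistic preservation for a particular bijection); and (ii) the identification of $\Phi$ restricted to maximal elements with $\Pi_\lambda\circ\Psi_\lambda^{-1}$, which the paper asserts without proof in Remark 4.6.4. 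Both are legitimately available in the literature ([KR1], [KKR], [KSS]), so your argument stands, but these citations must be made explicit; the paper's arrangement deliberately avoids them by working only with generating functions and redoing the Littlewood--Richardson decomposition at the cocharge level. Note also that both proofs ultimately route through Proposition \ref{nthm 2.1.1} of [LS] --- the paper directly, and you implicitly, since Theorem \ref{thm 3.9.3} rests on Theorem \ref{nthm 2.3.2}, which rests on that proposition.
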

\begin{proof}
For a tableau $T$ of weight $\mu''$ with at most $n$ rows, set $\mathrm{cc}(T)=\mathrm{cc}(\Psi(\BBw_T)).$ Since $\Psi$ is an $U_{q}(\mathfrak{g}_0)$-crystal isomorphism by Proposition \ref{cor 3.3.5} and Theorem \ref{thm 4.3.3}, we have $\mathrm{cc}(T)=\mathrm{cc}(\mathrm{jdt}(T)).$ Consider the bijection (\ref{nF 2.2}) with $a\geq \lambda'_s$. Under the identification $\mathrm{Tab}(\xi_{\Bla,a},\mu'')$ with $\mathrm{Tab}(\Bla,\mu''),$ we have $\mathrm{cc}(T)=\mathrm{cc}(S)$ if $\Gamma_a(T)=(D,S)$ (note that $S=\mathrm{jdt}(T)$ by Theorem \ref{thm 1.6.3}). We have
\begin{eqnarray*}
M(\mu'' ,\boldsymbol{\lambda} ;t) &= &\sum\limits_{(\nu ,J)\in \mathrm{RC}(\mu'',\boldsymbol{\lambda})}t^{\mathrm{cc}(\nu ,J)}\\
                                &= &\sum\limits_{T\in \mathrm{SST}(\boldsymbol{\lambda},\mu'')}t^{\mathrm{cc}(T)}\\
                                &= &\sum\limits_{%
                                \eta\in\mathscr{P}_{N}}c^{\eta}_{\lambda',\lambda''}\left(\sum\limits_{S\in\mathrm{SST}(\eta,\mu'')}t^{\mathrm{cc}(S)}\right)\\
                                &= &\sum\limits_{%
                                \nu\in\mathscr{P}_{N}}c^{\eta}_{\lambda',\lambda''}\left(\sum\limits_{(\nu,J)\in\mathrm{RC}(\mu'',\eta)}t^{\mathrm{cc}(\nu,J)}\right)\\
                                &= &\sum\limits_{%
                                \nu\in\mathscr{P}_{N}}c^{\eta}_{\lambda',\lambda''}M(\eta ,\mu'' ;t)\\
                                &= & t^{n(\mu)}\sum\limits_{%
                                \eta\in\mathscr{P}_{N}}c^{\eta}_{\lambda',\lambda''}K_{\eta ,\mu''}(t^{-1}).
\end{eqnarray*}
The second equality follows from the fact that the bijection $\mathrm{RC}(\mu'',\Bla)\isom \mathrm{Tab}(\Bla,\mu'')$ preserves the cocharge, the third equality follows from Theorem \ref{thm 1.6.3} as in the proof of Proposition \ref{nthm 2.1.1}. The fourth equality follows as in the second, by applying the bijection $\mathrm{Tab}(\eta,\mu'')\isom\mathrm{RC}(\mu'',\eta).$ Then we obtain the last formula by (\ref{F 4.11}).
Now the theorem follows from Proposition \ref{nthm 2.1.1}.
\end{proof}

\begin{rem}
\emph{For $\mu\in\mathscr{P}_N$ and $\Bla\in\mathscr{P}_{N,2}$ as in 4.7,} \emph{we define a $1D$ sum}
\begin{equation}
X(\mu,\Bla;t)=\sum_{b\in P(B(\mu),\Bla)}t^{\mathrm{E}(b)}.
\end{equation}
\emph{Combining Theorem \ref{thm 4.6.3} with Theorem \ref{thm 3.9.3}, we obtain the following formula, which is an analogue of the $X=M$ conjecture. }
\begin{equation}
X(\mu,\Bla;t)=t^{n(\mu)}M(\mu,\Bla;t^{-1}).
\end{equation}
\end{rem}

\begin{center}
{\sc Appendix  \  Examples for Proposition 4.9.2 }
\end{center}

\par\medskip
We give some examples for Proposition 4.9.2. We will depict a rigged configuration as a sequence of Young diagrams with rows labeled on the left by the corresponding vacancy number and labeled on the right by the corresonding rigging. For example,

\begin{center}
\unitlength 10pt
\begin{picture}(20,5)

\Yboxdim{10pt}
\put(1,1){\yng(3,1)}
\put(0.2,1){1}
\put(0.2,2){1}
\put(2.2,1){0}
\put(4.2,2){0}
\put(7,1){\yng(3,1)}
\put(6.2,1){1}
\put(6.2,2){1}
\put(8.2,1){1}
\put(10.2,2){0}
\put(13,2){\yng(1)}
\put(12.2,2){0}
\put(14.2,2){0}
\end{picture}
\end{center}

(1) Let $\Bla=((4,2),(4,2,2,1))$ and $\mu=(4,4,3,2,2).$ Let
\par\noindent  $T=(\begin{array}{llll}
1 & 2 & 2 &3\\
3 & 5
\end{array},\begin{array}{llll}
1 & 1 & 1 & 2\\
2 & 4  \\
3 & 5  \\
4
\end{array})\in \mathrm{Tab}(\Bla,\mu).$ The corresponding rigged configuration $(\nu,J)$ is computed in the following table.

\medskip
\noindent
\begin{tabu}to \hsize {|X[0.2,1]|X[4,1]|X[7,1]|}
\hline
  $i$ & $\BBw_i$                     & $(\nu,J)_i$                          \\
  \hline
  0   & $(-,-,-,-,-,-)$              & $\emptyset $                         \\
  1   & $(-,-,-,1,-,-)$              & $\Yvcentermath1 \put(5,-3){\Yboxdim{8pt}\yng(1)}\put(-1,-2){\tiny{0}} \put(14,-2){\tiny{0}}   \put(62,-3){\Yboxdim{8pt}\yng(1)}\put(53,-2){\tiny{-1}}\put(72,-2){\tiny{-1}}\smallskip $               \\                                                                                                                                                 2   & $(-,-,-,11,-,-)$             & $\Yvcentermath1 \put(5,-3){\Yboxdim{8pt}\yng(2)}\put(-1,-2){\tiny{0}} \put(21,-2){\tiny{0}}   \put(62,-3){\Yboxdim{8pt}\yng(2)}\put(53,-2){\tiny{-2}}\put(79,-2){\tiny{-2}}\smallskip $                              \\
  3   & $(-,-,-,111,-,-)$            & $\Yvcentermath1 \put(5,-3){\Yboxdim{8pt}\yng(3)}\put(-1,-2){\tiny{0}} \put(29,-2){\tiny{0}}   \put(62,-3){\Yboxdim{8pt}\yng(3)}\put(53,-2){\tiny{-3}}\put(87,-2){\tiny{-3}}\smallskip $                                \\
  4   & $(-,-,-,111,-,1)$            & $\Yvcentermath1 \put(5,-3){\Yboxdim{8pt}\yng(3)}\put(-1,-2){\tiny{0}} \put(29,-2){\tiny{0}}   \put(62,-3){\Yboxdim{8pt}\yng(3)}\put(53,-2){\tiny{-3}}\put(87,-2){\tiny{-3}}\smallskip $                                \\
  5   & $(-,-,2,111,-,1)$            & $\Yvcentermath1 \put(5,-3){\Yboxdim{8pt}\yng(3,1)}\put(-1,6){\tiny{0}} \put(29,6){\tiny{0}} \put(-1,-2){\tiny{0}} \put(14,-2){\tiny{0}}   \put(62,-3){\Yboxdim{8pt}\yng(3,1)}\put(54,6){\tiny{-3}}\put(86,6){\tiny{-3}}   \put(53,-2){\tiny{-1}}\put(71,-2){\tiny{-1}} \put(116,1){\Yboxdim{8pt}\yng(1)} \put(111,1){\tiny{0}} \put(125,1){\tiny{0}}  \smallskip $                              \\
  6   & $(-,-,2,1112,-,1)$           & $\Yvcentermath1 \put(5,-3){\Yboxdim{8pt}\yng(4,1)}\put(-1,6){\tiny{1}} \put(37,6){\tiny{1}} \put(-1,-2){\tiny{0}} \put(14,-2){\tiny{0}}   \put(62,-3){\Yboxdim{8pt}\yng(4,1)}\put(54,6){\tiny{-4}}\put(94,6){\tiny{-4}}   \put(53,-2){\tiny{-1}}\put(71,-2){\tiny{-1}} \put(116,1){\Yboxdim{8pt}\yng(1)} \put(111,1){\tiny{0}} \put(125,1){\tiny{0}}  \smallskip $                                 \\
  7   & $(-,-,2,1112,-,12)$          & $\Yvcentermath1 \put(5,-3){\Yboxdim{8pt}\yng(4,1)}\put(-1,6){\tiny{2}} \put(37,6){\tiny{1}} \put(-1,-2){\tiny{0}} \put(14,-2){\tiny{0}}   \put(62,-3){\Yboxdim{8pt}\yng(4,1)}\put(54,6){\tiny{-4}}\put(94,6){\tiny{-4}}   \put(53,-2){\tiny{-1}}\put(71,-2){\tiny{-1}} \put(116,1){\Yboxdim{8pt}\yng(1)} \put(111,1){\tiny{0}} \put(125,1){\tiny{0}}  \smallskip$                              \\
  8   & $(-,-,2,1112,-,122)$         & $\Yvcentermath1 \put(5,-3){\Yboxdim{8pt}\yng(4,1)}\put(-1,6){\tiny{3}} \put(37,6){\tiny{1}} \put(-1,-2){\tiny{0}} \put(14,-2){\tiny{0}}   \put(62,-3){\Yboxdim{8pt}\yng(4,1)}\put(54,6){\tiny{-4}}\put(94,6){\tiny{-4}}   \put(53,-2){\tiny{-1}}\put(71,-2){\tiny{-1}} \put(116,1){\Yboxdim{8pt}\yng(1)} \put(111,1){\tiny{0}} \put(125,1){\tiny{0}}  \smallskip$                            \\
  9   & $(-,3,2,1112,-,122)$         & $\Yvcentermath1 \put(5,-3){\Yboxdim{8pt}\yng(4,1,1)}\put(-1,14){\tiny{3}} \put(37,14){\tiny{1}} \put(-1,-2){\tiny{0}} \put(14,-2){\tiny{0}} \put(-1,6){\tiny{0}} \put(14,6){\tiny{0}}   \put(62,-3){\Yboxdim{8pt}\yng(4,1,1)}\put(54,14){\tiny{-4}}\put(94,14){\tiny{-4}}   \put(53,-2){\tiny{-1}}\put(71,-2){\tiny{-1}} \put(53,6){\tiny{-1}}\put(71,6){\tiny{-1}} \put(116,1){\Yboxdim{8pt}\yng(1,1)} \put(111,1){\tiny{0}} \put(125,1){\tiny{0}}\put(111,9){\tiny{0}} \put(125,9){\tiny{0}}  \put(155,5){\Yboxdim{8pt}\yng(1)}  \put(150,5){\tiny{0}} \put(164,5){\tiny{0}}\smallskip$                        \\
  10  & $(-,3,2,1112,3,122)$         & $\Yvcentermath1 \put(5,-3){\Yboxdim{8pt}\yng(4,2,1)}\put(-1,14){\tiny{2}} \put(37,14){\tiny{1}} \put(-1,-2){\tiny{0}} \put(14,-2){\tiny{0}} \put(-1,6){\tiny{0}} \put(22,6){\tiny{0}}   \put(62,-3){\Yboxdim{8pt}\yng(4,1,1)}\put(54,14){\tiny{-3}}\put(94,14){\tiny{-4}}   \put(53,-2){\tiny{-1}}\put(71,-2){\tiny{-1}} \put(53,6){\tiny{-1}}\put(71,6){\tiny{-1}} \put(116,1){\Yboxdim{8pt}\yng(1,1)} \put(111,1){\tiny{0}} \put(125,1){\tiny{0}}\put(111,9){\tiny{0}} \put(125,9){\tiny{0}}  \put(155,5){\Yboxdim{8pt}\yng(1)}  \put(150,5){\tiny{0}} \put(164,5){\tiny{0}}\smallskip$                          \\
  11  & $(-,3,2,1112,3,1223)$        & $\Yvcentermath1 \put(5,-3){\Yboxdim{8pt}\yng(4,2,1)}\put(-1,14){\tiny{3}} \put(37,14){\tiny{1}} \put(-1,-2){\tiny{0}} \put(14,-2){\tiny{0}} \put(-1,6){\tiny{0}} \put(22,6){\tiny{0}}   \put(62,-3){\Yboxdim{8pt}\yng(4,1,1)}\put(54,14){\tiny{-3}}\put(94,14){\tiny{-4}}   \put(53,-2){\tiny{-1}}\put(71,-2){\tiny{-1}} \put(53,6){\tiny{-1}}\put(71,6){\tiny{-1}} \put(116,1){\Yboxdim{8pt}\yng(1,1)} \put(111,1){\tiny{0}} \put(125,1){\tiny{0}}\put(111,9){\tiny{0}} \put(125,9){\tiny{0}}  \put(155,5){\Yboxdim{8pt}\yng(1)}  \put(150,5){\tiny{0}} \put(164,5){\tiny{0}}\smallskip$                                \\
  12  & $(4,3,2,1112,3,1223)$        & $\Yvcentermath1 \put(5,-3){\Yboxdim{8pt}\yng(4,2,1,1)}\put(-1,22){\tiny{3}} \put(37,22){\tiny{1}} \put(-1,13){\tiny{0}} \put(22,13){\tiny{0}} \put(-1,6){\tiny{0}} \put(14,6){\tiny{0}} \put(-1,-2){\tiny{0}} \put(14,-2){\tiny{0}}   \put(62,-3){\Yboxdim{8pt}\yng(4,1,1,1)}\put(54,22){\tiny{-3}}\put(94,22){\tiny{-4}}   \put(53,14){\tiny{-1}}\put(71,14){\tiny{-1}} \put(53,6){\tiny{-1}}\put(71,6){\tiny{-1}} \put(53,-2){\tiny{-1}}\put(71,-2){\tiny{-1}} \put(116,1){\Yboxdim{8pt}\yng(1,1,1)} \put(111,17){\tiny{0}} \put(125,17){\tiny{0}}\put(111,9){\tiny{0}} \put(125,9){\tiny{0}} \put(111,1){\tiny{0}} \put(125,1){\tiny{0}} \put(155,5){\Yboxdim{8pt}\yng(1,1)}  \put(150,13){\tiny{0}} \put(164,13){\tiny{0}} \put(150,5){\tiny{0}} \put(164,5){\tiny{0}} \put(194,9){\Yboxdim{8pt}\yng(1)}  \put(189,9){\tiny{0}} \put(203,9){\tiny{0}}\smallskip$                              \\
  13  & $(4,3,24,1112,3,1223)$       & $\Yvcentermath1 \put(5,-3){\Yboxdim{8pt}\yng(4,2,2,1)}\put(-1,22){\tiny{3}} \put(37,22){\tiny{1}} \put(-1,13){\tiny{0}} \put(22,13){\tiny{0}} \put(-1,6){\tiny{0}} \put(22,6){\tiny{0}} \put(-1,-2){\tiny{0}} \put(14,-2){\tiny{0}}   \put(62,-3){\Yboxdim{8pt}\yng(4,2,1,1)}\put(54,22){\tiny{-3}}\put(94,22){\tiny{-4}}   \put(53,14){\tiny{-1}}\put(79,14){\tiny{-1}} \put(53,6){\tiny{-1}}\put(71,6){\tiny{-1}} \put(53,-2){\tiny{-1}}\put(71,-2){\tiny{-1}} \put(116,1){\Yboxdim{8pt}\yng(2,1,1)} \put(111,17){\tiny{0}} \put(133,17){\tiny{0}}\put(111,9){\tiny{0}} \put(125,9){\tiny{0}} \put(111,1){\tiny{0}} \put(125,1){\tiny{0}} \put(155,5){\Yboxdim{8pt}\yng(1,1)}  \put(150,13){\tiny{0}} \put(164,13){\tiny{0}} \put(150,5){\tiny{0}} \put(164,5){\tiny{0}} \put(194,9){\Yboxdim{8pt}\yng(1)}  \put(189,9){\tiny{0}} \put(203,9){\tiny{0}}\smallskip$                                 \\
  14  & $(4,35,24,1112,3,1223)$      & $\Yvcentermath1 \put(5,-3){\Yboxdim{8pt}\yng(4,2,2,2)}\put(-1,22){\tiny{3}} \put(37,22){\tiny{1}} \put(-1,13){\tiny{0}} \put(22,13){\tiny{0}} \put(-1,6){\tiny{0}} \put(22,6){\tiny{0}} \put(-1,-2){\tiny{0}} \put(22,-2){\tiny{0}}   \put(62,-3){\Yboxdim{8pt}\yng(4,2,2,1)}\put(54,22){\tiny{-3}}\put(94,22){\tiny{-4}}   \put(53,14){\tiny{-1}}\put(79,14){\tiny{-1}} \put(53,6){\tiny{-1}}\put(79,6){\tiny{-1}} \put(53,-2){\tiny{-1}}\put(71,-2){\tiny{-1}} \put(116,1){\Yboxdim{8pt}\yng(2,2,1)} \put(111,17){\tiny{0}} \put(133,17){\tiny{0}}\put(111,9){\tiny{0}} \put(133,9){\tiny{0}} \put(111,1){\tiny{0}} \put(125,1){\tiny{0}} \put(155,5){\Yboxdim{8pt}\yng(2,1)}  \put(150,13){\tiny{0}} \put(172,13){\tiny{0}} \put(150,5){\tiny{0}} \put(164,5){\tiny{0}} \put(194,9){\Yboxdim{8pt}\yng(1)}  \put(189,9){\tiny{0}} \put(203,9){\tiny{0}}\smallskip$                              \\
  15  & $(4,35,24,1112,35,1223)$     & $\Yvcentermath1 \put(5,-3){\Yboxdim{8pt}\yng(4,3,2,2)}\put(-1,22){\tiny{2}} \put(37,22){\tiny{1}} \put(-1,13){\tiny{1}} \put(30,13){\tiny{1}} \put(-1,6){\tiny{1}} \put(22,6){\tiny{0}} \put(-1,-2){\tiny{1}} \put(22,-2){\tiny{0}}   \put(62,-3){\Yboxdim{8pt}\yng(4,2,2,1)}\put(54,22){\tiny{-2}}\put(94,22){\tiny{-4}}   \put(53,14){\tiny{-1}}\put(79,14){\tiny{-1}} \put(53,6){\tiny{-1}}\put(79,6){\tiny{-1}} \put(53,-2){\tiny{-1}}\put(71,-2){\tiny{-1}} \put(116,1){\Yboxdim{8pt}\yng(2,2,1)} \put(111,17){\tiny{0}} \put(133,17){\tiny{0}}\put(111,9){\tiny{0}} \put(133,9){\tiny{0}} \put(111,1){\tiny{0}} \put(125,1){\tiny{0}} \put(155,5){\Yboxdim{8pt}\yng(2,1)}  \put(150,13){\tiny{0}} \put(172,13){\tiny{0}} \put(150,5){\tiny{0}} \put(164,5){\tiny{0}} \put(194,9){\Yboxdim{8pt}\yng(1)}  \put(189,9){\tiny{0}} \put(203,9){\tiny{0}}\smallskip$                                \\
   \hline
\end{tabu}

\medskip

\noindent We obtain $(\nu,J)=\Yvcentermath1 \put(5,-17){\Yboxdim{10pt}\yng(4,3,2,2)} \put(45,14){\small{1}} \put(35,4){\small{1}} \put(25,-6){\small{0}} \put(25,-16){\small{0}} \put(82,-17){\Yboxdim{10pt}\yng(4,2,2,1)}\put(122,14){\small{-4}} \put(102,4){\small{-1}} \put(102,-6){\small{-1}} \put(92,-16){\small{-1}} \put(156,-13){\Yboxdim{10pt}\yng(2,2,1)}\put(176,8){\small{0}} \put(176,-2){\small{0}} \put(166,-12){\small{0}}\put(215,-9){\Yboxdim{10pt}\yng(2,1)} \put(235,1){\small{0}} \put(225,-9){\small{0}} \put(274,-3){\Yboxdim{10pt}\yng(1)} \put(284,-3){\small{0}} $

\medskip

\noindent Hence, we have $(\nu,J_{+})=\Yvcentermath1 \put(5,-17){\Yboxdim{10pt}\yng(4,3,2,2)} \put(45,14){\small{1}} \put(35,4){\small{1}} \put(25,-6){\small{0}} \put(25,-16){\small{0}} \put(82,-17){\Yboxdim{10pt}\yng(4,2,2,1)}\put(122,14){\small{0}} \put(102,4){\small{1}} \put(102,-6){\small{1}} \put(92,-16){\small{0}} \put(156,-13){\Yboxdim{10pt}\yng(2,2,1)}\put(176,8){\small{0}} \put(176,-2){\small{0}} \put(166,-12){\small{0}}\put(215,-9){\Yboxdim{10pt}\yng(2,1)} \put(235,1){\small{0}} \put(225,-9){\small{0}} \put(274,-3){\Yboxdim{10pt}\yng(1)} \put(284,-3){\small{0}} $

\medskip

\noindent One can easily check that $(\nu,J_+)\in\mathrm{QM}(\mu,\boldsymbol{\lambda}).$

\bigskip

(2) Let $\Bla=((4,2),(3,3,2,1))$ and $\mu=(4,3,3,2,2,1).$ Let
\par\noindent $T=(\begin{array}{llll}
1 & 1 & 2 &4\\
2 & 3
\end{array},\begin{array}{llll}
1 & 1 & 2\\
3 & 5 & 7\\
4 & 6\\
5
\end{array})\in \mathrm{Tab}(\Bla,\mu).$ The corresponding rigged configuration $(\nu,J)$ is computed in the following table.

\medskip
\noindent
\begin{tabu}to \hsize {|X[0.2,1]|X[4,1]|X[7,1]|}
\hline
    $i$ & $\BBw_i$                     & $(\nu,J)_i$                          \\
\hline
0       &(-,-,-,-,-,-)                 & $\emptyset $  \\
1       &(-,-,-,1,-,-)                 & $\Yvcentermath1 \put(5,-3){\Yboxdim{8pt}\yng(1)}\put(-1,-2){\tiny{0}} \put(14,-2){\tiny{0}}   \put(62,-3){\Yboxdim{8pt}\yng(1)}\put(53,-2){\tiny{-1}}\put(72,-2){\tiny{-1}}\smallskip $   \\
2       &(-,-,-,11,-,-)                & $\Yvcentermath1 \put(5,-3){\Yboxdim{8pt}\yng(2)}\put(-1,-2){\tiny{0}} \put(21,-2){\tiny{0}}   \put(62,-3){\Yboxdim{8pt}\yng(2)}\put(53,-2){\tiny{-2}}\put(79,-2){\tiny{-2}}\smallskip $   \\
3       &(-,-,-,11,-,1)                & $\Yvcentermath1 \put(5,-3){\Yboxdim{8pt}\yng(2)}\put(-1,-2){\tiny{0}} \put(21,-2){\tiny{0}}   \put(62,-3){\Yboxdim{8pt}\yng(2)}\put(53,-2){\tiny{-2}}\put(79,-2){\tiny{-2}}\smallskip $  \\
4       &(-,-,-,11,-,11)               & $\Yvcentermath1 \put(5,-3){\Yboxdim{8pt}\yng(2)}\put(-1,-2){\tiny{0}} \put(21,-2){\tiny{0}}   \put(62,-3){\Yboxdim{8pt}\yng(2)}\put(53,-2){\tiny{-2}}\put(79,-2){\tiny{-2}}\smallskip $   \\
5       &(-,-,-,112,-,11)              & $\Yvcentermath1 \put(5,-3){\Yboxdim{8pt}\yng(3)}\put(-1,-2){\tiny{1}} \put(29,-2){\tiny{1}}   \put(62,-3){\Yboxdim{8pt}\yng(3)}\put(53,-2){\tiny{-3}}\put(87,-2){\tiny{-3}}\smallskip $   \\
6       &(-,-,-,112,2,11)              & $\Yvcentermath1 \put(5,-3){\Yboxdim{8pt}\yng(4)}\put(-1,-2){\tiny{1}} \put(37,-2){\tiny{1}}   \put(62,-3){\Yboxdim{8pt}\yng(3)}\put(53,-2){\tiny{-3}}\put(87,-2){\tiny{-3}}\smallskip $  \\
7       &(-,-,-,112,2,112)             & $\Yvcentermath1 \put(5,-3){\Yboxdim{8pt}\yng(4)}\put(-1,-2){\tiny{2}} \put(37,-2){\tiny{1}}   \put(62,-3){\Yboxdim{8pt}\yng(3)}\put(53,-2){\tiny{-3}}\put(87,-2){\tiny{-3}}\smallskip $  \\
8       &(-,-,3,112,2,112)             & $\Yvcentermath1 \put(5,-7){\Yboxdim{8pt}\yng(4,1)}\put(-1,2){\tiny{2}} \put(37,2){\tiny{1}} \put(-1,-6){\tiny{1}} \put(14,-6){\tiny{1}}  \put(62,-7){\Yboxdim{8pt}\yng(3,1)}\put(53,2){\tiny{-3}}\put(87,2){\tiny{-3}}\put(53,-6){\tiny{-1}}\put(71,-6){\tiny{-1}} \put(117,-3){\Yboxdim{8pt}\yng(1)}\put(112,-3){\tiny{0}} \put(126,-3){\tiny{0}} \smallskip $  \\
9       &(-,-,3,112,23,112)            & $\Yvcentermath1 \put(5,-7){\Yboxdim{8pt}\yng(4,2)}\put(-1,2){\tiny{1}} \put(37,2){\tiny{1}} \put(-1,-6){\tiny{1}} \put(22,-6){\tiny{1}}  \put(62,-7){\Yboxdim{8pt}\yng(3,1)}\put(53,2){\tiny{-2}}\put(87,2){\tiny{-3}}\put(53,-6){\tiny{-1}}\put(71,-6){\tiny{-1}} \put(117,-3){\Yboxdim{8pt}\yng(1)}\put(112,-3){\tiny{0}} \put(126,-3){\tiny{0}} \smallskip $  \\
10      &(-,4,3,112,23,112)            & $\Yvcentermath1 \put(5,-7){\Yboxdim{8pt}\yng(4,2,1)}\put(-1,10){\tiny{1}} \put(37,10){\tiny{1}} \put(-1,2){\tiny{1}} \put(22,2){\tiny{1}}  \put(-1,-6){\tiny{1}} \put(14,-6){\tiny{1}} \put(62,-7){\Yboxdim{8pt}\yng(3,1,1)}\put(53,10){\tiny{-2}}\put(87,10){\tiny{-3}}\put(53,2){\tiny{-1}}\put(71,2){\tiny{-1}} \put(53,-6){\tiny{-1}}\put(71,-6){\tiny{-1}} \put(117,-3){\Yboxdim{8pt}\yng(1,1)}\put(112,5){\tiny{0}} \put(126,5){\tiny{0}} \put(112,-3){\tiny{0}} \put(126,-3){\tiny{0}} \put(172,1){\Yboxdim{8pt}\yng(1)}\put(167,1){\tiny{0}} \put(181,1){\tiny{0}}\smallskip $  \\
11      &(-,4,3,112,23,1124)           & $\Yvcentermath1 \put(5,-7){\Yboxdim{8pt}\yng(4,2,1)}\put(-1,10){\tiny{2}} \put(37,10){\tiny{1}} \put(-1,2){\tiny{2}} \put(22,2){\tiny{1}}  \put(-1,-6){\tiny{1}} \put(14,-6){\tiny{1}} \put(62,-7){\Yboxdim{8pt}\yng(3,1,1)}\put(53,10){\tiny{-2}}\put(87,10){\tiny{-3}}\put(53,2){\tiny{-1}}\put(71,2){\tiny{-1}} \put(53,-6){\tiny{-1}}\put(71,-6){\tiny{-1}}
\put(117,-3){\Yboxdim{8pt}\yng(1,1)}\put(112,5){\tiny{0}} \put(126,5){\tiny{0}} \put(112,-3){\tiny{0}} \put(126,-3){\tiny{0}} \put(172,1){\Yboxdim{8pt}\yng(1)}\put(167,1){\tiny{0}} \put(181,1){\tiny{0}}\smallskip $  \\
12      &(5,4,3,112,23,1124)           & $\Yvcentermath1 \put(5,-7){\Yboxdim{8pt}\yng(4,2,1,1)}\put(-1,18){\tiny{2}} \put(37,18){\tiny{1}} \put(-1,10){\tiny{2}} \put(22,10){\tiny{1}}  \put(-1,2){\tiny{1}} \put(14,2){\tiny{1}} \put(-1,-6){\tiny{1}} \put(14,-6){\tiny{1}} \put(62,-7){\Yboxdim{8pt}\yng(3,1,1,1)}\put(53,18){\tiny{-2}}\put(87,18){\tiny{-3}}\put(53,10){\tiny{-1}}\put(71,10){\tiny{-1}} \put(53,2){\tiny{-1}}\put(71,2){\tiny{-1}}\put(53,-6){\tiny{-1}}\put(71,-6){\tiny{-1}}
\put(117,-3){\Yboxdim{8pt}\yng(1,1,1)}\put(112,13){\tiny{0}} \put(126,13){\tiny{0}} \put(112,5){\tiny{0}} \put(126,5){\tiny{0}} \put(112,-3){\tiny{0}} \put(126,-3){\tiny{0}}
\put(172,1){\Yboxdim{8pt}\yng(1,1)}\put(167,9){\tiny{0}} \put(181,9){\tiny{0}}\put(167,1){\tiny{0}} \put(181,1){\tiny{0}}
\put(217,5){\Yboxdim{8pt}\yng(1)}\put(212,5){\tiny{0}} \put(226,5){\tiny{0}}\smallskip $  \\
13      &(5,4,35,112,23,1124)          & $\Yvcentermath1 \put(5,-7){\Yboxdim{8pt}\yng(4,2,2,1)}\put(-1,18){\tiny{2}} \put(37,18){\tiny{1}} \put(-1,10){\tiny{2}} \put(22,10){\tiny{2}}
\put(-1,2){\tiny{2}} \put(22,2){\tiny{1}}
\put(-1,-6){\tiny{1}} \put(14,-6){\tiny{1}}
\put(62,-7){\Yboxdim{8pt}\yng(3,2,1,1)}\put(53,18){\tiny{-2}}\put(87,18){\tiny{-3}}
\put(53,10){\tiny{-1}}\put(79,10){\tiny{-1}}
\put(53,2){\tiny{-1}}\put(71,2){\tiny{-1}}
\put(53,-6){\tiny{-1}}\put(71,-6){\tiny{-1}}
\put(117,-3){\Yboxdim{8pt}\yng(2,1,1)}\put(112,13){\tiny{0}} \put(134,13){\tiny{0}}
\put(112,5){\tiny{0}} \put(126,5){\tiny{0}}
\put(112,-3){\tiny{0}} \put(126,-3){\tiny{0}}
\put(172,1){\Yboxdim{8pt}\yng(1,1)}\put(167,9){\tiny{0}} \put(181,9){\tiny{0}}
\put(167,1){\tiny{0}} \put(181,1){\tiny{0}}
\put(217,5){\Yboxdim{8pt}\yng(1)}\put(212,5){\tiny{0}} \put(226,5){\tiny{0}}\smallskip $  \\
14      &(5,46,35,112,23,1124)         &  $\Yvcentermath1 \put(5,-7){\Yboxdim{8pt}\yng(4,2,2,2)}\put(-1,18){\tiny{2}} \put(37,18){\tiny{1}} \put(-1,10){\tiny{2}} \put(22,10){\tiny{2}}
\put(-1,2){\tiny{2}} \put(22,2){\tiny{2}}
\put(-1,-6){\tiny{2}} \put(22,-6){\tiny{1}}
\put(62,-7){\Yboxdim{8pt}\yng(3,2,2,1)}\put(53,18){\tiny{-2}}\put(87,18){\tiny{-3}}
\put(53,10){\tiny{-1}}\put(79,10){\tiny{-1}}
\put(53,2){\tiny{-1}}\put(79,2){\tiny{-1}}
\put(53,-6){\tiny{-1}}\put(71,-6){\tiny{-1}}
\put(117,-3){\Yboxdim{8pt}\yng(2,2,1)}\put(112,13){\tiny{0}} \put(134,13){\tiny{0}}
\put(112,5){\tiny{0}} \put(134,5){\tiny{0}}
\put(112,-3){\tiny{0}} \put(126,-3){\tiny{0}}
\put(172,1){\Yboxdim{8pt}\yng(2,1)}\put(167,9){\tiny{0}} \put(189,9){\tiny{0}}
\put(167,1){\tiny{0}} \put(181,1){\tiny{0}}
\put(217,5){\Yboxdim{8pt}\yng(1)}\put(212,5){\tiny{0}} \put(226,5){\tiny{0}}\smallskip $ \\
15      &(5,46,357,112,23,1124)        & $\Yvcentermath1 \put(5,-7){\Yboxdim{8pt}\yng(4,3,2,2)}\put(-1,18){\tiny{2}} \put(37,18){\tiny{1}} \put(-1,10){\tiny{3}} \put(30,10){\tiny{3}}
\put(-1,2){\tiny{3}} \put(22,2){\tiny{2}}
\put(-1,-6){\tiny{3}} \put(22,-6){\tiny{1}}
\put(62,-7){\Yboxdim{8pt}\yng(3,3,2,1)}\put(53,18){\tiny{-2}}\put(87,18){\tiny{-2}}
\put(53,10){\tiny{-2}}\put(87,10){\tiny{-3}}
\put(53,2){\tiny{-1}}\put(79,2){\tiny{-1}}
\put(53,-6){\tiny{-1}}\put(71,-6){\tiny{-1}}
\put(117,-3){\Yboxdim{8pt}\yng(3,2,1)}\put(112,13){\tiny{0}} \put(142,13){\tiny{0}}
\put(112,5){\tiny{0}} \put(134,5){\tiny{0}}
\put(112,-3){\tiny{0}} \put(126,-3){\tiny{0}}
\put(172,1){\Yboxdim{8pt}\yng(2,1)}\put(167,9){\tiny{0}} \put(189,9){\tiny{0}}
\put(167,1){\tiny{0}} \put(181,1){\tiny{0}}
\put(217,5){\Yboxdim{8pt}\yng(1)}\put(212,5){\tiny{0}} \put(226,5){\tiny{0}}\smallskip $  \\
 \hline
\end{tabu}

\medskip

\noindent We obtain $(\nu,J)=\Yvcentermath1 \put(5,-17){\Yboxdim{10pt}\yng(4,3,2,2)} \put(45,14){\small{1}} \put(35,4){\small{3}} \put(25,-6){\small{2}} \put(25,-16){\small{1}}
\put(82,-17){\Yboxdim{10pt}\yng(3,3,2,1)}\put(112,14){\small{-2}} \put(112,4){\small{-3}} \put(102,-6){\small{-1}} \put(92,-16){\small{-1}} \put(156,-13){\Yboxdim{10pt}\yng(3,2,1)}\put(186,8){\small{0}} \put(176,-2){\small{0}} \put(166,-12){\small{0}}\put(215,-9){\Yboxdim{10pt}\yng(2,1)} \put(235,1){\small{0}} \put(225,-9){\small{0}} \put(274,-3){\Yboxdim{10pt}\yng(1)} \put(284,-3){\small{0}} $

\medskip

\noindent Hence, we have $(\nu,J_{+})=\Yvcentermath1 \put(5,-17){\Yboxdim{10pt}\yng(4,3,2,2)} \put(45,14){\small{1}} \put(35,4){\small{3}} \put(25,-6){\small{2}} \put(25,-16){\small{1}}
\put(82,-17){\Yboxdim{10pt}\yng(3,3,2,1)}\put(112,14){\small{1}} \put(112,4){\small{0}} \put(102,-6){\small{1}} \put(92,-16){\small{0}} \put(156,-13){\Yboxdim{10pt}\yng(3,2,1)}\put(186,8){\small{0}} \put(176,-2){\small{0}} \put(166,-12){\small{0}}\put(215,-9){\Yboxdim{10pt}\yng(2,1)} \put(235,1){\small{0}} \put(225,-9){\small{0}} \put(274,-3){\Yboxdim{10pt}\yng(1)} \put(284,-3){\small{0}} $

\medskip

\noindent One can easily check that $(\nu,J_+)\in\mathrm{QM}(\mu,\boldsymbol{\lambda}).$

\bigskip

(3) Let $\Bla=((2,1),(2))$ and $\mu=(2,2,1).$ We list all  the elements of $\mathrm{Tab}(\Bla,\mu)$ and the corresponding rigged configurations in the following table.

\medskip
\noindent
\begin{tabu}to \hsize {|X[4,1]|X[7,1]|}
\hline
    $T$                     &         $(\nu,J)$                          \\
\hline
    $(\begin{array}{ll}
1 & 1\\
2
\end{array}, \begin{array}{ll}
2 & 3
\end{array} )$              &   $\put(60,-11){\yng(2,1)}\put(52,-10){0}\put(80,-10){0}\put(52,4){1}\put(88,4){1}
\put(135,-3){\yng(2)}\put(125,-2){-1}\put(165,-2){-1}\smallskip$                                         \\
$(\begin{array}{ll}
1 & 2\\
2
\end{array}, \begin{array}{ll}
1 & 3
\end{array} )$              &    $\put(60,-8){\yng(2,1)}\put(52,-7){0}\put(80,-7){0}\put(52,7){1}\put(88,7){0}
\put(135,-3){\yng(2)}\put(125,-2){-1}\put(165,-2){-1}\smallskip$                                          \\
$(\begin{array}{ll}
1 & 1\\
3
\end{array}, \begin{array}{ll}
2 & 2
\end{array} )$              &    $\put(60,-3){\yng(3)}\put(52,-2){1} \put(101,-2){1}  \put(135,-3){\yng(2)}\put(125,-2){-2}\put(163,-2){-2}\smallskip$\\
$(\begin{array}{ll}
1 & 2\\
3
\end{array}, \begin{array}{ll}
1 & 2
\end{array} )$              &    $\put(60,-8){\yng(2,1)}\put(52,-7){0}\put(80,-7){0}\put(52,7){1}\put(88,7){1}
\put(135,-3){\yng(2)}\put(125,-2){-2}\put(165,-2){-2}\smallskip$\\
$(\begin{array}{ll}
2 & 2\\
3
\end{array}, \begin{array}{ll}
1 & 1
\end{array} )$              &   $\put(60,-8){\yng(2,1)}\put(52,-7){0}\put(80,-7){0}\put(52,7){1}\put(88,7){0}
\put(135,-3){\yng(2)}\put(125,-2){-2}\put(165,-2){-2}\smallskip$\\
$(\begin{array}{ll}
1 & 3\\
2
\end{array}, \begin{array}{ll}
1 & 2
\end{array} )$              &    $\put(60,-3){\yng(3)}\put(52,-2){1} \put(101,-2){0}  \put(135,-3){\yng(2)}\put(125,-2){-2}\put(163,-2){-2}\smallskip$\\
\hline
\end{tabu}
\medskip

\noindent One can easily check that $\mathrm{QM}(\mu,\boldsymbol{\lambda})$  consists of $(\nu,J_{+})$ for the $(\nu,J)$ listed in the above table.



\begin{thebibliography}{[HKOTT]}






\bibitem[B]{B} L.M. Butler; Subgroup Lattices and Symmetric Functions, Memoirs of the American Mathematical Society, Number 539, Amer. Math. Soc., Providence, Rhode Island.


\bibitem[DS]{DS} L. Deka and A. Schilling, New fermionic formula for unrestricted Kostka polynomials, Journal of Combinatorial Theory, Series A \textbf{113} (2006) 1435¨C1461.


\bibitem[F]{F}S. Fomin; Knuth Equivalence, Jeu de Taquin, and the Littlewood-Richardson Rule, prelimilary version of Appendix 1 to Chapter 7 in: R.P. Stanley, Enumerative Combinatorics, \textbf{2}, Cambridge University Press

\bibitem[HK]{HK} J. Hong, S.-J. Kang; Introduction to quantumn groups and crystal bases, Graduate Studies in Math., 42, Amer. Math. Soc., Providence, RI, 2002. xviii+307 pp.


\bibitem[HKOTT]{HKOTT} G. Hatayama, A. Kuniba, M. Okado, T. takagi and Z. Tsuboi, Paths, crystals and fermionic formula, MathPhys Odyssey 2001, 205-272, Prog. Math. Phys. \textbf{23}, Birkh\"{a}user Boston, Boston, MA, 2002.

\bibitem[HKOTY]{HKOTY} G. Hatayama, A. Kuniba, M. Okado, T. takagi and Y. Yamada, Remarks on fermionic formula, Contemporary Math. \textbf{248} (1999) 243-291.

\bibitem[Kac]{Kac} V.G. Kac, Infinite-dimensional Lie algebras, 3rd ed., Cambridge University Press, Cambridge, 1990.

\bibitem[Kas1]{Ka1} M. Kashiwara; On crystal bases of the $q$-analogue of universal enveloping algebras, Duke Math.,  \textbf{63}, (1991) 456-516.

\bibitem[Kas2]{Ka2} M. Kashiwara; The crystal base and Littelmann's refined Demazure character formula,  Duke Math.,\textbf{ 71}, (1993), 839-858.





\bibitem[KKR]{KKR} S.V. Kerov, A.N. Kirillov, N.Y. Reshetikhin, Combinatorics, the Bethe Ansatz and representations of the symmetric
group, J. Soviet Math. \textbf{41} (2) (1988) 916¨C924.

\bibitem[KOSTY]{KOSTY}  A. Kuniba, M. Okado, R. Sakamoto, T. Takagi and  Y. Yamada, Crystal interpretation of Kerov-Kirillov-
Reshetikhin bijection, Nuclear Phys. B \textbf{740} (2006), 299-327



\bibitem[KR1]{KR} A.N. Kirillov and N. Yu. Reshetikin; The Bethe ansatz and the combinatorics of Young tableaux, UDC 536.7+519.12

\bibitem[KR2]{KR2} A.N. Kirillov and N. Yu. Reshetikin; Representations of Yangians and multiplicities of the inclusions of the irreducible components of the tensor product of representations of simple Lie algebras, Zap. Nauchn. sem. Leningrad. Otdel. Mat. Inst. Steklov. (LOMI) \textbf{160} (1987), Anal. Teor. Chisel i Teor. Funktsii. 8, 211-221, 301; translation in J. Soviet Math., \textbf{52}, No.3,  (1990), 3156-3164

\bibitem[KSS]{KR} A.N. Kirillov, A. Schilling and M. Shimozono; A bijection between Littlewood-Richardson tableaux and rigged configurations, Selecta Math., New Ser. \textbf{8} (2002) 67-135.








\bibitem[LS]{LS} S. Liu and T. Shoji; Double Kostka polynomials and Hall bimodule, arXiv:1501.05996v1 [math.RT] 24 Jan 2015.




\bibitem[LSc]{LSc} A. Lascoux, M-P. Sch\"{u}tzenberger, Sur une Conjecture de H.O. Foulkes, C.R. Acad. Sci. Pairs S\'{e}r. A-B \textbf{286} (1987) A323-A324









\bibitem[M]{M} I.G. Macdonald; Symmetric Functions and Hall Polynomials, Second Edition, Claredon Press, Oxford, 1995. 







\bibitem[NY]{NY} A. Nakayashiki and Y. Yamada; Kostka polynomials and energy functions in solvable lattice models, Selecta Math., New ser. \textbf{3} (1997)547-599. 


\bibitem[O]{O1} M. Okado; $X=M$ conjecture, Combinatorial Aspect of Integrable Systems, Math. Soc. of Japan, \textbf{17},  (2007) 43-73. 


\bibitem[Sa]{SA} R. Sakamoto; Rigged Configurations and Kashiwara Operators, SIGMA 10(2014), 028, 88 pages.

\bibitem[Sc1]{Sc} A. Schilling; Crystal structure on rigged configurations, Int. Math. Res. Not. \textbf{2006} (2006), Art. ID 97376, 27 pages


\bibitem[Sc2]{Sc1} A. Schilling; $X=M$ conjecture: Fermionic formulas and rigged configurations under review, Combinatorial Aspect of Integrable Systems, \textbf{17}, Math. Soc. of Japan, (2007) 75-104.

\bibitem[Sh1] {Sh1} T. Shoji; Green functions associated to complex reflection groups, J. Algebra, \textbf{245} (2001), 650-694.

\bibitem[Sh2] {Sh2} T. Shoji; Green functions attached to limit symbols, Representation theorey of algebraic groups and quantumn groups, Adv. Stu. Pure Math. \textbf{40}, Math. Soc. Japan, Tokyo (2004), 443-467.



\par\vspace{1cm}
\noindent
S. Liu \\
Department of Mathematics, Tongji University \\
1239 Siping Road, Shanghai 200092, P.R. China  \\
E-mail: \verb|liushiyuantj@sina.com|












\end{thebibliography}
\end{document}